\documentclass[final]{lms}
\pdfoutput=1
\usepackage{amsmath}
\usepackage{amsfonts}
\usepackage{amssymb}
\usepackage{graphicx}

\newtheorem{theorem}{Theorem}[section] 
\newtheorem{lemma}[theorem]{Lemma}     
\newtheorem{corollary}[theorem]{Corollary}
\newtheorem{proposition}[theorem]{Proposition}

\newnumbered{definition}[theorem]{Definition}
\newnumbered{remark}[theorem]{Remark}
\newnumbered{example}[theorem]{Example}

\title[Construction of transmutation operators]{Construction of transmutation operators and hyperbolic pseudoanalytic functions}

\author{Vladislav V. Kravchenko and Sergii M. Torba}

\journal{}

\classno{30B60, 30G20, 34A25, 35A35, 35C10 (primary), 30B10, 34L16, 34A45, 35L10, 41A50, 47N20, 65N99 (secondary)}

\extraline{Research was supported by CONACYT, Mexico.
Research of second named author was supported by DFFD, Ukraine (GP/F32/030)
and by SNSF, Switzerland (JRP IZ73Z0 of SCOPES 2009--2012).}

\providecommand{\sech}{\mathop{\rm sech}\nolimits}
\begin{document}

\maketitle

\begin{abstract}
A representation for integral kernels of Delsarte transmutation operators is
obtained in the form of a functional series with the exact formulae for the
terms of the series. It is based on the application of hyperbolic
pseudoanalytic function theory and recent results on mapping properties of the
transmutation operators.

The kernel $K_{1}$ of the transmutation operator relating $A=-\frac{d^{2}%
}{dx^{2}}+q_{1}(x)$ and $B=-\frac{d^{2}}{dx^{2}}$ results to be one of the
complex components of a bicomplex-valued hyperbolic pseudoanalytic function
satisfying a Vekua-type hyperbolic equation of a special form. The other
component of the pseudoanalytic function is the kernel of the transmutation
operator relating $C=-\frac{d^{2}}{dx^{2}}+q_{2}(x)$ and $B$ where $q_{2}$ is
obtained from $q_{1}$ by a Darboux transformation. We prove the expansion
theorem and a Runge-type theorem for this special hyperbolic Vekua equation
and using several known results from hyperbolic pseudoanalytic function theory
together with the recently discovered mapping properties of the transmutation
operators obtain the new representation for their kernels. Several examples
are given. Moreover, based on the presented results approaches for numerical
computation of the transmutation kernels and for numerical solution of spectral problems are proposed.
\end{abstract}

\section{Introduction}

The notion of a transmutation operator relating two linear differential
operators was introduced in 1938 by J. Delsarte \cite{Delsarte} and nowadays
represents a widely used tool in the theory of linear differential equations
(see, e.g., \cite{BegehrGilbert}, \cite{Carroll}, \cite{LevitanInverse},
\cite{Marchenko}, \cite{Sitnik}, \cite{Trimeche}). Very often in literature
the transmutation operators are called the transformation operators. Here we
keep the original term introduced by Delsarte and Lions
\cite{DelsarteLions1956}. It is well known that under certain regularity
conditions the transmutation operator transmuting the operator $A=-\frac
{d^{2}}{dx^{2}}+q(x)$ into $B=-\frac{d^{2}}{dx^{2}}$ is a Volterra integral
operator with good properties. Its integral kernel can be obtained as a
solution of a certain Goursat problem for the Klein-Gordon equation with a
variable coefficient. There exist very few examples of the transmutation
kernels available in a closed form (see \cite{KrT2012}).

In the present work we obtain a representation for the kernels of the
transmutation operators for regular Sturm-Liouville operators (with complex
valued coefficients) in the form of a functional series with the exact
formulae for the terms of the series. The result is based on several new
observations. We use our recent result on the construction of the kernel of
the transmutation operator corresponding to a Darboux associated
Schr\"{o}dinger operator \cite{KrT2012} to find out that a bicomplex-valued
function whose one complex component is the transmutation kernel $K_{1}(x,t)$
(for a Schr\"{o}dinger operator $\frac{d^{2}}{dx^{2}}-q_{1}(x)$, $q_{1}\in
C[-b,b]$) and the other complex component is $K_{2}(x,t)$ (for a
Schr\"{o}dinger operator $\frac{d^{2}}{dx^{2}}-q_{2}(x)$, with $q_{2}$
obtained from $q_{1}$ by a Darboux transformation) is a solution of a certain
hyperbolic Vekua equation of a special form (for the theory of elliptic Vekua
equations see \cite{Vekua} as well as \cite{Berskniga} and \cite{APFT}).

In spite of a recent progress reported in \cite{KKT}, \cite{APFT}, \cite{KRT},
\cite{KT2010JMAA} the theory of hyperbolic Vekua equations is considerably
less developed than the theory of classical (elliptic) Vekua equations. For
example, as was shown in \cite{KRT} (see also \cite{APFT}) the construction of
so-called formal powers (solutions of the Vekua equation generalizing the
usual complex powers $(z-z_{0})^{n}$) can be performed using the
definitions\ completely analogous to those introduced by L. Bers
\cite{Berskniga}. Nevertheless no expansion theorem nor a result on the
completeness of the obtained system (a Runge-type theorem) of hyperbolic
formal powers was available. In this work we apply the results from \cite{KRT}
for constructing the formal powers for the hyperbolic Vekua equation arising
in relation with the kernels $K_{1}$ and $K_{2}$ as well as the observation
from \cite{CKT} establishing that they are the result of the transmutation of
the usual powers of the hyperbolic variable. We obtain an expansion theorem
and prove a completeness result. Moreover, we obtain explicit formulae for the
expansion coefficients which leads to the functional series representation for
the kernel $K_{1}$ (and also for $K_{2}$). We give several examples of
application of this result including a numerical computation. Finally, we
propose an alternative method for approximate construction of the
transmutation kernel based on the same hyperbolic formal powers but instead of
the expansion theorem using the obtained completeness result and show that this method leads to an efficient method of solving of Sturm-Liouville spectral problems.

The paper is structured as follows. In Section \ref{SectHPFT} we introduce the hyperbolic and bicomplex numbers, the Vekua equation one of the solutions of which is the bicomplex function $K_{1}-\mathbf{j}K_{2}$. We present several properties of that Vekua equation, construct an infinite system of its solutions called formal powers, explain its relation to generalized  wave polynomials and to spectral parameter power series (SPPS) representation for solutions of Sturm-Liouville equations. We consider a Goursat problem for the hyperbolic Cauchy-Riemann system and obtain certain related results important for what follows. In Section \ref{SectTO} we recall some long known facts about the transmutation operators and give some recent and new results concerning the relation between $K_{1}$ and $K_{2}$, the commutation and the mapping properties of the corresponding transmutation operators. In Section \ref{SectGGTO} we introduce an operator transforming the boundary data of the Goursat problem for the wave equation into the boundary data of the Goursat problem for the operator $\square-q(x)$. Using its properties we obtain a result about a functional series representation for solutions of the equation $\left(  \square-q\right)  u=0$ in terms of generalized wave polynomials. In Section \ref{SectTransmutVekua} we introduce a transmutation operator relating the hyperbolic Cauchy-Riemann operator with the interesting for us hyperbolic Vekua operator. The main observation here is that this transmutation operator possessing important boundedness properties
maps usual powers of the hyperbolic variable $z$ into the formal powers as
well as the usual derivatives with respect to $z$ into the generalized
derivatives in the sense of L. Bers. Here we mainly use the ideas from
\cite{CKM AACA} where similar observations were made in the elliptic setting.
Section \ref{SectETh} is dedicated to the expansion and the Runge-type theorems corresponding to the special Vekua equation under consideration. We show that
a generalized Taylor formula for its solutions is valid under certain
conditions on their Goursat data. The formula involves the formal powers
constructed and studied in preceding sections. The obtained Runge-type theorem
establishes that even when the conditions required for the validity of the
Taylor expansion are not fulfilled any solution of the studied hyperbolic
Vekua equation can be approximated arbitrarily closely by a formal polynomial.
In Section \ref{SectKernelAsSolVekua} we show that $K_{1}$ and $K_{2}$ are necessarily the conjugate components of a solution of the Vekua equation. Using this we give explicit formulae expressing $K_{2}$ in terms of $K_{1}$ and vice versa. Section \ref{SectFIK} contains the main result of this work -- the representation of the transmutation kernel in terms of a generalized Taylor series in formal powers. For the corresponding expansion coefficients we obtain both a recurrent and a direct exact formulae. Several examples are given illustrating their application. In Section \ref{SectApproxKernel} we show how the obtained results can be used for numerical computation of the transmutation kernels. Here together with the approach based on the generalized Taylor expansion theorem we propose a method based on the Runge-type theorem. When sufficiently many derivatives
of the potential taken in the origin are not available this second method is
more convenient. A developed numerical example illustrates its excellent
computational performance. Finally, in Section \ref{SectASL} we show that the developed theory can be applied to efficient numerical solution of Sturm-Liouville spectral problems and allows one to obtain thousands of eigenvalues whose errors are essentially of the same order.

\section{Elements of hyperbolic pseudoanalytic function theory}\label{SectHPFT}

\subsection{Hyperbolic and bicomplex numbers}

By $\mathbf{j}$ we denote the hyperbolic imaginary unit: $\mathbf{j}^{2}=1$
and consider the algebra of hyperbolic numbers, also called duplex numbers
(see, e.g., \cite{Sobczyk})
\[
\mathbb{D}:=\big\{c=a+b\mathbf{j}\ :\ \mathbf{j}^{2}=1,\ a,b\in\mathbb{R}%
\big\}\cong\mathrm{Cl}_{\mathbb{R}}(0,1).
\]
The algebra $\mathbb{D}$ is commutative and contains zero divisors. Additional
information on the hyperbolic numbers can be found in \cite{Lavrentyev and
Shabat}, \cite{MotterRosa} and \cite{Sobczyk}.

We will consider the variable $z=x+t\mathbf{j}$ where $x$ and $t$ are real
variables and the corresponding differential operators
\[
\partial_{z}=\frac{1}{2}\left(  {\partial_{x}+\mathbf{j}\partial_{t}}\right)
\mbox{ and }\partial_{\bar{z}}=\frac{1}{2}\left(  {\partial_{x}-\mathbf{j}%
\partial_{t}}\right)  .
\]
As in the case of complex numbers, we have $\partial_{\bar{z}}z=0$ which
explains the choice of the minus sign in the definition of $\partial_{\bar{z}%
}$.

Let $\mathbb{B}$ denote the algebra of bicomplex numbers which can be defined
as follows
\[
\mathbb{B}:=\big\{w=u+v\mathbf{j}\ :\ u,v\in\mathbb{C}\big\}
\]
and the complex imaginary unit $\mathbf{i}$ commutes with $\mathbf{j}$. More
on bicomplex numbers can be found, e.g., in \cite{CKr2012}, \cite{CastaKr2005}%
, \cite{APFT} and \cite{RochonTrembl}.

The conjugate of a bicomplex number $w$ with respect to $\mathbf{j}$ we denote
by $\overline{w}$, i.e., $\overline{w}=u-v\mathbf{j}$. The corresponding
conjugation operator is denoted by $C$, $Cw=\overline{w}$.

$\mathbb{C}$-valued functions will be also called scalar. For the scalar
components of $w$ we introduce the notations
\[
\mathcal{R}(w)=u=\frac{1}{2}(w+\overline{w})\quad\text{and\quad}%
\mathcal{I}(w)=v=\frac{1}{2\mathbf{j}}(w-\overline{w}).
\]
In order not to overload the text by excessively many different notations we
will use the notation $\mathcal{R}$ and $\mathcal{I}$ also in the operational
sense as projection operators projecting $w\in\mathbb{B}$ onto the respective
scalar components, $\mathcal{R=}\frac{1}{2}(I+C)$ and $\mathcal{I}=\frac
{1}{2\mathbf{j}}(I-C)$ where $I$ is the identity operator.

It is convenient to introduce the pair of idempotents $P^{+}=\frac{1}%
{2}(1+\mathbf{j})$ and $P^{-}=\frac{1}{2}(1-\mathbf{j})$ satisfying $\left(
P^{\pm}\right)  ^{2}=P^{\pm}$ and $P^{+}P^{-}=P^{-}P^{+}=0$. Then for any
$w\in\mathbb{B}$ there exist the unique numbers $w^{+}$, $w^{-}\in\mathbb{C}$
such that $w=P^{+}w^{+}+P^{-}w^{-}$ which are related with the components of
$w$ in the following way
\begin{equation}
w^{\pm}=\mathcal{R}\left(  w\right)  \pm\mathcal{I}(w). \label{W+-}%
\end{equation}
A nonzero element $w\in\mathbb{B}\ $belongs to the set of zero divisors
$\sigma(\mathbb{B)}$ iff $w=P^{+}w^{+}$ or $w=P^{-}w^{-}$.

We will use the following norm (see \cite{CKr2012}) in $\mathbb{B}$,
\begin{equation}
\left\vert w\right\vert =\frac{1}{2}\left(  \left\vert w^{+}\right\vert
_{\mathbb{C}}+\left\vert w^{-}\right\vert _{\mathbb{C}}\right)  ,
\label{bicom_norm}%
\end{equation}
where $\left\vert \cdot\right\vert _{\mathbb{C}}$ is the usual norm in
$\mathbb{C}$.

\subsection{Hyperbolic Vekua equation}

We will consider functions $w:\mathbb{D\rightarrow B}$ and Vekua-type
equations of the form%
\begin{equation}
\partial_{\bar{z}}w=aw+b\overline{w}\label{Vekua hyper}%
\end{equation}
where $a$ and $b$ are bicomplex functions of the variable $z\in\mathbb{D}$. In
\cite{KKT}, \cite{APFT}, \cite{KRT} it was shown that many results from
pseudoanalytic function theory \cite{Berskniga} remain valid in the hyperbolic
situation. We will not give here definitions and properties corresponding to
the general Vekua equation (\ref{Vekua hyper}) referring the reader to
\cite{APFT}. Instead, as we are interested in a very special form of the
coefficients $a$ and $b$ we restrict ourselves to that particular case and
obtain several results, such as the expansion theorem and a Runge-type theorem
which are still unavailable in a general situation. Thus, let us consider the
following hyperbolic Vekua equation
\begin{equation}
\partial_{\bar{z}}W-\frac{\partial_{\bar{z}}f}{f}\overline{W}%
=0\label{Vekua main hyper}%
\end{equation}
where $f$ is a scalar, non-vanishing function in the domain of interest. Vekua
equations of this form, i.e., with $a\equiv0$ and $b$ being a logarithmic
derivative of a scalar function are called Vekua equations of the main type or
main Vekua equations. They arise from the factorization of the stationary
Schr\"{o}dinger operator (in the elliptic case) \cite{Krpseudoan}, \cite{APFT}
and of the variable mass Klein-Gordon operator (in the hyperbolic case)
\cite{KRT}, \cite{APFT}. Moreover, for the purposes of the present paper it is
sufficient to consider the case when $f$ depends only on $x$ and hence
(\ref{Vekua main hyper}) can be written also in the form
\begin{equation}
\partial_{\bar{z}}W-\frac{f^{\prime}(x)}{2f(x)}\overline{W}%
=0.\label{Vekua main x}%
\end{equation}
It is equivalent to the system
\begin{equation}
f{\partial_{x}}\left(  \frac{1}{f}u\right)  ={\partial_{t}v,}\label{Vek1}%
\end{equation}%
\begin{equation}
\frac{1}{f}{\partial_{x}}\left(  fv\right)  ={\partial_{t}u,}\label{Vek2}%
\end{equation}
where $u$, $v$: $\mathbb{R}^{2}\rightarrow\mathbb{C}$ and $W=u+v\mathbf{j}$.
Introducing the notation $\varphi=u/f$ and $\psi=fv$ one can rewrite this
system also in the following form%
\begin{equation}
{\partial_{t}}\varphi=\frac{1}{f^{2}}{\partial_{x}}\psi\quad\text{and\quad
}{\partial_{x}}\varphi=\frac{1}{f^{2}}{\partial_{t}}\psi.\label{hyp p analyt}%
\end{equation}
In the case when all the involved functions were real valued this system was
studied in \cite{PA1983}, \cite{PA1983 2} and later on in \cite{KRT},
\cite{APFT}.

Equation (\ref{Vekua main hyper}) admits a corresponding generating pair $F=f$
and $G=\mathbf{j}/f$. We recall that a generating pair corresponding to a
Vekua equation is a pair of its solutions $F$ and $G$, independent in the
sense that an arbitrary function $\omega$, in this case $\mathbb{B}$-valued,
can be represented in the form
\begin{equation}
\omega=\varphi F+\psi G \label{omega}%
\end{equation}
where $\varphi$ and $\psi$ are scalar functions. That is, $F$ and $G$ should
satisfy the condition $\mathcal{I}(\overline{F}G)\neq0$ everywhere in the
domain of interest. In relation with the generating pair $(F,G)=(f,\mathbf{j}%
/f)$ observe that the scalar functions $\varphi$ and $\psi$ satisfy
(\ref{hyp p analyt}) iff the $\mathbb{B}$-valued function $\omega=\varphi
f+\psi\mathbf{j}/f$ is a solution of (\ref{Vekua main hyper}).

The knowledge of a generating pair allows one to define the $(F,G)$-derivative
in the sense of L. Bers. For a function $\omega$ written in the form
(\ref{omega}) its $(F,G)$-derivative has the form
\[
\overset{\circ}{\omega}:=\frac{d_{(F,G)}\omega}{dz} := \left(  \partial
_{z}\varphi\right)  F+\left(  \partial_{z}\psi\right)  G.
\]
Whenever $W$ satisfies (\ref{Vekua main hyper}) its $(F,G)$-derivative (with
$(F,G)=(f,\mathbf{j}/f)$ ) is a solution of the Vekua equation \cite{APFT}
\begin{equation}
\partial_{\bar{z}}w+\frac{\partial_{z}f}{f}\overline{w}=0.
\label{Vekua successor}%
\end{equation}
Notice that for $f=f(x)$ this equation can be written as follows
\[
\partial_{\bar{z}}w+\frac{f^{\prime}(x)}{2f(x)}\overline{w}=0
\]
or%
\begin{equation}
\partial_{\bar{z}}w-\frac{\left(  f^{-1}(x)\right)  ^{\prime}}{2f^{-1}%
(x)}\overline{w}=0. \label{Vekua succ}%
\end{equation}
Thus, the coefficient in the equation has the same structure as in
(\ref{Vekua main x}) but the role of $f$ is played by $1/f$. Hence a
generating pair for (\ref{Vekua succ}) can be chosen as $(F_{1},G_{1}%
)=(1/f,f\mathbf{j})$. The second derivative of a solution of
(\ref{Vekua main x}) can be defined in the form $\overset{\circ\circ}{W}$
$=\overset{\circ}{w}$ where $w=\overset{\circ}{W}=\varphi_{1}F_{1}+\psi
_{1}G_{1}=\varphi_{1}/f+\psi_{1}f\mathbf{j}$, $\varphi_{1}$ and $\psi_{1}$ are
scalar functions, and $\overset{\circ}{w}=\left(  \partial_{z}\varphi
_{1}\right)  /f+\left(  \partial_{z}\psi_{1}\right)  f\mathbf{j}$. Obviously,
the next generating pair can be chosen again as $(F_{2},G_{2})=(f,\mathbf{j}%
/f)$ $\ $and in this way the derivative of any order in the sense of Bers can
be defined. The derivative of $n$-th order we denote as $W^{[n]}$. In the
theory of pseudoanalytic functions such a sequence of generating pairs is
known as a periodic generating sequence (of the period two). We have
$(F_{n},G_{n})=(f,\mathbf{j}/f)$ when $n$ is even, and $(F_{n},G_{n}%
)=(1/f,f\mathbf{j})$ when $n$ is odd.

For what follows it is important to observe that if $W$ is a solution of
(\ref{Vekua main x}), its $n$-th derivative in the sense of Bers (if exists)
has an especially simple form
\begin{equation}
W^{[n]}=\mathbf{j}^{n}{\partial_{t}^{n}W.} \label{nth Der}%
\end{equation}
Indeed, for $W=\varphi f+\psi\mathbf{j}/f$ we have $\overset{\circ}{W}=\left(
\partial_{z}\varphi\right)  f+\left(  \partial_{z}\psi\right)  \mathbf{j}/f$
where $\varphi$ and $\psi$ are solutions of (\ref{hyp p analyt}). Consider
\[
\partial_{z}\varphi=\frac{1}{2}\left(  {\partial_{x}\varphi+\mathbf{j}%
\partial_{t}}\varphi\right)  =\frac{1}{2}\left(  \frac{1}{f^{2}}{\partial_{t}%
}\psi{+\mathbf{j}\partial_{t}}\varphi\right)  =\frac{{\mathbf{j}}}%
{2f}{\partial_{t}}\left(  \varphi f+\psi\frac{{\mathbf{j}}}{f}\right)
=\frac{{\mathbf{j}}}{2f}{\partial_{t}W}%
\]
and similarly,
\[
\partial_{z}\psi=\frac{1}{2}\left(  {\partial_{x}\psi+\mathbf{j}\partial_{t}%
}\psi\right)  =\frac{1}{2}\left(  f^{2}{\partial_{t}}\varphi{+\mathbf{j}%
\partial_{t}}\psi\right)  =\frac{f}{2}{\partial_{t}}\left(  \varphi
f+\psi\frac{{\mathbf{j}}}{f}\right)  =\frac{f}{2}{\partial_{t}W.}%
\]
Thus, $\overset{\circ}{W}=\mathbf{j}{\partial_{t}W}$. As this reasoning does
not change if $f$ is substituted everywhere by $1/f$ we obtain that
$\overset{\circ\circ}{W}$ $=\mathbf{j}{\partial_{t}}\overset{\circ}{{W}%
}=\mathbf{j}^{2}{\partial_{t}^{2}W}$ and hence (\ref{nth Der}) is valid both
for odd and for even values of $n$.

Below we extend some of the results on the relationship between hyperbolic
pseudoanalytic functions and solutions of the Klein-Gordon equations to the
bicomplex case. Since all the proofs are essentially the same, we refer the
reader to \cite[Chapter 13]{APFT}, \cite{KRT} for details. The operator
$\partial_{\bar{z}}$ applied to a scalar function $\varphi$ can be regarded as
a kind of gradient. If $\partial_{\bar{z}}\varphi=\Phi$, where $\Phi$ is a
$\mathbb{B}$-valued function defined on a simply connected domain $\Omega$
with $\Phi_{1}=\mathcal{R}(\Phi)$ and $\Phi_{2}=\mathcal{I}(\Phi)$ such that
\[
\partial_{t}\Phi_{1}+\partial_{x}\Phi_{2}=0,\quad\forall\,(x,t)\in\Omega,
\]
then we can construct $\varphi$ up to an arbitrary complex constant $c$.
Indeed, let $\Gamma\subset\Omega$ be a rectifiable curve leading from
$(x_{0},t_{0})$ to $(x,t)$, then the integral
\begin{equation}
\overline{A}_{h}[\Phi](x,t):=2\left(  \int_{\Gamma}\Phi_{1}\,dx-\Phi
_{2}\,dt\right)  \label{ABarGamma}%
\end{equation}
is path-independent, and all $\mathbb{C}$-valued solutions $\varphi$ of the
equation $\partial_{\bar{z}}\varphi=\Phi$ in $\Omega$ have the form
$\varphi(x,t)=\overline{A}_{h}[\Phi(x,t)]+c$ where $c$ is an arbitrary complex
constant. In other words the operator $\overline{A}_{h}$ denotes the well
known operation for reconstructing the potential function from its gradient.
In Section \ref{SectKernelAsSolVekua} we need a particular case of
\eqref{ABarGamma} when $\Gamma$ is the path consisting from 2 segments, first
going from $(x_{0},t_{0})$ to $(x,t_{0})$ and second going to $(x,t)$.
Assuming that this path belongs to the domain of interest $\Omega$, in such
case formula \eqref{ABarGamma} reads as follows
\begin{equation}
\overline{A}_{h}[\Phi](x,t)=2\left(  \int_{x_{0}}^{x}\Phi_{1}(\eta
,t_{0})\,d\eta-\int_{t_{0}}^{t}\Phi_{2}(x,\xi)\,d\xi\right)  .
\label{ABarKernel}%
\end{equation}

The following theorems explain relationship between the main Vekua equation
and variable mass Klein-Gordon equations. We use the notation $\Box
:=\partial_{x}^{2}-\partial_{t}^{2}$.

\begin{theorem}
[\cite{KRT}]Let $f=f(x)$ be a non-vanishing scalar function and $W$ be a
solution of \eqref{Vekua main x}. Then $u=\mathcal{R}(W)$ is a solution of the
equation
\begin{equation}
\label{KG1}\bigl(\Box-q_{1}(x)\bigr)u=0,\quad\text{where}\quad q_{1} =
\frac{f^{\prime\prime}}{f},
\end{equation}
and $v = \mathcal{I}(W)$ is a solution of the equation
\begin{equation}
\label{KG2}\bigl(\Box-q_{2}(x)\bigr)v=0,\quad\text{where}\quad q_{2} =
\frac{(f^{-1})^{\prime\prime}}{f^{-1}} = 2\Bigl(\frac{f^{\prime}}{f}%
\Bigr)^{2}-q_{1}.
\end{equation}
\end{theorem}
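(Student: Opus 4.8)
The plan is to reduce the first-order Vekua system to two uncoupled second-order equations by elimination, exploiting that $f$ depends on $x$ only. Write $W = u + v\mathbf{j}$. As recorded just above, equation \eqref{Vekua main x} is equivalent to the system \eqref{Vek1}--\eqref{Vek2}; expanding the $x$-derivatives and dividing out the factors of $f$ this reads
\[
u_{x} - \frac{f'}{f}\,u = \partial_{t}v, \qquad v_{x} + \frac{f'}{f}\,v = \partial_{t}u .
\]
(Alternatively one may pass to $\varphi = u/f$, $\psi = fv$ and use the symmetric form \eqref{hyp p analyt}; the two normalizations lead to the same computation.)

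First I would obtain the equation for $u$. Differentiating the first identity in $x$ gives $u_{xx} - (f'/f)'u - (f'/f)u_{x} = \partial_{x}\partial_{t}v$, while from the second identity $\partial_{x}\partial_{t}v = \partial_{t}\partial_{x}v = \partial_{t}\bigl(\partial_{t}u - (f'/f)v\bigr) = u_{tt} - (f'/f)\partial_{t}v = u_{tt} - (f'/f)\bigl(u_{x} - (f'/f)u\bigr)$, where $\partial_{t}(f'/f) = 0$ was used. Substituting and cancelling the common term $(f'/f)u_{x}$ leaves
\[
u_{xx} - u_{tt} = \Bigl[\bigl(\tfrac{f'}{f}\bigr)' + \bigl(\tfrac{f'}{f}\bigr)^{2}\Bigr]u = \frac{f''}{f}\,u ,
\]
that is, $(\Box - q_{1})u = 0$ with $q_{1} = f''/f$, since $(f'/f)' + (f'/f)^{2} = f''/f$.

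The equation for $v$ is obtained by the mirror-image computation: differentiate the second identity in $x$, eliminate $\partial_{x}\partial_{t}u$ via the first identity, cancel the term $(f'/f)v_{x}$, and one is left with $v_{xx} - v_{tt} = \bigl[(f'/f)^{2} - (f'/f)'\bigr]v$. Here $(f'/f)^{2} - (f'/f)' = 2(f'/f)^{2} - f''/f = q_{2}$, so $(\Box - q_{2})v = 0$; the alternative form $q_{2} = (f^{-1})''/f^{-1}$ is a one-line check from $(f^{-1})' = -f'/f^{2}$. There is no real obstacle in this argument --- the only care needed is the bookkeeping of which first-order terms cancel, and the observation that since the complex imaginary unit $\mathbf{i}$ commutes with $\mathbf{j}$ the scalar (that is, $\mathbb{C}$-valued) nature of the coefficient $f'/f$ means the bicomplex case is formally identical to the real-valued one treated in \cite{KRT}, \cite[Chapter 13]{APFT}; the components $u$ and $v$ of $W$ decouple exactly as above.
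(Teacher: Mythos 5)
Your proof is correct: the component system \eqref{Vek1}--\eqref{Vek2} is the right starting point, the cross-differentiation and cancellation of the first-order terms are carried out accurately, and the identities $(f'/f)'+(f'/f)^2=f''/f$ and $(f'/f)^2-(f'/f)'=2(f'/f)^2-f''/f=(f^{-1})''/f^{-1}$ close the argument. The paper itself gives no proof (it defers to \cite{KRT} and \cite[Chapter 13]{APFT}, where the result follows from the factorization of $\Box-q$ through the Vekua operator), and your direct elimination --- which tacitly assumes $W\in C^2$ so that mixed partials commute, the standard regularity here --- is an equivalent and complete verification, including the observation that the bicomplex case reduces componentwise to the scalar one.
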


\begin{theorem}
[\cite{KRT}]\label{ThDarboux} Let $u$ be a scalar solution of the Klein-Gordon
equation \eqref{KG1} in a simply connected domain $\Omega$. Then a scalar
solution $v$ of the associated Klein-Gordon equation \eqref{KG2} such that
$u+\mathbf{j}v$ is a solution of \eqref{Vekua main x} in $\Omega$, can be
constructed according to the formula
\[
v = -f^{-1} \overline{A}_{h}\bigl[\mathbf{j} f^{2} \partial_{\bar{z}}(f^{-1}
u)\bigr].
\]
It is unique up to an additive term $cf^{-1}$ where $c$ is an arbitrary
complex constant.

Vice versa, given a solution $v$ of \eqref{KG2}, the corresponding solution
$u$ of \eqref{KG1} such that $u+\mathbf{j}v$ is a solution of
\eqref{Vekua main x}, has the form
\[
u = -f \overline{A}_{h}\bigl[\mathbf{j} f^{-2} \partial_{\bar{z}}(f v)\bigr],
\]
up to an additive term $cf$.
\end{theorem}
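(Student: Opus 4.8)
The plan is to verify directly that the claimed $v$ (resp.\ $u$) does what it should, breaking the verification into three parts: (i) the reconstruction operator $\overline{A}_h$ is actually applicable to the stated argument, i.e. the bracketed expression is a genuine gradient in the sense made precise just before the statement; (ii) the resulting $u+\mathbf{j}v$ solves \eqref{Vekua main x}; and (iii) the ambiguity in $v$ is exactly an additive $cf^{-1}$. Part (iii) is immediate from the properties of $\overline{A}_h$ already recorded (it reconstructs a potential up to an arbitrary complex constant $c$, and here that constant gets multiplied by the prefactor $-f^{-1}$), and by Theorem stating the $\mathcal{R}$/$\mathcal{I}$ decomposition, $v$ automatically solves \eqref{KG2} once $u+\mathbf{j}v$ solves \eqref{Vekua main x}. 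So the work is in (i) and (ii).

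For (i): write $\Phi := \mathbf{j} f^{2}\partial_{\bar z}(f^{-1}u)$, set $\Phi_1=\mathcal{R}(\Phi)$, $\Phi_2=\mathcal{I}(\Phi)$, and check the compatibility condition $\partial_t\Phi_1+\partial_x\Phi_2=0$. First I would unpack $\partial_{\bar z}(f^{-1}u)=\tfrac12(\partial_x - \mathbf{j}\partial_t)(f^{-1}u)$, so $f^{2}\partial_{\bar z}(f^{-1}u)=\tfrac12\big(f^{2}\partial_x(f^{-1}u) - \mathbf{j} f\,\partial_t u\big)$, a $\mathbb{C}$-valued ("scalar") quantity; call its real/imaginary-in-$\mathbf{i}$ parts irrelevant — what matters is that multiplying by $\mathbf{j}$ swaps the $\mathbf{j}$-components. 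Hence $\Phi_1 = \mathcal{R}(\Phi) = -\tfrac12 f\,\partial_t u$ and $\Phi_2 = \mathcal{I}(\Phi) = \tfrac12 f^{2}\partial_x(f^{-1}u)=\tfrac12 f\,\partial_x u - \tfrac12 f' u$. Then $\partial_t\Phi_1+\partial_x\Phi_2 = -\tfrac12 f\,\partial_t^2 u + \tfrac12\partial_x(f\,\partial_x u - f' u) = \tfrac12\big(f\,\partial_x^2 u - f'' u\big) - \tfrac12 f\,\partial_t^2 u = \tfrac12 f\big(\Box u - (f''/f)u\big) = \tfrac12 f\big(\Box u - q_1 u\big) = 0$ by \eqref{KG1}. (The factor conventions — the $2$ in \eqref{ABarGamma}, the $\tfrac12$ in $\partial_{\bar z}$ — need to be tracked carefully but are routine.)

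For (ii): by construction $\varphi := f^{-1}\overline{A}_h[\Phi]$ satisfies $\partial_{\bar z}\varphi_0 = \Phi$ where $\varphi_0 = \overline{A}_h[\Phi] = -f v$, i.e. $\partial_{\bar z}(-fv) = \mathbf{j} f^{2}\partial_{\bar z}(f^{-1}u)$. Now set $W = u+\mathbf{j}v$ and compute $\partial_{\bar z}W - \tfrac{f'}{2f}\overline{W}$; using $\overline{W}=u-\mathbf{j}v$ and the relation just obtained to eliminate $\partial_{\bar z} v$, one should find the expression collapses to $0$. Concretely, it is cleanest to pass to the equivalent system \eqref{Vek1}--\eqref{Vek2} (or \eqref{hyp p analyt} with $\varphi=u/f$, $\psi=fv$): the definition of $v$ is precisely a reconstruction of $\psi=fv$ from the data $\partial_t\psi = f^2\partial_x(u/f)$ and $\partial_x\psi = f^2\partial_t(u/f)$ forced by \eqref{hyp p analyt}, and the compatibility of those two prescriptions is exactly the condition checked in (i). The "vice versa" statement is proved by the identical argument with $f$ replaced by $1/f$ throughout, using that $(1/f)$ generates the successor equation and that $q_1,q_2$ swap roles under $f\leftrightarrow f^{-1}$.

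The main obstacle I anticipate is purely bookkeeping: keeping straight the two conjugations (with respect to $\mathbf{i}$ versus $\mathbf{j}$), the scalar-versus-bicomplex status of each factor, and the normalization constants in $\partial_{\bar z}$, in $\overline{A}_h$, and in the coefficient $f'/(2f)$, so that the cancellation in (ii) is genuine and not an artifact of a dropped factor of $2$. There is no deep difficulty — the whole statement is a hyperbolic/bicomplex transcription of the classical Bers result, and as the authors note the proofs "are essentially the same" as in \cite[Chapter 13]{APFT} and \cite{KRT} — so I would likely present (i) in full, indicate (ii) via the system \eqref{hyp p analyt}, and dispatch (iii) and the "vice versa" part in a sentence each.
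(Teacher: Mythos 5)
Your proof is correct: the compatibility check $\partial_t\Phi_1+\partial_x\Phi_2=\tfrac12 f(\Box u-q_1u)=0$, the identification of $\partial_{\bar z}(-fv)=\mathbf{j}f^2\partial_{\bar z}(f^{-1}u)$ with the system \eqref{Vek1}--\eqref{Vek2}, the $cf^{-1}$ ambiguity from the additive constant in $\overline{A}_h$, and the $f\leftrightarrow 1/f$ symmetry for the converse all hold with the paper's normalizations. The paper itself gives no proof, merely citing \cite{KRT} and \cite[Chapter 13]{APFT} with the remark that the bicomplex proofs are "essentially the same," and your direct verification is precisely that standard argument.
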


\subsection{Integration and formal powers}

Let $(F,G)$ be a generating pair. Its adjoint generating pair $(F,G)^{\ast
}=(F^{\ast},G^{\ast})$ is defined by the formulae
\[
F^{\ast}=-\frac{2\overline{F}}{F\overline{G}-\overline{F}G},\qquad G^{\ast}=
\frac{2\overline{G}}{F\overline{G}-\overline{F}G}.
\]
For our special case of generating pairs we have $(f, \mathbf{j}/f)^{\ast}=
(f\mathbf{j}, 1/f)$ and $(1/f, f\mathbf{j})^{\ast}= (\mathbf{j}/f, f)$.

The $(F,G)$-integral is defined as
\[
\int_{\Gamma}Wd_{(F,G)}z=\frac{1}{2}\left(  F(z_{1})\,\mathcal{R}\left(
\int_{\Gamma}G^{\ast}W\,dz\right)  +G(z_{1})\,\mathcal{R}\left(  \int_{\Gamma
}F^{\ast}W\,dz\right)  \right)
\]
where $\Gamma$ is a rectifiable curve leading from $z_{0}$ to $z_{1}$.

If $W=\varphi F+\psi G$ is a bicomplex $(F,G)$-pseudoanalytic function where
$\varphi$ and $\psi$ are scalar functions then
\[
\int_{z_{0}}^{z}\overset{\circ}{W}\,d_{(F,G)}z=W(z)-\varphi(z_{0}%
)F(z)-\psi(z_{0})G(z),
\]
and this integral is path-independent and represents the $(F,G)$%
-antiderivative of $\overset{\circ}{W}$.

\begin{definition}
[\cite{Berskniga}]\label{DefFormalPower_bi}The formal power $Z_{m}%
^{(0)}(a,z_{0};z)$ with center at $z_{0}$, coefficient $a$ and exponent $0$ is
defined as the linear combination of the generators $F_{m}$, $G_{m}$ with
scalar constant coefficients $\lambda$, $\mu$ chosen so that $\lambda
F_{m}(z_{0})+\mu G_{m}(z_{0})=a$. The formal powers with exponents
$n=0,1,2,\ldots$ are defined by the recursion formula
\[
Z_{m}^{(n+1)}(a,z_{0};z)=(n+1)\int_{z_{0}}^{z}Z_{m+1}^{(n)}(a,z_{0}%
;\zeta)\,d_{(F_{m},G_{m})}\zeta.
\]
\end{definition}

This definition implies the following properties.

\begin{enumerate}
\item $Z_{m}^{(n)}(a,z_{0};z)$ is an $(F_{m},G_{m})$-bicomplex pseudoanalytic
function of $z$, i.e., it satisfies the hyperbolic Vekua equation determined
by $(F_{m},G_{m})$ (see \cite{APFT}).

\item If $a^{\prime}$ and $a^{\prime\prime}$ are scalar constants, then
\[
Z_{m}^{(n)}(a^{\prime}+\mathbf{j}a^{\prime\prime},z_{0};z)=a^{\prime}%
Z_{m}^{(n)}(1,z_{0};z)+a^{\prime\prime}Z_{m}^{(n)}(\mathbf{j},z_{0};z).
\]

\item The formal powers satisfy the differential relations
\[
\frac{d_{(F_{m},G_{m})}Z_{m}^{(n)}(a,z_{0};z)}{dz}=nZ_{m+1}^{(n-1)}%
(a,z_{0};z).
\]

\item The asymptotic formulae
\[
Z_{m}^{(n)}(a,z_{0};z)\sim a(z-z_{0})^{n},\quad z\rightarrow z_{0}%
\]
hold.
\end{enumerate}

In the following we denote $Z_{0}^{(n)}(a,z_{0};z)$ by $Z^{(n)}(a,z_{0};z)$.

Since we are primarily interested in the hyperbolic Vekua equation
(\ref{Vekua main x}), observe that as was explained above, a corresponding
generating sequence is periodic having the form $(F_{n},G_{n})=(f,\mathbf{j}%
/f)$ when $n$ is even, and $(F_{n},G_{n})=(1/f,f\mathbf{j})$ when $n$ is odd.
Using the construction from \cite{Berskniga} given there for an analogous
elliptic situation (see also \cite[Sect. 4.2]{APFT}) we obtain that the
corresponding formal powers admit the following elegant representation. We
consider the formal powers with the center in the origin and for simplicity
assume that $f(0)=1$. Define the following systems of functions
\begin{align}
X^{(0)}(x) &  \equiv\widetilde{X}^{(0)}(x)\equiv1,\label{X1}\\
X^{(n)}(x) &  =n\int_{0}^{x}X^{(n-1)}(s)\left(  f^{2}(s)\right)  ^{(-1)^{n}%
}\,\mathrm{d}s,\label{X2}\\
\widetilde{X}^{(n)}(x) &  =n\int_{0}^{x}\widetilde{X}^{(n-1)}(s)\left(
f^{2}(s)\right)  ^{(-1)^{n-1}}\,\mathrm{d}s.\label{X3}%
\end{align}
Then the formal powers corresponding to \eqref{Vekua main x} have the
following form. For $\alpha=\alpha^{\prime}+\mathbf{j}\alpha^{\prime\prime}$
we have
\begin{equation}
Z^{(n)}(\alpha,0;z)=f(x)\mathcal{R}\bigl(\vphantom{Z^{(n)}}_{\ast}%
Z^{(n)}(\alpha,0;z)\bigr)+\frac{\mathbf{j}}{f(x)}\mathcal{I}%
\bigl(\vphantom{Z^{(n)}}_{\ast}Z^{(n)}(\alpha,0;z)\bigr),\label{Zn}%
\end{equation}
where
\begin{equation}
_{\ast}Z^{(n)}(\alpha,0;z)=\alpha^{\prime}\sum_{k=0}^{n}\binom{n}{k}%
X^{(n-k)}\mathbf{j}^{k}t^{k}+\mathbf{j}\alpha^{\prime\prime}\sum_{k=0}%
^{n}\binom{n}{k}\widetilde{X}^{(n-k)}\mathbf{j}^{k}t^{k}\qquad\text{for an odd
}n\label{Znodd}%
\end{equation}
and
\begin{equation}
_{\ast}Z^{(n)}(\alpha,0;z)=\alpha^{\prime}\sum_{k=0}^{n}\binom{n}{k}%
\widetilde{X}^{(n-k)}\mathbf{j}^{k}t^{k}+\mathbf{j}\alpha^{\prime\prime}%
\sum_{k=0}^{n}\binom{n}{k}X^{(n-k)}\mathbf{j}^{k}t^{k}\qquad\text{for an even
}n.\label{Zneven}%
\end{equation}

For any $\alpha\in\mathbb{B}$ and $n\in\mathbb{N}_{0}=\mathbb{N}\cup\left\{
0\right\}  $ the formal power (\ref{Zn}) is a solution of (\ref{Vekua main x}).

\begin{remark}
Formulae \eqref{Zn}--\eqref{Zneven} clearly generalize the binomial
representation for the analytic powers $\alpha z^{n}$. If one chooses
$f\equiv1$ then $Z^{(n)}(\alpha,0;z)=\alpha z^{n}$.
\end{remark}

The formal powers $Z^{(n)}(\alpha,0;z)$ can be written with the use of
generalized wave polynomials \cite{KKTT}. Consider two families of functions
$\left\{  \varphi_{k}\right\}  _{k=0}^{\infty}$ and $\left\{  \psi
_{k}\right\}  _{k=0}^{\infty}$ constructed from \eqref{X1}, \eqref{X2} and
\eqref{X3} by
\begin{equation}
\varphi_{k}(x)=%
\begin{cases}
f(x)X^{(k)}(x), & k\text{\ odd},\\
f(x)\widetilde{X}^{(k)}(x), & k\text{\ even},
\end{cases}
\label{phik}%
\end{equation}
and
\begin{equation}
\psi_{k}(x)=%
\begin{cases}
\dfrac{\widetilde{X}^{(k)}(x)}{f(x)}, & k\text{\ odd,}\\
\dfrac{X^{(k)}(x)}{f(x)}, & k\text{\ even}.
\end{cases}
\label{psik}%
\end{equation}
The following functions
\begin{equation}
u_{0}=\varphi_{0}(x),\quad u_{2m-1}(x,t)=\sum_{\text{even }k=0}^{m}\binom
{m}{k}\varphi_{m-k}(x)t^{k},\quad u_{2m}(x,t)=\sum_{\text{odd }k=1}^{m}%
\binom{m}{k}\varphi_{m-k}(x)t^{k}, \label{um}%
\end{equation}
and
\begin{equation}
v_{0}=\psi_{0}(x),\quad v_{2m-1}(x,t)=\sum_{\text{even }k=0}^{m}\binom{m}%
{k}\psi_{m-k}(x)t^{k},\quad v_{2m}(x,t)=\sum_{\text{odd }k=1}^{m}\binom{m}%
{k}\psi_{m-k}(x)t^{k}, \label{vm}%
\end{equation}
are called generalized wave polynomials (the wave polynomials are introduced
below, in Remark \ref{Rem Wave polynomials}). It is easy to see that
\begin{align}
Z^{(0)}(\alpha,0;z)  &  =\alpha^{\prime}u_{0}(x,t)+\mathbf{j}\alpha
^{\prime\prime}v_{0}(x,t), &  & \label{ZasWavePol0}\\
Z^{(n)}(\alpha,0;z)  &  =\alpha^{\prime}u_{2n-1}(x,t)+\alpha^{\prime\prime
}u_{2n}(x,t)+\mathbf{j}\bigl(\alpha^{\prime}v_{2n}(x,t)+\alpha^{\prime\prime
}v_{2n-1}(x,t)\bigr), &  &  n\geq1. \label{ZasWavePol}%
\end{align}
The following parity relations hold for generalized wave polynomials.
\begin{equation}
u_{0}(x,-t)=u_{0}(x,t),\qquad u_{2n-1}(x,-t)=u_{2n-1}(x,t),\qquad
u_{2n}(x,-t)=-u_{2n}(x,t). \label{umParity}%
\end{equation}


It is worth to mention the following result obtained in \cite{KrCV08} (for
additional details and simpler proof see \cite{APFT} and \cite{KrPorter2010})
which establishes the relation of the systems of functions $\left\{
\varphi_{k}\right\}  _{k=0}^{\infty}$ and $\left\{  \psi_{k}\right\}
_{k=0}^{\infty}$ to the Sturm-Liouville equation.

\begin{theorem}
[\cite{KrCV08}]\label{ThGenSolSturmLiouville} Let $q$ be a continuous complex
valued function of an independent real variable $x\in\lbrack a,b]$ and
$\lambda$ be an arbitrary complex number. Suppose there exists a solution $f$
of the equation
\begin{equation}
f^{\prime\prime}-qf=0 \label{SLhom}%
\end{equation}
on $(a,b)$ such that $f\in C^{2}(a,b)\cap C^{1}[a,b]$ and $f(x)\neq0$\ for any
$x\in\lbrack a,b]$. Then the general solution $g\in C^{2}(a,b)\cap C^{1}[a,b]$
of the equation
\[
g^{\prime\prime}-qg=\lambda g
\]
on $(a,b)$ has the form
\[
g=c_{1}g_{1}+c_{2}g_{2}%
\]
where $c_{1}$ and $c_{2}$ are arbitrary complex constants,
\begin{equation}
g_{1}=\sum_{k=0}^{\infty}\frac{\lambda^{k}}{(2k)!}\varphi_{2k}\qquad
\text{and}\qquad g_{2}=\sum_{k=0}^{\infty}\frac{\lambda^{k}}{(2k+1)!}%
\varphi_{2k+1} \label{u1u2}%
\end{equation}
and both series converge uniformly on $[a,b]$ together with the series of the
first derivatives which have the form
\begin{multline}
\displaybreak[2]g_{1}^{\prime}=f^{\prime}+\sum_{k=1}^{\infty}\frac{\lambda
^{k}}{(2k)!}\left(  \frac{f^{\prime}}{f}\varphi_{2k}+2k\,\psi_{2k-1}\right)
\qquad\text{and}\label{du1du2}\\
g_{2}^{\prime}=\sum_{k=0}^{\infty}\frac{\lambda^{k}}{(2k+1)!}\left(
\frac{f^{\prime}}{f}\varphi_{2k+1}+\left(  2k+1\right)  \psi_{2k}\right)  .
\end{multline}
The series of the second derivatives converge uniformly on any
segment\linebreak$\lbrack a_{1},b_{1}]\subset(a,b)$.
\end{theorem}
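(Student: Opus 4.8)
The plan is to reduce the second-order equation to a first-order form in which the recursive structure of $X^{(n)}$ and $\widetilde{X}^{(n)}$ becomes transparent, then to verify the two series termwise, and finally to treat convergence and linear independence separately. First I would exploit the hypothesis that $f$ is a non-vanishing solution of \eqref{SLhom}: since $f^{\prime\prime}=qf$, one checks the factorization identity
\[
\Bigl(\frac{d^{2}}{dx^{2}}-q\Bigr)u=\frac{1}{f}\Bigl(f^{2}\bigl(f^{-1}u\bigr)^{\prime}\Bigr)^{\prime},
\]
so that the substitution $g=fh$ turns $g^{\prime\prime}-qg=\lambda g$ into $(f^{2}h^{\prime})^{\prime}=\lambda f^{2}h$. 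By \eqref{phik} and \eqref{u1u2}, $g_{1},g_{2}$ are $f$ times the series $h_{1}=\sum_{k\ge0}\frac{\lambda^{k}}{(2k)!}\widetilde{X}^{(2k)}$ and $h_{2}=\sum_{k\ge0}\frac{\lambda^{k}}{(2k+1)!}X^{(2k+1)}$, so it suffices to show that $h_{1}$ and $h_{2}$ solve $(f^{2}h^{\prime})^{\prime}=\lambda f^{2}h$.

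For the termwise verification I would use only the defining recursions \eqref{X1}--\eqref{X3}: differentiating $\widetilde{X}^{(2k)}$ contributes the weight $(f^{2})^{-1}$ and lowers the index, giving $f^{2}(\widetilde{X}^{(2k)})^{\prime}=2k\,\widetilde{X}^{(2k-1)}$, and a second differentiation contributes the weight $f^{2}$, giving $\bigl(f^{2}(\widetilde{X}^{(2k)})^{\prime}\bigr)^{\prime}=2k(2k-1)\,\widetilde{X}^{(2k-2)}f^{2}$; summing and reindexing yields $(f^{2}h_{1}^{\prime})^{\prime}=\lambda f^{2}h_{1}$, and the same computation with $X$ and the roles of $f^{2}$ and $(f^{2})^{-1}$ interchanged gives $(f^{2}h_{2}^{\prime})^{\prime}=\lambda f^{2}h_{2}$. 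Re-expressing $(fh_{i})^{\prime}=f^{\prime}h_{i}+fh_{i}^{\prime}$ in terms of $\varphi_{k},\psi_{k}$ via \eqref{phik}, \eqref{psik} then produces exactly the claimed formulae \eqref{du1du2} for $g_{1}^{\prime}$ and $g_{2}^{\prime}$, and one further differentiation gives the series for the second derivatives.

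The analytic heart of the argument is the justification of termwise differentiation, which I would obtain from the elementary a priori estimate $|X^{(n)}(x)|,\;|\widetilde{X}^{(n)}(x)|\le\bigl(M\,|x-x_{0}|\bigr)^{n}$, proved by induction on $n$, where $M:=\max\bigl(\|f^{2}\|_{\infty,[a,b]},\,\|f^{-2}\|_{\infty,[a,b]}\bigr)<\infty$ because $f$ is continuous and non-vanishing on the compact interval $[a,b]$; the inductive step is immediate since the extra factor $n$ in \eqref{X2}, \eqref{X3} is absorbed by $\int_{x_{0}}^{x}|s-x_{0}|^{n-1}\,ds$. These bounds and the Weierstrass $M$-test give uniform convergence on all of $[a,b]$ of the two series for $g_{1},g_{2}$ and of the series for their first derivatives (which, grouped as in \eqref{du1du2}, are again dominated by series in the $X^{(n)},\widetilde{X}^{(n)}$ times $f$ or $f^{\prime}/f$); the second-derivative series carry in addition the factor $f^{\prime\prime}$ and hence converge uniformly only on segments $[a_{1},b_{1}]\subset(a,b)$. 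Finally, linear independence is a one-line Wronskian computation: evaluating the series at $x_{0}$ gives $g_{1}(x_{0})=f(x_{0})$, $g_{1}^{\prime}(x_{0})=f^{\prime}(x_{0})$, $g_{2}(x_{0})=0$, $g_{2}^{\prime}(x_{0})=1/f(x_{0})$, so the Wronskian $g_{1}g_{2}^{\prime}-g_{1}^{\prime}g_{2}$ at $x_{0}$ equals $1\neq0$; since $g_{1},g_{2}$ are then two linearly independent $C^{2}(a,b)\cap C^{1}[a,b]$ solutions of a second-order linear equation, the general solution is $c_{1}g_{1}+c_{2}g_{2}$. I expect the main obstacle to be the convergence step: keeping the factorials, the factors $n$, and the alternating weights $f^{\pm2}$ aligned so that the $M$-test applies uniformly up to the endpoints for $g_{1},g_{2}$ and $g_{1}^{\prime},g_{2}^{\prime}$, while cleanly isolating the loss of boundedness of $f^{\prime\prime}$ near the endpoints that forces the second-derivative series onto compact subintervals.
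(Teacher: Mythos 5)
The paper does not prove this theorem; it quotes it from \cite{KrCV08}, and your argument --- P\'olya factorization $\bigl(\tfrac{d^2}{dx^2}-q\bigr)u=\tfrac1f\bigl(f^2(f^{-1}u)'\bigr)'$, termwise verification via the recursions \eqref{X1}--\eqref{X3}, the inductive bound $|X^{(n)}|,|\widetilde X^{(n)}|\le (M|x-x_0|)^n$ with the $M$-test, and the Wronskian evaluation at the center --- is exactly the standard SPPS proof given there. All the index/parity bookkeeping in your computation checks out, so the proposal is correct and essentially coincides with the cited proof.
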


Representations \eqref{u1u2} and \eqref{du1du2}, also known as the SPPS
method, present an efficient and highly competitive technique for solving a
variety of spectral and scattering problems related to Sturm-Liouville
equations, see \cite{CKOR}, \cite{ErbeMertPeterson2012}, \cite{KKRosu},
\cite{KiraRosu2010}, \cite{KrPorter2010} and \cite{KT Obzor} and references therein.

\subsection{The Goursat problem for the hyperbolic Cauchy-Riemann system and a
Runge type theorem}

For the construction of the kernels of the transmutation operators we are
interested in solving corresponding Goursat problems. Our results concerning
the Goursat problems for equation (\ref{Vekua main x}) arising as a natural
step in that construction are based on the following simplest case.

Consider the hyperbolic Cauchy-Riemann system%
\begin{equation}
\partial_{\bar{z}}w=0\quad\Longleftrightarrow\quad%
\begin{cases}
u_{x}=v_{t}\\
u_{t}=v_{x}%
\end{cases}
\label{hCR}%
\end{equation}
studied in a considerable number of publications (see, e.g., \cite{Lavrentyev
and Shabat}, \cite{MotterRosa}, \cite{Wen}). It is easy to see that its
general solution can be written as follows%
\begin{equation}
\label{gensolhCR}%
\begin{split}
w(x,t)  &  =\frac{1}{2}\left(  \Phi\left(  \frac{x+t}{2}\right)  +\Psi\left(
\frac{x-t}{2}\right)  +\mathbf{j}\left(  \Phi\left(  \frac{x+t}{2}\right)
-\Psi\left(  \frac{x-t}{2}\right)  \right)  \right) \\
&  =P^{+}\Phi\left(  \frac{x+t}{2}\right)  +P^{-}\Psi\left(  \frac{x-t}%
{2}\right)
\end{split}
\end{equation}
where $\Phi$ and $\Psi$ are arbitrary continuously differentiable scalar functions.

Considering the values of $w$ on the characteristics $x=t$ and $x=-t$ we see
that
\begin{equation}
w(x,x)=P^{+}\Phi\left(  x\right)  +P^{-}\Psi\left(  0\right)  \quad
\text{and\quad}w(x,-x)=P^{+}\Phi\left(  0\right)  +P^{-}\Psi\left(  x\right)
. \label{condG}%
\end{equation}
Thus, we arrive at the following correct statement of the Goursat problem.
\emph{Find a solution }$w$\emph{ of \eqref{hCR} in a closed square }%
$\overline{R}$\emph{ with the vertices }$(\pm2b,0)$\emph{ and }$(0,\pm
2b)$\emph{, }$b>0$\emph{ satisfying the conditions \eqref{condG} where }$\Phi
$\emph{ and }$\Psi$\emph{ are arbitrary scalar functions such that }$\Phi
,\Psi\in C^{1}[-b,b]$\emph{. }

\begin{proposition}
\label{Prop Goursat hCR} The unique solution of the Goursat problem
\eqref{hCR}, \eqref{condG} has the form \eqref{gensolhCR}.
\end{proposition}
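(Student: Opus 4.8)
The plan is to split the claim into an existence part---that the function in \eqref{gensolhCR} does solve the Goursat problem---and a uniqueness part---that no other solution exists---and to handle both by passing to the characteristic coordinates $\xi=(x+t)/2$, $\eta=(x-t)/2$.

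First I would record that this change of variables maps the closed square $\overline{R}$ with vertices $(\pm2b,0)$, $(0,\pm2b)$ onto the coordinate square $[-b,b]\times[-b,b]$: the vertices go to the four points $(\pm b,\pm b)$, and the sides $x\pm t=\pm2b$ become the lines $\xi=\pm b$ and $\eta=\pm b$. Hence the right-hand side of \eqref{gensolhCR} is well defined on $\overline{R}$ whenever $\Phi,\Psi\in C^{1}[-b,b]$; it satisfies \eqref{hCR} (this is the general-solution formula recalled just before the statement, and can also be checked directly, since $\partial_{\bar{z}}\bigl(P^{+}\Phi((x+t)/2)\bigr)=\tfrac14\Phi'\,P^{+}(1-\mathbf{j})=0$ and $\partial_{\bar{z}}\bigl(P^{-}\Psi((x-t)/2)\bigr)=\tfrac14\Psi'\,P^{-}(1+\mathbf{j})=0$ by $\mathbf{j}^{2}=1$); and restricting it to the diagonals $x=t$ and $x=-t$ reproduces exactly the conditions \eqref{condG}. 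This settles existence.

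For uniqueness I would take an arbitrary $C^{1}$ solution $w$ of \eqref{hCR} on $\overline{R}$ satisfying \eqref{condG}, write it as $w=P^{+}w^{+}+P^{-}w^{-}$ with scalar $w^{\pm}$, and use that in the characteristic coordinates $\partial_{\bar{z}}=\tfrac12\bigl(P^{+}\partial_{\eta}+P^{-}\partial_{\xi}\bigr)$, so that \eqref{hCR} becomes $\partial_{\eta}w^{+}=0$ and $\partial_{\xi}w^{-}=0$ on $[-b,b]^{2}$. Since that rectangle is convex, $w^{+}$ depends on $\xi$ alone and $w^{-}$ on $\eta$ alone, so $w=P^{+}\widetilde{\Phi}((x+t)/2)+P^{-}\widetilde{\Psi}((x-t)/2)$ for some $\widetilde{\Phi},\widetilde{\Psi}\in C^{1}[-b,b]$. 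Evaluating on $x=t$ gives $w(x,x)=P^{+}\widetilde{\Phi}(x)+P^{-}\widetilde{\Psi}(0)$; comparison with the prescribed $P^{+}\Phi(x)+P^{-}\Psi(0)$ from \eqref{condG}, together with the uniqueness of the idempotent decomposition $w=P^{+}w^{+}+P^{-}w^{-}$, forces $\widetilde{\Phi}\equiv\Phi$ on $[-b,b]$; evaluating on $x=-t$ and comparing with $P^{+}\Phi(0)+P^{-}\Psi(x)$ likewise forces $\widetilde{\Psi}\equiv\Psi$. Thus $w$ coincides with \eqref{gensolhCR}.

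The whole argument is essentially bookkeeping; the one place meriting care is the step just used---making sure the characteristic change of variables both identifies $\overline{R}$ with the square $[-b,b]^{2}$ on which the Goursat data live, and legitimately upgrades ``$\partial_{\eta}w^{+}=0$ throughout $\overline{R}$'' to ``$w^{+}$ is a function of $\xi$ only,'' which is where convexity (simple connectedness) of the domain enters.
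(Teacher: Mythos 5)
Your proposal is correct and follows essentially the same route as the paper: both arguments rest on the fact that every $C^{1}$ solution of \eqref{hCR} has the representation \eqref{gensolhCR}, and existence is a direct check. The only difference is cosmetic --- the paper takes the general-solution formula as already established and deduces uniqueness by showing the homogeneous Goursat problem forces $\Phi\equiv\Psi\equiv 0$, whereas you re-derive that formula via the idempotent decomposition $w=P^{+}w^{+}+P^{-}w^{-}$ in characteristic coordinates and then match the Goursat data directly, which amounts to the same thing.
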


\begin{proof}
The fact that $w$ is a solution of (\ref{hCR}) follows from (\ref{gensolhCR}).
The conditions (\ref{condG}) are obviously satisfied. To prove the uniqueness
one considers the Goursat problem with the homogeneous conditions and finds
out that $\Phi$ and $\Psi$ from the representation (\ref{gensolhCR}) of the
general solution of (\ref{hCR}) satisfy the equations $P^{+}\Phi(x)+P^{-}%
\Psi(0)\equiv0$ and $P^{+}\Phi(0)+P^{-}\Psi(x)\equiv0$, $x\in\lbrack-b,b]$
from which both $\Phi$ and $\Psi$ are necessarily trivial.
\end{proof}

This proposition is not entirely new. As a kind of analogue of the Cauchy
integral formula for analytic functions it was discussed, e.g., in
\cite{MotterRosa} in the case $w:$ $\mathbb{D}\rightarrow\mathbb{D}$. For our
purposes it is more convenient to consider this simple representation of any
hyperbolic analytic function via its values on the characteristics from the
point of view of the Goursat problem.

The following result is a direct corollary of Proposition
\ref{Prop Goursat hCR}.

\begin{proposition}
\label{Prop Goursat polynom} Let $\Phi$ and $\Psi$ be scalar functions defined
on $[-b,b]$ and admitting there the uniformly convergent power series
expansions%
\[
\Phi\left(  x\right)  =\sum\limits_{n=0}^{\infty}\alpha_{n}x^{n}%
\quad\text{and\quad}\Psi\left(  x\right)  =\sum\limits_{n=0}^{\infty}\beta
_{n}x^{n}.
\]
Then the unique solution of the Goursat problem \eqref{hCR}, \eqref{condG} has
the form
\begin{equation}
w=\sum\limits_{n=0}^{\infty}a_{n}z^{n}, \label{power series}%
\end{equation}
where
\begin{equation}
\label{power series coeffs}a_{n}=\frac{1}{2^{n}}\left(  P^{+}\alpha_{n}%
+P^{-}\beta_{n}\right)  .
\end{equation}
\end{proposition}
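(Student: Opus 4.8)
The plan is to build the candidate solution directly from the series for $\Phi$ and $\Psi$ and then invoke the uniqueness part of Proposition~\ref{Prop Goursat hCR}. First I would substitute the power series for $\Phi$ and $\Psi$ into the explicit general solution \eqref{gensolhCR}, written in the idempotent form $w(x,t)=P^{+}\Phi((x+t)/2)+P^{-}\Psi((x-t)/2)$. This gives
\[
w(x,t)=P^{+}\sum_{n=0}^{\infty}\alpha_{n}\Bigl(\frac{x+t}{2}\Bigr)^{n}
+P^{-}\sum_{n=0}^{\infty}\beta_{n}\Bigl(\frac{x-t}{2}\Bigr)^{n}.
\]
Next I would recall that $x+t\mathbf{j}=z$ together with the idempotent identities $P^{+}z=P^{+}(x+t)$ and $P^{-}z=P^{-}(x-t)$ (since $P^{\pm}\mathbf{j}=\pm P^{\pm}$), so that $P^{+}z^{n}=P^{+}(x+t)^{n}$ and $P^{-}z^{n}=P^{-}(x-t)^{n}$. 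Pulling the factor $2^{-n}$ out and regrouping term by term yields
\[
w(x,t)=\sum_{n=0}^{\infty}\frac{1}{2^{n}}\bigl(P^{+}\alpha_{n}+P^{-}\beta_{n}\bigr)z^{n}
=\sum_{n=0}^{\infty}a_{n}z^{n},
\]
which is exactly \eqref{power series} with the coefficients \eqref{power series coeffs}.

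The routine but necessary analytic step is to justify that this rearrangement is legitimate and that the resulting series converges on $\overline{R}$: since $\sum\alpha_{n}x^{n}$ and $\sum\beta_{n}x^{n}$ converge uniformly on $[-b,b]$, the substituted series converge uniformly for $(x\pm t)/2\in[-b,b]$, which is precisely the square $\overline{R}$; termwise regrouping of two uniformly convergent series is permissible, and using the bicomplex norm \eqref{bicom_norm} one checks $|a_{n}z^{n}|\le\tfrac{1}{2}2^{-n}(|\alpha_{n}|\,|x+t|^{n}+|\beta_{n}|\,|x-t|^{n})$, so the series for $w$ is dominated by uniformly convergent scalar series. In particular $w$ is continuously differentiable on $\overline{R}$, and each partial sum — being of the form \eqref{gensolhCR} with polynomial $\Phi$, $\Psi$ — solves \eqref{hCR}; passing to the limit (the derivative series being uniformly convergent by the same argument applied to the differentiated power series, which have the same radius of convergence) shows $\partial_{\bar z}w=0$. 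Evaluating the constructed $w$ on the characteristics via \eqref{condG} reproduces the prescribed Goursat data, since $\Phi,\Psi\in C^{1}[-b,b]$ as sums of their (differentiable) power series.

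Finally, uniqueness is immediate: Proposition~\ref{Prop Goursat hCR} already asserts that the Goursat problem \eqref{hCR}, \eqref{condG} has a unique solution for $C^{1}$ data, so the $w$ just constructed is \emph{the} solution, and \eqref{power series}--\eqref{power series coeffs} is its form. I do not anticipate a genuine obstacle here; the only point requiring minor care is the interchange of summation with the idempotent decomposition and the verification of uniform convergence on $\overline{R}$ rather than merely on $[-b,b]$, but both follow directly from the hypothesis of uniform convergence of the given scalar series together with the elementary identity $P^{\pm}z^{n}=P^{\pm}(x\pm t)^{n}$.
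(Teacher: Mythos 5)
Your proposal is correct and follows essentially the same route as the paper: substitute the power series into the idempotent form of the general solution \eqref{gensolhCR}, use $P^{\pm}z^{n}=P^{\pm}(x\pm t)^{n}$, and regroup to obtain \eqref{power series}--\eqref{power series coeffs}, with uniqueness inherited from Proposition~\ref{Prop Goursat hCR}. The additional remarks on uniform convergence and termwise rearrangement merely make explicit what the paper leaves implicit.
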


\begin{proof}
Due to Proposition \ref{Prop Goursat hCR} we have
\begin{align*}
w(x,t)  &  =P^{+}\sum\limits_{n=0}^{\infty}\alpha_{n}\left(  \frac{x+t}%
{2}\right)  ^{n}+P^{-}\sum\limits_{n=0}^{\infty}\beta_{n}\left(  \frac{x-t}%
{2}\right)  ^{n}\\
&  =\sum\limits_{n=0}^{\infty}\left(  \frac{\alpha_{n}}{2^{n}}P^{+}\left(
x+t\right)  ^{n}+\frac{\beta_{n}}{2^{n}}P^{-}\left(  x-t\right)  ^{n}\right)
.
\end{align*}
Notice that $P^{\pm}z=P^{\pm}\left(  x\pm t\right)  $ and hence $P^{\pm}%
z^{n}=P^{\pm}\left(  x\pm t\right)  ^{n}$. Thus,
\[
w=\sum\limits_{n=0}^{\infty}\left(  \frac{\alpha_{n}}{2^{n}}P^{+}z^{n}
+\frac{\beta_{n}}{2^{n}}P^{-}z^{n}\right)  =\sum\limits_{n=0}^{\infty}\frac
{1}{2^{n}}\left(  P^{+}\alpha_{n}+P^{-}\beta_{n}\right)  z^{n}.
\]
\end{proof}

\begin{remark}
Under the conditions of Proposition \ref{Prop Goursat polynom} we have that
the unique solution $w$ of the Goursat problem \eqref{hCR}, \eqref{condG} can
be represented as a uniformly convergent in $\overline{R}$ Taylor series
\eqref{power series} where
\begin{equation}
a_{n}=\frac{w^{[n]}(0)}{n!}. \label{Taylor coefs}%
\end{equation}
Indeed, let us prove this formula for the coefficients. Due to \eqref{nth Der}
we have that $w^{[n]}=\mathbf{j}^{n}{\partial_{t}^{n}w}$ and due to
\eqref{gensolhCR},
\[
\mathbf{j}^{n}{\partial_{t}^{n}w=}\frac{1}{2^{n}}\left(  P^{+}\mathbf{j}%
^{n}\Phi^{(n)}\left(  \frac{x+t}{2}\right)  +P^{-}\mathbf{j}^{n}\left(
-1\right)  ^{n}\Psi^{(n)}\left(  \frac{x-t}{2}\right)  \right)  .
\]
Now we notice that $P^{+}\mathbf{j}^{n}=P^{+}$ and $P^{-}\mathbf{j}^{n}\left(
-1\right)  ^{n}=P^{-}$ for any $n=0,1,\ldots$. Thus, $w^{[n]}(0)=\frac
{n!}{2^{n}}\left(  P^{+}\alpha_{n}+P^{-}\beta_{n}\right)  =n!a_{n}$.
\end{remark}

\begin{remark}
\label{Rem Wave polynomials}It is possible to give another representation for
the solution of the Goursat problem, namely, a representation as a series in
wave polynomials, cf. \cite[Proposition 1]{KKTT}. Recall that the following
polynomials
\[
p_{0}(x,t)=1,\quad p_{2m-1}(x,t)=\mathcal{R}\bigl((x+\mathbf{j}t)^{m}%
\bigr),\quad p_{2m}(x,t)=\mathcal{I}\bigl((x+\mathbf{j}t)^{m}\bigr),\ m\geq1,
\]
are called wave polynomials. They may be also written as
\begin{equation}
p_{0}(x,t)=1,\quad p_{2m-1}(x,t)=\sum_{\mathrm{even}\text{ }k=0}^{m}\binom
{m}{k}x^{m-k}t^{k},\quad p_{2m}(x,t)=\sum_{\mathrm{odd}\text{ }k=1}^{m}%
\binom{m}{k}x^{m-k}t^{k}, \label{WavePolynomials}%
\end{equation}
cf.\ with the definition of generalized wave polynomials \eqref{um},
\eqref{vm}. Since $z^{n}=p_{2n-1}(x,t)+\mathbf{j}p_{2n}(x,t)$ for $n\geq1$, we
obtain from \eqref{power series coeffs} that
\begin{multline*}
w(x,t)=(P^{+}\alpha_{0}+P^{-}\beta_{0})p_{0}(x,t)\\
+\sum_{n=1}^{\infty}
\frac{P^{+}\alpha_{n}+P^{-}\beta_{n}}{2^{n}}p_{2n-1}(x,t)+\sum_{n=1}^{\infty
}\frac{P^{+}\alpha_{n}-P^{-}\beta_{n}}{2^{n}}p_{2n}(x,t).
\end{multline*}
\end{remark}

Obviously, an arbitrary continuously differentiable solution of (\ref{hCR})
cannot be represented globally as a uniformly convergent power series of the
form (\ref{power series}). Nevertheless the following analogue of the Runge
approximation theorem from classical complex analysis is a corollary of the
preceding proposition.

\begin{proposition}
\label{Prop RungeHCR} Let $w\in C^{1}(\overline{R})$ be a solution of
\eqref{hCR} in $R$. Then there exists a sequence of polynomials $P_{N}%
=\sum\limits_{n=0}^{N}a_{n}z^{n}$ uniformly convergent to $w$ in $\overline
{R}$.
\end{proposition}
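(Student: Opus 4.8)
The plan is to reduce the approximation problem on the square $\overline{R}$ to the one-variable approximation of the Goursat data $\Phi$ and $\Psi$ on $[-b,b]$, and then invoke Proposition~\ref{Prop Goursat polynom}. First I would use the representation \eqref{gensolhCR} from Proposition~\ref{Prop Goursat hCR}: since $w\in C^{1}(\overline{R})$ solves \eqref{hCR}, we may write $w=P^{+}\Phi\!\left(\tfrac{x+t}{2}\right)+P^{-}\Psi\!\left(\tfrac{x-t}{2}\right)$ with $\Phi,\Psi\in C^{1}[-b,b]$ recovered from the characteristic values via \eqref{condG}, namely $\Phi(x)=\mathcal{R}(w(x,x))+\mathcal{I}(w(x,x))-\mathcal{I}(w(0,0))$ and similarly for $\Psi$ from $w(x,-x)$; the precise bookkeeping of the constants is routine.

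Next I would invoke the classical Weierstrass approximation theorem to obtain polynomials $\Phi_{N}(x)=\sum_{n=0}^{N}\alpha_{n}^{(N)}x^{n}$ and $\Psi_{N}(x)=\sum_{n=0}^{N}\beta_{n}^{(N)}x^{n}$ with $\|\Phi-\Phi_{N}\|_{C[-b,b]}\to0$ and $\|\Psi-\Psi_{N}\|_{C[-b,b]}\to0$. (A polynomial is in particular a uniformly convergent power series with only finitely many nonzero terms, so Proposition~\ref{Prop Goursat polynom} applies to the Goursat data $(\Phi_{N},\Psi_{N})$.) By that proposition the unique solution of the Goursat problem \eqref{hCR}, \eqref{condG} with data $(\Phi_{N},\Psi_{N})$ is the polynomial $P_{N}(x,t)=\sum_{n=0}^{N}a_{n}^{(N)}z^{n}$ with $a_{n}^{(N)}=2^{-n}\bigl(P^{+}\alpha_{n}^{(N)}+P^{-}\beta_{n}^{(N)}\bigr)$. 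It remains to estimate $w-P_{N}$ in $\overline{R}$: by linearity $w-P_{N}$ is again given by \eqref{gensolhCR} with data $\Phi-\Phi_{N}$, $\Psi-\Psi_{N}$, and using the norm \eqref{bicom_norm} together with $|P^{\pm}|=\tfrac12$ we get, for $(x,t)\in\overline{R}$,
\[
\left\vert w(x,t)-P_{N}(x,t)\right\vert
=\frac{1}{2}\left\vert (\Phi-\Phi_{N})\!\left(\tfrac{x+t}{2}\right)\right\vert_{\mathbb{C}}
+\frac{1}{2}\left\vert (\Psi-\Psi_{N})\!\left(\tfrac{x-t}{2}\right)\right\vert_{\mathbb{C}}
\le\frac{1}{2}\|\Phi-\Phi_{N}\|_{C[-b,b]}+\frac{1}{2}\|\Psi-\Psi_{N}\|_{C[-b,b]},
\]
since $(x,t)\in\overline{R}$ forces $\tfrac{x\pm t}{2}\in[-b,b]$. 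The right-hand side tends to $0$, so $P_{N}\to w$ uniformly on $\overline{R}$.

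I do not expect a serious obstacle here: the result is genuinely a corollary, and the only thing to be a little careful about is that the approximating polynomials $\Phi_N,\Psi_N$ need not be partial sums of a single power series, so one cannot literally quote Proposition~\ref{Prop Goursat polynom} with a fixed sequence $(\alpha_n),(\beta_n)$ — instead one applies it separately for each $N$ to the pair $(\Phi_N,\Psi_N)$ (each of which is trivially a uniformly convergent power series), which is why the coefficients carry the superscript $(N)$. A minor alternative, avoiding even Weierstrass, would be to note that $w\in C^{1}$ does not by itself give real-analyticity, so one genuinely needs an approximation step; Weierstrass is the cleanest. One could also remark that the same argument, combined with Remark~\ref{Rem Wave polynomials}, shows $w$ is approximated uniformly by linear combinations of the wave polynomials $p_k$, but this is not needed for the statement.
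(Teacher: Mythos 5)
Your proof is correct and follows essentially the same route as the paper's: recover the Goursat data $\Phi,\Psi$ from the characteristic values, approximate them by polynomials via Weierstrass, apply Proposition~\ref{Prop Goursat polynom} to the polynomial data, and estimate $w-P_N$ through the $P^{\pm}$ decomposition and the norm \eqref{bicom_norm}. Your remark that the coefficients must carry a superscript $(N)$ because the approximating polynomials need not be partial sums of a single series is a fair point of precision that the paper leaves implicit.
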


\begin{proof}
We need to prove that for any $\varepsilon>0$ there exist such a number $N$
and coefficients $a_{n}$, $n=0,1,\ldots,N$ that $\left\vert w(x,t)-P_{N}%
(x,t)\right\vert <\varepsilon$ for any point $(x,t)\in\overline{R}$. Let the
scalar functions $\Phi$ and $\Psi$ defined on $[-b,b]$ be such that
(\ref{condG}) hold. As was shown above such scalar functions always exist and
under the condition of the proposition they are obviously from $C^{1}[-b,b]$.
We choose $\varepsilon>0$ and such $\varepsilon_{1,2}>0$ that $\varepsilon
=(\varepsilon_{1}+\varepsilon_{2})/2$. According to the Weierstrass theorem
there exists such a number $N$ and such polynomials $p_{1}$ and $p_{2}$ of
order not greater than a certain $N$ that $\left\vert \Phi\left(  x\right)
-p_{1}(x)\right\vert _{\mathbb{C}}<\varepsilon_{1}$ and $\left\vert
\Psi\left(  x\right)  -p_{2}(x)\right\vert _{\mathbb{C}}<\varepsilon_{2}$,
$-b\leq x\leq b$. Due to Proposition \ref{Prop Goursat polynom} the unique
solution $\widetilde{w}$ of the Goursat problem for equation (\ref{hCR}) with
the boundary conditions
\[
\widetilde{w}(x,x)=P^{+}p_{1}\left(  x\right)  +P^{-}p_{2}\left(  0\right)
,\qquad\widetilde{w}(x,-x)=P^{+}p_{1}\left(  0\right)  +P^{-}p_{2}\left(
x\right)
\]
has the form $\widetilde{w}=P_{N}$ where $P_{N}=\sum_{n=0}^{N}\frac{1}{2^{n}%
}\left(  P^{+}\alpha_{n}+P^{-}\beta_{n}\right)  z^{n}$ with $\alpha_{n}$ and
$\beta_{n}$ being the coefficients of the polynomials $p_{1}$ and $p_{2}$ respectively.

Consider
\[%
\begin{split}
\left\vert w(x,t)-\widetilde{w}(x,t)\right\vert  &  =\left\vert w(x,t)-P_{N}
(x,t)\right\vert \\
&  =\left\vert P^{+}\Phi\left(  \frac{x+t}{2}\right)  +P^{-}\Psi\left(
\frac{x-t}{2}\right)  -P^{+}P_{N}^{+}(x,t)-P^{-}P_{N}^{-}(x,t)\right\vert \\
&  =\frac{1}{2}\left(  \left\vert \Phi\left(  \frac{x+t}{2}\right)  -P_{N}
^{+}(x,t)\right\vert _{\mathbb{C}}+\left\vert \Psi\left(  \frac{x-t}
{2}\right)  -P_{N}^{-}(x,t)\right\vert _{\mathbb{C}}\right)  ,
\end{split}
\]
where we have used \eqref{W+-} and \eqref{bicom_norm}. Notice that $P_{N}%
^{+}(x,t)=\sum_{n=0}^{N}\alpha_{n}\left(  \frac{x+t}{2}\right)  ^{n}%
=p_{1}\left(  \frac{x+t}{2}\right)  $ and $P_{N}^{-}(x,t)=\sum_{n=0}^{N}%
\beta_{n}\left(  \frac{x-t}{2}\right)  ^{n}=p_{2}\left(  \frac{x-t}{2}\right)
$. Thus, $\left\vert w(x,t)-P_{N}(x,t)\right\vert <\frac{1}{2}(\varepsilon
_{1}+\varepsilon_{2})=\varepsilon$.
\end{proof}

\section{Transmutation operators}\label{SectTO}

We give a definition of a transmutation operator from \cite{KT Obzor} which is
a modification of the definition given by Levitan \cite{LevitanInverse}
adapted to the purposes of the present work. Let $E$ be a linear topological
space and $E_{1}$ its linear subspace (not necessarily closed). Let $A$ and
$B$ be linear operators: $E_{1}\rightarrow E$.

\begin{definition}
\label{DefTransmut} A linear invertible operator $T$ defined on the whole $E$
such that $E_{1}$ is invariant under the action of $T$ is called a
transmutation operator for the pair of operators $A$ and $B$ if it fulfills
the following two conditions.

\begin{enumerate}
\item Both the operator $T$ and its inverse $T^{-1}$ are continuous in $E$;

\item The following operator equality is valid
\begin{equation}
AT=TB \label{ATTB}%
\end{equation}
or which is the same
\[
A=TBT^{-1}.
\]
\end{enumerate}
\end{definition}

Our main interest concerns the situation when $A=-\frac{d^{2}}{dx^{2}}+q(x)$,
$B=-\frac{d^{2}}{dx^{2}}$, \ and $q$ is a continuous complex-valued function.
Hence for our purposes it will be sufficient to consider the functional space
$E=C[a,b]$ with the topology of uniform convergence and its subspace $E_{1}$
consisting of functions from $C^{2}\left[  a,b\right]  $. One of the
possibilities to introduce a transmutation operator on $E$ was considered by
Lions \cite{Lions57} and later on in other references (see, e.g.,
\cite{Marchenko}), and consists in constructing a Volterra integral operator
corresponding to a midpoint of the segment of interest. As we begin with this
transmutation operator it is convenient to consider a symmetric segment
$[-b,b]$ and hence the functional space $E=C[-b,b]$. It is worth mentioning
that other well known ways to construct the transmutation operators (see,
e.g., \cite{LevitanInverse}, \cite{Trimeche}) imply imposing initial
conditions on the functions and consequently lead to transmutation operators
satisfying (\ref{ATTB}) only on subclasses of $E_{1}$.

Thus, consider the space $E=C[-b,b]$. In \cite{CKT} and \cite{KrT2012} a
parametrized family of transmutation operators for the defined above $A$ and
$B$ was studied. Operators of this family can be realized in the form of the
Volterra integral operator
\begin{equation}
\mathbf{T}_{h} u(x)=u(x)+\int_{-x}^{x}\mathbf{K}(x,t;h)u(t)dt \label{Tmain}%
\end{equation}
where $\mathbf{K}(x,t;h)=\mathbf{H}\big(\frac{x+t}{2},\frac{x-t}{2};h\big)$,
$h$ is a complex parameter, $|t|\le|x|\le b$ and $\mathbf{H}$ is the unique
solution of the Goursat problem
\begin{equation}
\frac{\partial^{2}\mathbf{H}(u,v;h)}{\partial u\,\partial v}=q(u+v)\mathbf{H}%
(u,v;h), \label{GoursatTh1}%
\end{equation}%
\begin{equation}
\mathbf{H}(u,0;h)=\frac{h}{2}+\frac{1}{2}\int_{0}^{u}q(s)\,ds,\qquad
\mathbf{H}(0,v;h)=\frac{h}{2}. \label{GoursatTh2}%
\end{equation}
If the potential $q$ is continuously differentiable, the kernel $\mathbf{K}$
itself is a solution of the Goursat problem
\begin{equation}
\label{GoursatKh1}\left(  \frac{\partial^{2}}{\partial x^{2}}-q(x)\right)
\mathbf{K}(x,t;h)=\frac{\partial^{2}}{\partial t^{2}}\mathbf{K}(x,t;h),
\end{equation}
\begin{equation}
\label{GoursatKh2}\mathbf{K}(x,x;h)=\frac{h}{2}+\frac{1}{2}\int_{0}%
^{x}q(s)\,ds,\qquad\mathbf{K}(x,-x;h)=\frac{h}{2}.
\end{equation}
If the potential $q$ is $n$ times continuously differentiable, the kernel
$\mathbf{K}(x,t;h)$ is $n+1$ times continuously differentiable with respect to
both independent variables.

\begin{remark}
In the case $h=0$ the operator $\mathbf{T}_{h}$ coincides with the
transmutation operator studied in \cite[Chap. 1, Sect. 2]{Marchenko}. In
\cite{LevitanInverse}, \cite{Lions57}, \cite{Trimeche} it was established that
in the case $q\in C^{1}[-b,b]$ the Volterra-type integral operator
\eqref{Tmain} is a transmutation in the sense of Definition \ref{DefTransmut}
on the space $C^{2}[-b,b]$ if and only if the integral kernel $\mathbf{K}%
(x,t)$ satisfies the Goursat problem \eqref{GoursatKh1}, \eqref{GoursatKh2}.
\end{remark}

The following proposition shows that it is sufficient to know the
transmutation operator $\mathbf{T}_{h_{1}}$ or its integral kernel
$\mathbf{K}_{h_{1}}$ for some particular parameter $h_{1}$ to be able to
construct transmutation operators $\mathbf{T}_{h}$ or their integral kernels
$\mathbf{K}_{h}$ for arbitrary values of the parameter $h$.

\begin{proposition}
[\cite{CKT}, \cite{KT Obzor}]\label{PropChangeOfH} The operators
$\mathbf{T}_{h_{1}}$ and $\mathbf{T}_{h_{2}}$ are related by the expression
\[
\mathbf{T}_{h_{2}}u=\mathbf{T}_{h_{1}}\bigg[u(x)+\frac{h_{2}-h_{1}}2\int
_{-x}^{x} u(t)\,dt\bigg]
\]
valid for any $u\in C[-b,b]$.

The integral kernels $\mathbf{K}(x,t;h_{1})$ and $\mathbf{K}(x,t;h_{2})$ are
related by the expression
\[
\mathbf{K}(x,t;h_{2})=\frac{h_{2}-h_{1}}{2}+\mathbf{K}(x,t;h_{1})+\frac
{h_{2}-h_{1}}{2}\int_{t}^{x}\big( \mathbf{K}(x,s;h_{1})-\mathbf{K}%
(x,-s;h_{1})\big) \,ds.
\]
\end{proposition}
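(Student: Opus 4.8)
The plan is to verify both assertions directly. For the operator identity, the idea is to apply the operator $\mathbf{T}_{h_1}$ to the function $v(x)=u(x)+\frac{h_2-h_1}{2}\int_{-x}^{x}u(t)\,dt$ and to show that the result coincides with $\mathbf{T}_{h_2}u$. Writing out $\mathbf{T}_{h_1}v$ using the Volterra representation \eqref{Tmain}, one obtains $u(x)$ plus three integral terms: the term $\int_{-x}^{x}\mathbf{K}(x,t;h_1)u(t)\,dt$, the term $\frac{h_2-h_1}{2}\int_{-x}^{x}u(t)\,dt$, and a double-integral term $\frac{h_2-h_1}{2}\int_{-x}^{x}\mathbf{K}(x,t;h_1)\bigl(\int_{-t}^{t}u(s)\,ds\bigr)dt$. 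Comparing with $\mathbf{T}_{h_2}u(x)=u(x)+\int_{-x}^{x}\mathbf{K}(x,t;h_2)u(t)\,dt$ and using that this must hold for every continuous $u$, one reads off the required kernel relation after changing the order of integration in the double-integral term to put everything under a single integral $\int_{-x}^{x}(\cdots)u(s)\,ds$. Thus the first statement reduces to the second, and I would present them in that logical order.

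For the kernel relation itself, I see two routes. The direct one is to carry out the Fubini interchange above: in the double integral $\int_{-x}^{x}\mathbf{K}(x,t;h_1)\bigl(\int_{-t}^{t}u(s)\,ds\bigr)dt$ the inner variable $s$ ranges over $|s|\le|t|\le|x|$, so after swapping, $s$ ranges over $[-x,x]$ and for fixed $s$ the variable $t$ ranges over $\{|t|\ge|s|,\ |t|\le x\}=[-x,-|s|]\cup[|s|,x]$, which contributes $\int_{|s|}^{x}\mathbf{K}(x,t;h_1)\,dt+\int_{-x}^{-|s|}\mathbf{K}(x,t;h_1)\,dt=\int_{|s|}^{x}\bigl(\mathbf{K}(x,t;h_1)-\mathbf{K}(x,-t;h_1)\bigr)dt$. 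Combining the three terms and equating coefficients of $u(s)$ against $\mathbf{T}_{h_2}$ yields exactly
\[
\mathbf{K}(x,s;h_2)=\frac{h_2-h_1}{2}+\mathbf{K}(x,s;h_1)+\frac{h_2-h_1}{2}\int_{|s|}^{x}\bigl(\mathbf{K}(x,t;h_1)-\mathbf{K}(x,-t;h_1)\bigr)dt,
\]
and since the integrand is symmetric under $t\mapsto -t$ up to sign the lower limit $|s|$ may be replaced by $s$ (the claimed form). I would mention that the case $x<0$ is handled identically with the obvious sign adjustments, or by the symmetry of the construction.

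Alternatively — and perhaps more cleanly — one can verify the kernel formula by checking that the right-hand side, call it $\widetilde{\mathbf{K}}(x,s)$, satisfies the Goursat problem \eqref{GoursatKh1}, \eqref{GoursatKh2} with parameter $h_2$, and invoke uniqueness. One checks the boundary data: at $s=x$, the integral vanishes and $\widetilde{\mathbf{K}}(x,x)=\frac{h_2-h_1}{2}+\mathbf{K}(x,x;h_1)=\frac{h_2-h_1}{2}+\frac{h_1}{2}+\frac12\int_0^x q=\frac{h_2}{2}+\frac12\int_0^x q$; at $s=-x$ one gets $\widetilde{\mathbf{K}}(x,-x)=\frac{h_2-h_1}{2}+\frac{h_1}{2}=\frac{h_2}{2}$; both agree with \eqref{GoursatKh2}. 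Verifying that $\widetilde{\mathbf{K}}$ solves the PDE \eqref{GoursatKh1} requires differentiating the integral term twice in $x$ and twice in $t=s$, using that $\mathbf{K}(x,\cdot;h_1)$ itself solves \eqref{GoursatKh1}; the boundary contributions from the variable endpoints of integration cancel against the lower-order terms. This second approach needs $q\in C^1$, whereas the direct Fubini computation works for merely continuous $q$, so I would favor presenting the direct argument.

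The main obstacle is simply organizing the change of order of integration over the region $|s|\le|t|\le|x|$ correctly, including the bookkeeping that produces the combination $\mathbf{K}(x,t;h_1)-\mathbf{K}(x,-t;h_1)$ and the lower limit; everything else is routine substitution and cancellation. No deep idea is needed beyond Fubini and the uniqueness already recorded for the Goursat problem.
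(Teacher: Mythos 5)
The paper does not prove this proposition---it is quoted from \cite{CKT} and \cite{KT Obzor}---so your argument can only be judged on its own merits. The core computations are sound: the Fubini interchange over $\{|s|\le|t|\le|x|\}$, the use of the oddness of $t\mapsto\mathbf{K}(x,t;h_1)-\mathbf{K}(x,-t;h_1)$ to replace the lower limit $|s|$ by $s$, and the boundary-data check $\widetilde{\mathbf{K}}(x,\pm x)$ are all correct (for $s=-x$ you implicitly use that the integral term vanishes by the same oddness, which is true). One bookkeeping slip: as written, the contribution from $t\in[-x,-|s|]$ is $+\int_{-x}^{-|s|}\mathbf{K}(x,t;h_1)\,dt$, which after $t\mapsto-t$ gives $+\int_{|s|}^{x}\mathbf{K}(x,-t;h_1)\,dt$, i.e.\ the \emph{sum} $\mathbf{K}(x,t;h_1)+\mathbf{K}(x,-t;h_1)$; the minus sign you need comes from the fact that for $t<0$ the inner integral $\int_{-t}^{t}u(s)\,ds$ is negatively oriented, so that range of $t$ enters with weight $-\mathbf{K}(x,t;h_1)$. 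Your final combination is right, but the displayed intermediate step is not.

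The genuine gap is logical. Your first paragraph derives the operator identity \emph{from} the kernel identity; your favored ``direct route'' then derives the kernel identity by ``equating coefficients of $u(s)$ against $\mathbf{T}_{h_2}$,'' which presupposes $\mathbf{T}_{h_1}v=\mathbf{T}_{h_2}u$, i.e.\ the operator identity. Taken together these two implications are circular: neither statement is ever established independently. The only non-circular ingredient in your proposal is the second route (verify that the right-hand side $\widetilde{\mathbf{K}}$ solves the Goursat problem \eqref{GoursatKh1}, \eqref{GoursatKh2} with parameter $h_2$ and invoke uniqueness), which you then set aside in favor of the circular one. The correct organization is: (i) the Fubini computation shows $\mathbf{T}_{h_1}\bigl[u+\tfrac{h_2-h_1}{2}\int_{-x}^{x}u\bigr]=u+\int_{-x}^{x}\widetilde{\mathbf{K}}(x,s)u(s)\,ds$; (ii) Goursat uniqueness gives $\widetilde{\mathbf{K}}=\mathbf{K}(\cdot,\cdot;h_2)$; (iii) both claims follow. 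Your worry that (ii) needs $q\in C^{1}$ is avoidable: either verify the identity in the characteristic variables using the problem \eqref{GoursatTh1}, \eqref{GoursatTh2} and its equivalent integral equations, which are well posed for continuous $q$, or approximate $q$ by $C^{1}$ potentials and pass to the limit using the uniform convergence of the kernels, exactly as the paper does in the proof of Theorem \ref{Th Transmutation}.
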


The following theorem states that the operators $\mathbf{T}_{h}$ are indeed
transmutations in the sense of Definition \ref{DefTransmut}.

\begin{theorem}
\label{Th Transmutation} Let $q\in C[-b,b]$. Then the operator $\mathbf{T}%
_{h}$ given by \eqref{Tmain} satisfies the equality
\begin{equation}
\left(  -\frac{d^{2}}{dx^{2}}+q(x)\right)  \mathbf{T}_{h}[u]=\mathbf{T}%
_{h}\left[  -\frac{d^{2}}{dx^{2}}(u)\right]  \label{ThTransm}%
\end{equation}
for any $u\in C^{2}[-b,b]$.
\end{theorem}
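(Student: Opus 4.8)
The plan is to reduce the statement to the already-known Goursat problem for the kernel $\mathbf{H}$, using an approximation argument to handle the mere continuity (rather than $C^1$-smoothness) of $q$. First I would treat the smooth case: assume $q\in C^1[-b,b]$, so that by the facts recalled after \eqref{Tmain} the kernel $\mathbf{K}(x,t;h)$ is $C^2$ in both variables and satisfies the Goursat problem \eqref{GoursatKh1}, \eqref{GoursatKh2}. Given $u\in C^2[-b,b]$, I would compute $\left(-\frac{d^2}{dx^2}+q(x)\right)\mathbf{T}_h[u](x)$ directly by differentiating under the integral sign in \eqref{Tmain} twice, carefully accounting for the variable limits $\pm x$ via the Leibniz rule. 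This produces boundary terms involving $\mathbf{K}(x,\pm x;h)$ and $\partial_x\mathbf{K}(x,\pm x;h)$, together with an interior integral of $\left(\frac{\partial^2}{\partial x^2}-q(x)\right)\mathbf{K}(x,t;h)\,u(t)$. On the other side, $\mathbf{T}_h\left[-\frac{d^2}{dx^2}u\right](x)=-u''(x)-\int_{-x}^{x}\mathbf{K}(x,t;h)u''(t)\,dt$, and I would integrate by parts twice in $t$ to move the derivatives off $u$ onto $\mathbf{K}$, again generating boundary terms at $t=\pm x$ and an interior integral of $\frac{\partial^2}{\partial t^2}\mathbf{K}(x,t;h)\,u(t)$.

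Comparing the two sides, the interior integrals agree precisely because of \eqref{GoursatKh1}, and the remaining identity is an identity among the boundary terms: it reduces to the two characteristic conditions \eqref{GoursatKh2} plus the relation obtained by differentiating $\mathbf{K}(x,x;h)=\frac{h}{2}+\frac12\int_0^x q(s)\,ds$ along the characteristic (which gives $\partial_x\mathbf{K}(x,x;h)+\partial_t\mathbf{K}(x,x;h)=\frac12 q(x)$), and similarly $\partial_x\mathbf{K}(x,-x;h)-\partial_t\mathbf{K}(x,-x;h)=0$ from differentiating $\mathbf{K}(x,-x;h)=\frac{h}{2}$. Bookkeeping these boundary terms so that they cancel and leave exactly $-u''(x)$ matching the $-u''(x)$ on the other side is the routine but slightly delicate heart of the smooth case; this is essentially the classical Marchenko computation adapted to the parameter $h$.

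The main obstacle is the case $q\in C[-b,b]$ only, where $\mathbf{K}$ need not be twice differentiable and the formal computation above is not literally available. Here I would argue by density: choose a sequence $q_m\in C^1[-b,b]$ with $q_m\to q$ uniformly on $[-b,b]$. Let $\mathbf{H}_m$ be the corresponding solutions of the Goursat problem \eqref{GoursatTh1}, \eqref{GoursatTh2} with $q$ replaced by $q_m$; by the standard successive-approximation construction of the solution of this Goursat problem (Volterra iteration), $\mathbf{H}_m\to\mathbf{H}$ uniformly on the triangle $\{|v|\le u\le b\}$, and consequently the kernels $\mathbf{K}_m(x,t;h)\to\mathbf{K}(x,t;h)$ uniformly, so $\mathbf{T}_h^{(m)}u\to\mathbf{T}_h u$ uniformly for each fixed $u\in C^2[-b,b]$. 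By the smooth case, $\left(-\frac{d^2}{dx^2}+q_m(x)\right)\mathbf{T}_h^{(m)}[u]=\mathbf{T}_h^{(m)}\left[-u''\right]$ for every $m$. The right-hand sides converge uniformly to $\mathbf{T}_h\left[-u''\right]$; on the left, $q_m\mathbf{T}_h^{(m)}u\to q\,\mathbf{T}_h u$ uniformly, so $\frac{d^2}{dx^2}\mathbf{T}_h^{(m)}[u]$ converges uniformly, which forces $\mathbf{T}_h u\in C^2$ and $\frac{d^2}{dx^2}\mathbf{T}_h^{(m)}[u]\to\frac{d^2}{dx^2}\mathbf{T}_h[u]$. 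Passing to the limit in \eqref{ThTransm} for $q_m$ yields \eqref{ThTransm} for $q$. I would also remark that $\mathbf{T}_h$ is invertible (as a Volterra operator of the second kind) with continuous inverse, so together with \eqref{ThTransm} it is a transmutation in the sense of Definition \ref{DefTransmut} on $C^2[-b,b]$.
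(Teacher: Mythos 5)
Your proposal is correct and follows essentially the same route as the paper: establish (or cite) the identity for smooth potentials via the Goursat problem for the kernel, then approximate $q$ uniformly by $C^1$ potentials and pass to the limit, using that uniform convergence of $\mathbf{T}_h^{(m)}[u]$ and of its second derivatives forces the limit to be $C^2$ with the expected second derivative (the paper phrases this as closedness of $\partial_x^2$ on the domain with vanishing initial data at $0$). The only cosmetic differences are that the paper first reduces to $h=0$ via Proposition \ref{PropChangeOfH} and cites the smooth case from \cite{KrT2012} rather than recomputing it.
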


\begin{remark}
This theorem under additional assumptions on $q$ was proved in \cite{CKT} and
\cite{KrT2012}. In \cite{KT Obzor} it was shown that \eqref{ThTransm} holds
for any $h$ whenever it holds for some particular value $h_{1}$.
\end{remark}

\begin{proof}
It follows from Proposition \ref{PropChangeOfH} that it is sufficient to prove
\eqref{ThTransm} for one particular $h$, see \cite[Proof of Theorem 5.6]{KT
Obzor} for details. We take $h=0$ and the operator $\mathbf{T}:=\mathbf{T}%
_{0}$. Let $\mathbf{K}$ denotes the integral kernel of $\mathbf{T}$. There
exists a sequence of potentials $\{q_{n}\}_{n\in\mathbb{N}}\subset
C^{1}[-b,b]$ such that $q_{n}\to q$, $n\to\infty$ uniformly on $[-b,b]$. Let
$\mathbf{K}_{q_{n}}$ and $\mathbf{T}_{q_{n}}$ be the integral kernels and the
transmutation operators corresponding to the potentials $q_{n}$. Then
$\mathbf{K}_{q_{n}}\to\mathbf{K}$ uniformly and $\mathbf{T}_{n}\to\mathbf{T}$
in the operator norm, see, e.g., \cite[Chap. 1, Sect. 2]{Marchenko}. Take a
function $u\in C^{2}[-b,b]$. For the operators $\mathbf{T}_{q_{n}}$ the
following equality holds \cite[Theorem 6]{KrT2012}
\[
\left(  -\frac{d^{2}}{dx^{2}}+q_{n}(x)\right)  \mathbf{T}_{q_{n}%
}[u]=\mathbf{T}_{q_{n}}\left[  -\frac{d^{2}}{dx^{2}}(u)\right]  ,
\]
which with the use of \eqref{Tmain} may be rewritten as
\[
\frac{d^{2}}{dx^{2}}\biggl(\int_{-x}^{x} \mathbf{K}_{q_{n}}%
(x,t)u(t)\,dt\biggr) = -u^{\prime\prime}(x) + q_{n}(x) \mathbf{T}_{q_{n}%
}[u](x) + \mathbf{T}_{q_{n}}[u^{\prime\prime}](x).
\]
Let $y_{n}(x) := \int_{-x}^{x} \mathbf{K}_{q_{n}}(x,t)u(t)\,dt$ and $z_{n} :=
-u^{\prime\prime}+q_{n} \mathbf{T}_{q_{n}}[u] + \mathbf{T}_{q_{n}}%
[u^{\prime\prime}]$. Then $y_{n}\to y := \int_{-x}^{x} \mathbf{K}%
(x,t)u(t)\,dt$ and $z_{n} \to z:= -u^{\prime\prime}+ q \mathbf{T}[u] +
\mathbf{T}[u^{\prime\prime}]$, $n\to\infty$. Moreover, functions $y_{n}$
satisfy the initial conditions $y_{n}(0)=y^{\prime}_{n}(0)=0$. Now
\eqref{ThTransm} follows from the fact that the operator $\partial_{x}^{2}$
with the domain $\{ y\in C^{2}[-b,b] : y(0)=y^{\prime}(0)=0\}$ is closed, see,
e.g., \cite{Kato}.
\end{proof}

The following theorem summarizes the mapping properties of the operators
$\mathbf{T}_{h}$ establishing that there exists a value of the parameter $h$
such that $\mathbf{T}_{h}$ maps $x^{k}$ to $\varphi_{k}$ defined by
(\ref{phik}).

\begin{theorem}
[\cite{CKT}, \cite{KrT2012}]\label{Th Transmute}Let $q$ be a continuous
complex valued function of an independent real variable $x\in\lbrack-b,b]$ for
which there exists a particular solution $f$ of \eqref{SLhom} such that $f\in
C^{2}[-b,b]$, $f\neq0$ on $[-b,b]$ and normalized as $f(0)=1$. Denote
$h:=f^{\prime}(0)\in\mathbb{C}$. Suppose $\mathbf{T}_{h}$ is the operator
defined by \eqref{Tmain} and $\varphi_{k}$, $k\in\mathbb{N}_{0}$ are functions
defined by \eqref{phik}. Then
\begin{equation}
\label{mapping powers 1}\mathbf{T}_{h} x^{k} = \varphi_{k}(x) \qquad\text{for
any}\ k\in\mathbb{N}_{0}.
\end{equation}
Moreover, $\mathbf{T}_{h}$ maps a solution $v$ of an equation $v^{\prime
\prime}+\omega^{2}v=0$, where $\omega$ is a complex number, into a solution
$u$ of the equation $u^{\prime\prime}-q(x)u+\omega^{2}u=0$ with the following
correspondence of the initial values
\begin{equation}\label{mapping IC}
u(0)=v(0),\qquad u^{\prime}(0)=v^{\prime}(0)+hv(0).
\end{equation}
\end{theorem}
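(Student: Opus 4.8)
The plan is to prove the two assertions of Theorem \ref{Th Transmute} in order, deriving the second (the mapping of solutions with the shifted initial-value correspondence) from the first together with the transmutation property already established in Theorem \ref{Th Transmutation}.

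\textbf{Step 1: The identity $\mathbf{T}_h x^k = \varphi_k$.} The natural route is induction on $k$, exploiting the Goursat problem \eqref{GoursatKh1}--\eqref{GoursatKh2} satisfied by $\mathbf{K}(x,t;h)$ (first for $q\in C^1$, then passing to the limit as in the proof of Theorem \ref{Th Transmutation} to cover merely continuous $q$). For $k=0$ one computes directly from \eqref{Tmain} and \eqref{GoursatKh2} that $\mathbf{T}_h 1 = 1 + \int_{-x}^x \mathbf{K}(x,t;h)\,dt$; one must check this equals $\varphi_0 = f\widetilde X^{(0)} = f$, which amounts to verifying that $y(x):=\int_{-x}^x\mathbf{K}(x,t;h)\,dt$ solves $y''-q y = -q$ with $y(0)=0$, $y'(0)=h$ — precisely the equation for $f-1$ with the stated initial data (here $h=f'(0)$ is used, and $f(0)=1$). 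The equation for $y$ follows by differentiating under the integral, using \eqref{GoursatKh1} to trade $\partial_x^2$ for $\partial_t^2$, integrating the $t$-derivatives, and inserting the boundary values \eqref{GoursatKh2}. For the inductive step one applies the transmutation equality \eqref{ThTransm} with $u(x)=x^k$: since $-\tfrac{d^2}{dx^2}x^k = -k(k-1)x^{k-2}$, one gets $(-\tfrac{d^2}{dx^2}+q)\mathbf{T}_h x^k = -k(k-1)\mathbf{T}_h x^{k-2} = -k(k-1)\varphi_{k-2}$ by the induction hypothesis; combined with the known initial values of $\mathbf{T}_h x^k$ at $0$ (read off from the Volterra form: $\mathbf{T}_h x^k$ and its derivative at $0$ equal those of $x^k$ plus a correction involving $h$ and the kernel), this is a second-order linear initial-value problem whose unique solution one identifies with $\varphi_k$ by checking that $\varphi_k$ defined through \eqref{X1}--\eqref{X3}, \eqref{phik} satisfies the same recursive differential relation $\varphi_k'' - q\varphi_k = k(k-1)\varphi_{k-2}$ with the matching initial data. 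This last verification is a direct computation from the definitions of $X^{(n)}$, $\widetilde X^{(n)}$ (note $\varphi_k$ alternates between $fX^{(k)}$ and $f\widetilde X^{(k)}$, and the factor $(f^2)^{\pm1}$ in \eqref{X2}--\eqref{X3} is exactly what is needed). I expect the bookkeeping of the initial values — keeping precise track of the $h$-dependent correction terms coming from $\partial_x(\int_{-x}^x\mathbf{K}u)$ and the parity $\mathbf{K}(x,-x;h)=h/2$ — to be the main technical obstacle; this is presumably where the normalization $h=f'(0)$ enters decisively.

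\textbf{Step 2: Mapping of solutions of $v''+\omega^2 v=0$.} Given such a $v$, write $u:=\mathbf{T}_h v$. By Theorem \ref{Th Transmutation}, $(-\tfrac{d^2}{dx^2}+q)u = \mathbf{T}_h[-v''] = \mathbf{T}_h[\omega^2 v] = \omega^2 u$, so $u''-q u + \omega^2 u = 0$. It remains to compute $u(0)$ and $u'(0)$ from the Volterra representation \eqref{Tmain}: since the integral $\int_{-x}^x\mathbf{K}(x,t;h)v(t)\,dt$ vanishes at $x=0$, we get $u(0)=v(0)$; differentiating, $u'(x) = v'(x) + \mathbf{K}(x,x;h)v(x) + \mathbf{K}(x,-x;h)v(-x) + \int_{-x}^x \partial_x\mathbf{K}(x,t;h)v(t)\,dt$, and evaluating at $x=0$ using \eqref{GoursatKh2} (so $\mathbf{K}(0,0;h)=\mathbf{K}(0,-0;h)=h/2$) yields $u'(0) = v'(0) + \tfrac{h}{2}v(0) + \tfrac{h}{2}v(0) = v'(0) + h v(0)$, which is \eqref{mapping IC}. (One may alternatively deduce Step 2 from Step 1 by expanding $v$ in the power series $\cos\omega x$, $\tfrac{\sin\omega x}{\omega}$, applying \eqref{mapping powers 1} termwise with the aid of Theorem \ref{ThGenSolSturmLiouville} to recognize the resulting series \eqref{u1u2}, and checking the initial data of $g_1,g_2$ against \eqref{du1du2}; but the direct computation above is shorter and avoids convergence issues.)

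In summary: establish $k=0$ by reducing $\int_{-x}^x\mathbf{K}\,dt$ to the IVP for $f-1$; run the induction using \eqref{ThTransm} and the matching recursion for $\varphi_k$; then obtain the solution-mapping statement by a one-line application of \eqref{ThTransm} plus a direct evaluation of $u(0)$, $u'(0)$ from \eqref{Tmain} and the Goursat conditions \eqref{GoursatKh2}. Throughout, the reduction from $C^1$ to continuous $q$ is handled by the approximation argument already used in the proof of Theorem \ref{Th Transmutation}.
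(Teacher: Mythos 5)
The paper does not prove Theorem \ref{Th Transmute}; it is quoted from \cite{CKT} and \cite{KrT2012}, so there is no in-text proof to compare against. On its own merits your argument is sound and is essentially the standard route: the base case plus the induction driven by the operator identity \eqref{ThTransm} and the recursion $\varphi_k''-q\varphi_k=k(k-1)\varphi_{k-2}$ (which indeed follows from \eqref{X1}--\eqref{X3} and \eqref{phik}, the alternating factor $(f^2)^{\pm1}$ making the cross terms cancel), together with uniqueness for the resulting second-order IVP; and Step 2 is a correct one-line application of \eqref{ThTransm} plus the evaluation of $u(0)$, $u'(0)$ from \eqref{Tmain} and \eqref{GoursatKh2}. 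Two small points. First, a sign slip: $y=\int_{-x}^x\mathbf{K}(x,t;h)\,dt=\mathbf{T}_h[1]-1$ satisfies $y''-qy=+q$ (since $(\mathbf{T}_h1)''=q\,\mathbf{T}_h1$), and likewise $(f-1)''-q(f-1)=+q$; you wrote $-q$ on both sides, so the identification still goes through, but the displayed equation is wrong as written. Second, your induction in steps of two needs both base cases $k=0$ and $k=1$; the case $k=1$ is covered because the inhomogeneous term $k(k-1)\varphi_{k-2}$ vanishes there (so no $\varphi_{-1}$ is invoked), but this is worth saying explicitly. The reduction from $q\in C^1$ to $q\in C[-b,b]$ by the approximation argument of Theorem \ref{Th Transmutation} is the right device for justifying the differentiations of the kernel.
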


\begin{remark}
The mapping property \eqref{mapping powers 1} of the transmutation operator
allows one to see that the SPPS representations \eqref{u1u2} from Theorem
\ref{ThGenSolSturmLiouville} are nothing but the images of Taylor expansions
of the functions $\cosh\sqrt{\lambda}x$ and $\frac{1}{\sqrt{\lambda}}
\sinh\sqrt{\lambda}x$. Moreover, equality \eqref{mapping powers 1} is behind a
new method for solving Sturm-Liouville problems proposed in \cite{KT MMET
2012}.
\end{remark}

In what follows we assume that $f\in C^{2}[-b,b]$, $f\neq0$ on $[-b,b]$,
$f(0)=1$ and denote $h:=f^{\prime}(0)\in\mathbb{C}$. Any such function is
associated with an operator $\mathbf{T}_{h}$. For convenience, from now on we
will write $T_{f}$ instead of $\mathbf{T}_{h}$ and the integral kernel of
$T_{f}$ will be denoted by $\mathbf{K}_{f}$. Together with the function $f$
let us consider the function $1/f$. Notice that if $f$ is a solution of
(\ref{SLhom}) with the potential $q_{f}=f^{\prime\prime}/f$, the function
$1/f$ is a solution of the equation
\[
\left(  -\frac{d^{2}}{dx^{2}}+q_{1/f}(x)\right)  u=0
\]
with the Darboux associated potential
\[
q_{1/f}=-q_{f}+2\Bigl(\frac{f^{\prime}}{f}\Bigr)^{2}.
\]
Consider the transmutation operator $T_{1/f}$ which satisfies the equality
\[
\left(  -\frac{d^{2}}{dx^{2}}+q_{1/f}(x)\right)  {T}_{1/f}[u]=T_{1/f}\left[  -\frac{d^{2}}{dx^{2}}(u)\right]
\]
for any $u\in C^{2}[-b,b]$ and transforms $x^{k}$ into the functions $\psi
_{k}(x)$, $k\in\mathbb{N}_{0}$ defined by (\ref{psik}), i.e.
\begin{equation}
{T}_{1/f}x^{k}=\psi_{k}(x)\qquad\text{for any}\ k\in\mathbb{N}_{0}.
\label{mapping powers 2}%
\end{equation}

It follows from \eqref{mapping powers 1}, \eqref{mapping powers 2} and
definitions \eqref{um}, \eqref{vm} and \eqref{WavePolynomials} that
generalized wave polynomials are images of wave polynomials under the action
of operators $T_{f}$ and $T_{1/f}$, i.e.
\begin{equation}
\label{Mapping Wave Polynomial}T_{f} p_{n} = u_{n} \quad\text{and}\quad
T_{1/f} p_{n} = v_{n}\quad\text{for any }n\ge0.
\end{equation}

In \cite{KrT2012} explicit formulas were obtained for the kernel
$\mathbf{K}_{1/f}(x,t)$ in terms of $\mathbf{K}_{f}(x,t)$. In order to obtain
a simpler expression for the integral kernel $\mathbf{K}_{1/f}(x,t)$ we
assumed that the original integral kernel $\mathbf{K}_{f}(x,t)$ is known in
the larger domain than required by definition \eqref{Tmain}. Namely, suppose
that the function $\mathbf{K}_{f}(x,t)$ is known and is continuously
differentiable in the domain $\bar{\Pi}:\ -b\leq x\leq b,-b\leq t\leq b$. It
is worth mentioning that such continuation of the integral kernel to the
domain $\bar{\Pi}$ is always possible due to \eqref{GoursatTh1},
\eqref{GoursatTh2} and the general theory of Goursat problems. We refer the
reader to \cite{KrT2012} for further details. In Corollary \ref{Cor K Vekua}
based on pseudoanalytic function theory we obtain another expression for the
integral kernel $\mathbf{K}_{1/f}(x,t)$ suitable in the case when the integral
kernel $\mathbf{K}_{f}(x,t)$ is known only in the natural domain $|t|\le|x|\le
b$.

\begin{proposition}
[\cite{KrT2012}]\label{PropTT_D} Under the conditions of Theorem
\ref{Th Transmute} the transmutation operators $T_{f}$ and $T_{1/f}$ are
related by the expressions
\[
T_{1/f}[u](x)=\frac{1}{f(x)}\bigg(\int_{0}^{x}f(\eta)T_{f}[u^{\prime}%
](\eta)\,d\eta+u(0)\bigg)
\]
and
\[
T_{f}[u](x)=f(x)\bigg(\int_{0}^{x}\frac{1}{f(\eta)}T_{1/f}[u^{\prime}%
](\eta)\,d\eta+u(0)\bigg),
\]
valid for any $u\in C^{1}[-b,b]$, and their integral kernels $\mathbf{K}%
_{f}(x,t):=\mathbf{K}_{f}(x,t;h)$ and $\mathbf{K}_{1/f}(x,t):=\mathbf{K}%
_{1/f}(x,t;-h)$ are related by the expressions
\begin{equation}
\mathbf{K}_{1/f}(x,t)=-\frac{1}{f(x)}\bigg(\int_{-t}^{x}\partial_{t}%
\mathbf{K}_{f}(s,t)f(s)\,ds+\frac{f^{\prime}(0)}{2}f(-t)\bigg) \label{K2}%
\end{equation}
and
\begin{equation}
\mathbf{K}_{f}(x,t)=-f(x)\bigg(\int_{-t}^{x}\partial_{t}\mathbf{K}%
_{1/f}(s,t)\frac1{f(s)}\,ds-\frac{f^{\prime}(0)}{2f(-t)}\bigg) . \label{K1}%
\end{equation}
\end{proposition}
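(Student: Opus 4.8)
The plan is to prove the two operator identities first and then to read off the kernel identities \eqref{K2}, \eqref{K1} by inserting the Volterra representation \eqref{Tmain}. For the first operator identity, $T_{1/f}[u](x)=\frac{1}{f(x)}\bigl(\int_{0}^{x}f(\eta)T_{f}[u^{\prime}](\eta)\,d\eta+u(0)\bigr)$, I would verify it on the monomials $u=x^{k}$ and then extend by continuity. For $u\equiv 1$ both sides equal $1/f=\psi_{0}$, and for $u=x^{k}$ with $k\geq1$ the right-hand side is $\frac{k}{f(x)}\int_{0}^{x}f(\eta)\varphi_{k-1}(\eta)\,d\eta$ by Theorem \ref{Th Transmute}, while the left-hand side is $\psi_{k}(x)$ by \eqref{mapping powers 2}; distinguishing the parity of $k$ (which swaps the roles of $X^{(n)}$ and $\widetilde{X}^{(n)}$ in \eqref{phik}, \eqref{psik}) the claim reduces to $\widetilde X^{(k)}(x)=k\int_0^x f^2(s)\widetilde X^{(k-1)}(s)\,ds$ for odd $k$ and $X^{(k)}(x)=k\int_0^x f^2(s)X^{(k-1)}(s)\,ds$ for even $k$, which are exactly \eqref{X2}, \eqref{X3}. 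Since polynomials are dense in $C^{1}[-b,b]$ and both sides depend continuously on $u$ in the $C^{1}$-norm with values in $C[-b,b]$ (the right-hand side because $u\mapsto u^{\prime}$ is bounded from $C^{1}[-b,b]$ to $C[-b,b]$, $T_{f}$ is bounded on $C[-b,b]$, and integration against $f$ followed by division by $f$ is bounded), the identity holds for all $u\in C^{1}[-b,b]$. The second operator identity is the first one applied with $f$ replaced by $1/f$, which is admissible since $1/f\in C^{2}[-b,b]$, $(1/f)(0)=1$, $(1/f)^{\prime}(0)=-h$, $q_{1/(1/f)}=q_{f}$ and $T_{1/(1/f)}=T_{f}$.

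Next, to obtain \eqref{K2} I would substitute $T_{f}[u^{\prime}](\eta)=u^{\prime}(\eta)+\int_{-\eta}^{\eta}\mathbf{K}_{f}(\eta,t)u^{\prime}(t)\,dt$ into the first operator identity, integrate $\int_{0}^{x}fu^{\prime}$ by parts (using $f(0)=1$), interchange the order of integration in the resulting double integral (for $x\geq0$ the region $\{0\leq\eta\leq x,\ |t|\leq\eta\}$ becomes $\{|t|\leq x,\ |t|\leq\eta\leq x\}$; the case $x<0$ is handled symmetrically), and integrate by parts once more in $t$; the boundary terms of this last integration vanish because $G(t):=\int_{|t|}^{x}f(\eta)\mathbf{K}_{f}(\eta,t)\,d\eta$ is $0$ at $t=\pm x$. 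Bringing $T_{1/f}[u](x)$ into the form $u(x)+\int_{-x}^{x}(\cdots)u(t)\,dt$ and comparing the kernel with $\mathbf{K}_{1/f}(x,t)$, using the Goursat boundary value $\mathbf{K}_{f}(-t,t)=h/2$ read off from \eqref{GoursatKh2} (equivalently \eqref{GoursatTh2}), yields \eqref{K2} for $t\leq0$. Formula \eqref{K1} then follows from \eqref{K2} by once more replacing $f$ with $1/f$ (which interchanges $\mathbf{K}_{f}$ and $\mathbf{K}_{1/f}$ and turns $h$ into $-h$).

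The range $t>0$ is where I expect the main obstacle. There the comparison of kernels produces, besides the terms appearing in \eqref{K2}, the extra contribution $f^{\prime}(t)-f(t)\mathbf{K}_{f}(t,t)$ coming from the diagonal value $\mathbf{K}_{f}(t,t)=\tfrac{h}{2}+\tfrac12\int_{0}^{t}q$ of \eqref{GoursatKh2} and from the integration by parts of $\int_{0}^{x}fu^{\prime}$; reconciling it with \eqref{K2} is equivalent to the auxiliary identity $\int_{-t}^{t}f(s)\mathbf{K}_{f}(s,t)\,ds=f(t)-1$. I would prove the latter by setting $\Psi(t):=\int_{-t}^{t}f(s)\mathbf{K}_{f}(s,t)\,ds$, checking $\Psi(0)=0$ and $\Psi^{\prime}(0)=h=f^{\prime}(0)$, and computing $\Psi^{\prime\prime}$: differentiate under the integral sign, use \eqref{GoursatKh1} to replace the second derivative of $\mathbf{K}_{f}$ in its second argument by the second derivative in its first argument minus $q(s)\mathbf{K}_{f}$, integrate this by parts twice, and observe that $f^{\prime\prime}=q f$ cancels the two integrals containing $q$, while the chain-rule consequences of \eqref{GoursatKh2}, namely $\frac{d}{dt}\mathbf{K}_{f}(t,t)=\tfrac12 q(t)$ and $\frac{d}{dt}\mathbf{K}_{f}(-t,t)=0$, force all remaining boundary terms to collapse to $\Psi^{\prime\prime}=q f=f^{\prime\prime}$; with the initial data this gives $\Psi=f-1$. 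This step uses the classical Goursat equation \eqref{GoursatKh1} and hence presupposes $q\in C^{1}[-b,b]$; for merely continuous $q$ one approximates $q$ uniformly by $C^{1}$ potentials, passes to the limit in the corresponding kernels and their first derivatives, and recovers the general case. Finally, some bookkeeping is needed to ensure that all the integrals written above remain inside the square $\bar{\Pi}$ to which $\mathbf{K}_{f}$ has been continued, which for $t>0$ involves values of its first argument below $|t|$.
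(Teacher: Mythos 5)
The paper does not actually prove Proposition \ref{PropTT_D}: it imports it verbatim from \cite{KrT2012}, so there is no internal argument to measure yours against, and your proof has to be judged on its own terms. It is correct. The verification of the first operator identity on monomials does reduce, after separating parities, exactly to the recursions \eqref{X2} and \eqref{X3} (both with exponent $+1$ in the relevant parity), and the extension by density of polynomials in $C^{1}[-b,b]$ is legitimate since both sides are bounded from $C^{1}[-b,b]$ to $C[-b,b]$. The substitution $f\mapsto 1/f$ for the second identity and for \eqref{K1} is consistent with the convention $\mathbf{K}_{1/f}(x,t)=\mathbf{K}_{1/f}(x,t;-h)$ because $(1/f)'(0)=-h$. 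For the kernel formula, the case $t\le0$ is indeed immediate from the order interchange, the integrations by parts and $\mathbf{K}_{f}(-t,t)=h/2$; the only genuinely delicate point is $t>0$, and your auxiliary identity for $\Psi(t):=\int_{-t}^{t}f(s)\mathbf{K}_{f}(s,t)\,ds$, namely $\Psi(t)=f(t)-1$, is exactly what is needed and is true: after using \eqref{GoursatKh1}, two integrations by parts, $f''=q_{f}f$, and the characteristic relations $\frac{d}{dt}\mathbf{K}_{f}(t,t)=q_{f}(t)/2$ and $\frac{d}{dt}\mathbf{K}_{f}(-t,t)=0$, everything collapses to $\Psi''=q_{f}f=f''$, with $\Psi(0)=0$ and $\Psi'(0)=h$ matching the data of $f-1$. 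Your reduction of merely continuous $q_{f}$ to $C^{1}$ potentials by approximation is the same device the paper itself employs in the proof of Corollary \ref{Cor K Vekua}, where uniform convergence of the kernels together with their first-order derivatives is extracted from the integral-equation form of \eqref{GoursatTh1}--\eqref{GoursatTh2}, so that step is available. One detail worth writing out explicitly: $G(t)=\int_{|t|}^{x}f(\eta)\mathbf{K}_{f}(\eta,t)\,d\eta$ is not differentiable at $t=0$ (its one-sided derivatives differ by $2f(0)\mathbf{K}_{f}(0,0)=h$), so the final integration by parts must be performed separately on $[-x,0]$ and $[0,x]$; the resulting kernel is nonetheless continuous across $t=0$ because the jump in $G'$ is exactly cancelled by the term $f'(t)$ that appears only for $t\ge0$. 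That is bookkeeping, not a gap.
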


The following commutation relations immediately follow from Proposition
\ref{PropTT_D}.

\begin{corollary}
[\cite{KrT2012}]\label{Cor Commutation Relations}The following operator
equalities hold on $C^{1}[-b,b]$:
\begin{align*}
\partial_{x}fT_{1/f}  &  =fT_{f}\partial_{x}\\
\partial_{x}\frac{1}{f}T_{f}  &  =\frac{1}{f}T_{1/f}\partial_{x}.
\end{align*}
\end{corollary}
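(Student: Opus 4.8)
The plan is to derive both operator equalities directly from the two integral identities for the transmutation operators stated in Proposition \ref{PropTT_D}. Consider the first identity
\[
T_{1/f}[u](x)=\frac{1}{f(x)}\bigg(\int_{0}^{x}f(\eta)\,T_{f}[u^{\prime}](\eta)\,d\eta+u(0)\bigg),
\]
valid for every $u\in C^{1}[-b,b]$. I would multiply both sides by $f(x)$ and differentiate with respect to $x$. On the left this gives $\partial_{x}\bigl(fT_{1/f}[u]\bigr)(x)$, while on the right, since the bracketed expression is an antiderivative plus a constant, the fundamental theorem of calculus yields exactly $f(x)\,T_{f}[u^{\prime}](x)=\bigl(fT_{f}\partial_{x}\bigr)[u](x)$. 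Since $u\in C^{1}[-b,b]$ is arbitrary, this is precisely the operator equality $\partial_{x}fT_{1/f}=fT_{f}\partial_{x}$ on $C^{1}[-b,b]$.

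For the second relation I would apply the same argument to the companion identity
\[
T_{f}[u](x)=f(x)\bigg(\int_{0}^{x}\frac{1}{f(\eta)}\,T_{1/f}[u^{\prime}](\eta)\,d\eta+u(0)\bigg):
\]
dividing by $f(x)$, differentiating in $x$, and invoking the fundamental theorem of calculus gives $\partial_{x}\bigl(\tfrac1f T_{f}[u]\bigr)(x)=\tfrac{1}{f(x)}T_{1/f}[u^{\prime}](x)$, which is the stated equality $\partial_{x}\tfrac1f T_{f}=\tfrac1f T_{1/f}\partial_{x}$. The only regularity points to check are that $f$ is non-vanishing on $[-b,b]$ (so $1/f$ and the quotients make sense) and that $x\mapsto f(\eta)T_{f}[u^{\prime}](\eta)$ is continuous in $\eta$, so that the integral is a $C^{1}$ function of its upper limit; both follow from the hypotheses $f\in C^{2}[-b,b]$, $f\neq0$, together with the fact that $T_{f}$ maps $C[-b,b]$ continuously into $C[-b,b]$.

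There is essentially no obstacle here — the corollary is a routine differentiation of the integral formulas in Proposition \ref{PropTT_D}. The only mild subtlety worth a word is that the formulas in Proposition \ref{PropTT_D} are stated for $u\in C^{1}[-b,b]$, so $u'\in C[-b,b]$ and $T_f[u']$, $T_{1/f}[u']$ are well defined and continuous; this is exactly the class on which the resulting operator identities are asserted to hold, so no extension or density argument is needed.
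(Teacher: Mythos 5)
Your argument is exactly the route the paper takes: the corollary is stated there as an immediate consequence of Proposition \ref{PropTT_D}, obtained by multiplying (respectively dividing) the integral identities by $f$ and differentiating via the fundamental theorem of calculus. The proposal is correct, and your remarks on the continuity of $\eta\mapsto f(\eta)T_{f}[u'](\eta)$ and the non-vanishing of $f$ cover the only regularity points that need checking.
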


\begin{example}
[\cite{KrT2012}]\label{ModelExample} Take $f(x)=x+1$ and some segment
$[-b,b]\subset(-1,1)$. Then $q_{f}=0$ and $q_{1/f} = \frac2{(x+1)^{2}}$. For
the potential $q_{f}$ the transmutation operator $\mathbf{T}_{0}$ is obviously
the identity operator, hence $\mathbf{K}(x,t;0) = 0$ and from Proposition
\ref{PropChangeOfH} we obtain
\[
\mathbf{K}_{f}(x,t) := \mathbf{K}(x,t;1) = 1/2.
\]
Hence we get from \eqref{K2} that
\[
\mathbf{K}_{1/f}(x,t) = \frac{t-1}{2(x+1)}.
\]
It is easy to see that the integral kernels $\mathbf{K}_{f}$ and
$\mathbf{K}_{1/f}$ satisfy the Goursat problems \eqref{GoursatKh1},
\eqref{GoursatKh2} and relation \eqref{K1} holds.

For more examples of transmutation integral kernels we refer the reader to
\cite{KrT2012}.
\end{example}

\section{Goursat-to-Goursat transmutation operators}\label{SectGGTO}

Let $\widetilde{u}$ and $u$ be solutions of the equations $\square
\widetilde{u}=0$ and $\left(  \square-q(x)\right)  u=0$ in $\overline
{\mathbf{R}}$ where $\mathbf{R}$ is a square with the diagonal with endpoints $(b,b)$ and
$(-b,-b)$, respectively such that $u=T_{f}\widetilde{u}$. Consider the
operator $T_{G}$ mapping the Goursat data corresponding to $\widetilde{u}$
into the Goursat data corresponding to $u$,
\[
T_{G}:\quad\binom{\widetilde{u}(x,x)}{\widetilde{u}(x,-x)}\quad\longmapsto
\quad\binom{u(x,x)}{u(x,-x)}.
\]
Denote $\varphi(x):=\widetilde{u}(x,x)$, $\psi(x):=\widetilde{u}(x,-x)$,
$\Phi(x):=u(x,x)$, $\Psi(x):=u(x,-x)$. We have $\varphi(0)=\psi(0)$ and
$\Phi(0)=\Psi(0)$. The solution of the wave equation $\widetilde{u}$ is
represented via its Goursat data as follows $\widetilde{u}(x,t)=\varphi
(\frac{x+t}{2})+\psi(\frac{x-t}{2})-\varphi(0)$. Application of the operator
$T_{f}$ gives us the equalities%
\[
T_{f}\varphi\Bigl(\frac{x+t}{2}\Bigr)=\varphi\Bigl(\frac{x+t}{2}%
\Bigr)+\int_{-x}^{x}\mathbf{K}_{f}(x,\tau)\varphi\Bigl(\frac{\tau+t}%
{2}\Bigr)d\tau,
\]
\[
T_{f}\psi\Bigl(\frac{x-t}{2}\Bigr)=\psi\Bigl(\frac{x-t}{2}\Bigr)+\int_{-x}%
^{x}\mathbf{K}_{f} (x,\tau)\psi\Bigl(\frac{\tau-t}{2}\Bigr)d\tau
\]
and $T_{f}[\varphi(0)]=\varphi(0)f$. Considering the value of these
expressions when $t=x$ and $t=-x$ and introducing obvious changes of variables
we obtain the following relations%
\[
\Phi(x)=\varphi(x)+2\int_{0}^{x}\mathbf{K}_{f}(x,2t-x)\varphi(t)dt+\psi
(0)+2\int_{-x}^{0}\mathbf{K}_{f}(x,2t+x)\psi(t)dt-\varphi(0)f(x)
\]
and%
\[
\Psi(x)=\varphi(0)+2\int_{-x}^{0}\mathbf{K}_{f}(x,2t+x)\varphi(t)dt+\psi
(x)+2\int_{0}^{x}\mathbf{K}_{f}(x,2t-x)\psi(t)dt-\varphi(0)f(x).
\]
Due to the boundedness of the operators $T_{f}$ and $T_{f}^{-1}$ as well as to
the continuous dependence of the solutions of the Goursat problems under
consideration on their respective Goursat data (see, e.g., \cite[Sect.
15]{Vladimirov}) the operator $T_{G}$ together with $T_{G}^{-1}$ are bounded
on vector functions from $C^{1}[-b,b]\times C^{1}[-b,b]$ equipped with a
suitable, for example, maximum, norm.

It is easy to establish the following mapping properties of $T_{G}$,%
\begin{equation}
T_{G}:\quad2^{n-1}\binom{x^{n}}{x^{n}}\quad\longmapsto\quad\binom
{u_{2n-1}(x,x)}{u_{2n-1}(x,x)} \label{mapping1}%
\end{equation}
and
\begin{equation}
T_{G}:\quad2^{n-1}\binom{x^{n}}{-x^{n}}\quad\longmapsto\quad\binom
{u_{2n}(x,x)}{-u_{2n}(x,x)}. \label{mapping2}%
\end{equation}
Indeed, consider the wave polynomial $p_{2n-1}(x,t)$, $n=1,2,\ldots$, see
\eqref{WavePolynomials}. Its Goursat data have the form $p_{2n-1}%
(x,x)=p_{2n-1}(x,-x)=2^{n-1}x^{n}$. The image of $p_{2n-1}$ under the action
of $T_{f}$ is $u_{2n-1}$, see \eqref{Mapping Wave Polynomial}. Using the
property \eqref{umParity} we obtain (\ref{mapping1}). Analogously,
consideration of $p_{2n}$ leads to (\ref{mapping2}).

\begin{proposition}
\label{Prop Unif Conv u} Let $u$ be a regular solution of the equation
$\left(  \square-q(x)\right)  u=0$ in $\overline{\mathbf{R}}$ such that its
Goursat data admit the following series expansions%
\begin{equation}
\frac{1}{2}\left(  u(x,x)+u(x,-x)\right)  =c_{0}u_{0}(x,x)+\sum_{n=1}^{\infty
}c_{n}u_{2n-1}(x,x), \label{+1}%
\end{equation}
and
\begin{equation}
\frac{1}{2}\left(  u(x,x)-u(x,-x)\right)  =\sum_{n=1}^{\infty}b_{n}%
u_{2n}(x,x), \label{-1}%
\end{equation}
both uniformly convergent on $[-b,b]$. Then for any $(x,t)\in\overline
{\mathbf{R}}$,
\begin{equation}
u(x,t)=c_{0}u_{0}(x,t)+\sum_{n=1}^{\infty}\left(  c_{n}u_{2n-1}(x,t)+b_{n}%
u_{2n}(x,t)\right)  \label{u}%
\end{equation}
and the series converges uniformly in $\overline{\mathbf{R}}$.
\end{proposition}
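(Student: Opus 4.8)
The plan is to transport the statement through the Goursat-to-Goursat operator $T_G$ and its inverse, reducing the claim to the already-established uniform convergence results for the free wave equation (Propositions~\ref{Prop Goursat polynom} and \ref{Prop RungeHCR}). First I would observe that the right-hand side of \eqref{u} is, formally, the image under $T_f$ of the series $\widetilde u(x,t):=c_0 p_0(x,t)+\sum_{n=1}^\infty\bigl(c_n p_{2n-1}(x,t)+b_n p_{2n}(x,t)\bigr)$, by virtue of the mapping relation \eqref{Mapping Wave Polynomial}. So the first real step is to show this candidate wave series converges. Evaluating its Goursat data and using $p_{2n-1}(x,\pm x)=2^{n-1}x^n$, $p_{2n}(x,x)=-p_{2n}(x,-x)$ with $p_{2m}(x,x)$ a nonzero multiple of $x^m$ (one checks $p_{2m}(x,x)=m\,2^{m-1}x^m$ from \eqref{WavePolynomials}), the hypotheses \eqref{+1}--\eqref{-1} say precisely that $\widetilde u(x,x)$ and $\widetilde u(x,-x)$ admit uniformly convergent power series expansions on $[-b,b]$. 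Indeed, applying $T_f^{-1}$ to \eqref{+1}, \eqref{-1} and using the boundedness of $T_G^{-1}$ together with \eqref{mapping1}, \eqref{mapping2} shows that $\widetilde u(x,x)=\varphi(x)$ and $\widetilde u(x,-x)=\psi(x)$ are given by uniformly convergent series $\sum \alpha_n x^n$, $\sum\beta_n x^n$ (with $\alpha_n,\beta_n$ simple rescalings of $c_n,b_n$).

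Next I would invoke Proposition~\ref{Prop Goursat polynom}: since $\varphi$ and $\psi$ have uniformly convergent power series expansions on $[-b,b]$, the unique solution of the Goursat problem \eqref{hCR} with data $\varphi$, $\psi$ is the uniformly convergent power series $\widetilde w=\sum a_n z^n$. Splitting $z^n=p_{2n-1}+\mathbf j p_{2n}$ as in Remark~\ref{Rem Wave polynomials} identifies $\mathcal R(\widetilde w)$ with exactly the wave-polynomial series $\widetilde u(x,t)$ above, and the uniform convergence of $\sum a_n z^n$ on $\overline R$ (hence on $\overline{\mathbf R}$ after the obvious affine change of square) gives uniform convergence of the partial sums $\widetilde u_N(x,t):=c_0 p_0+\sum_{n=1}^N(c_n p_{2n-1}+b_n p_{2n})$ to a solution $\widetilde u$ of $\square\widetilde u=0$ in $\overline{\mathbf R}$. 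This is the key technical input; everything after it is soft.

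Finally I would apply $T_f$. Because $T_f$ acts as a bounded Volterra integral operator in the $x$-variable (uniformly in $t$, by \eqref{Tmain} and the boundedness of $\mathbf K_f$ on the relevant domain), it commutes with uniform limits: from $\widetilde u_N\to\widetilde u$ uniformly on $\overline{\mathbf R}$ we get $T_f\widetilde u_N\to T_f\widetilde u=u$ uniformly on $\overline{\mathbf R}$. By \eqref{Mapping Wave Polynomial}, $T_f\widetilde u_N=c_0 u_0+\sum_{n=1}^N(c_n u_{2n-1}+b_n u_{2n})$ is exactly the $N$th partial sum of \eqref{u}. Hence \eqref{u} holds with uniform convergence on $\overline{\mathbf R}$, and $T_f\widetilde u$ has the prescribed Goursat data \eqref{+1}--\eqref{-1} by construction, so it coincides with the given regular solution $u$ by uniqueness of the Goursat problem for $\square-q$.

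I expect the main obstacle to be the careful bookkeeping in the first paragraph: correctly relating the coefficients $c_n$, $b_n$ in the wave-polynomial expansions to the power-series coefficients $\alpha_n$, $\beta_n$ of the Goursat data of $\widetilde u$, verifying that the hypotheses \eqref{+1}--\eqref{-1} really do translate into \emph{uniformly convergent} power series for $\varphi$ and $\psi$ (this is where one uses boundedness of $T_G^{-1}$ and the parity relations \eqref{umParity}, rather than any delicate estimate), and then making sure the affine identification of the square $\mathbf R$ with the square $R$ of Section~\ref{SectHPFT} is compatible with all the formulas. Once that dictionary is in place, Propositions~\ref{Prop Goursat polynom} and the boundedness of $T_f$ close the argument with no further analysis.
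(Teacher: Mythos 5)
Your proposal follows essentially the same route as the paper: pass the Goursat data through $T_G^{-1}$ using \eqref{mapping1}--\eqref{mapping2} to obtain uniformly convergent power series for the characteristics data of $\widetilde u=T_f^{-1}u$, invoke the wave-polynomial expansion of the resulting Goursat problem (the paper cites Proposition~1 of \cite{KKTT}, which is the content of your appeal to Proposition~\ref{Prop Goursat polynom} via Remark~\ref{Rem Wave polynomials}), and push forward with the bounded operator $T_f$ using $T_fp_n=u_n$. The only blemish is the parenthetical claim $p_{2m}(x,x)=m\,2^{m-1}x^m$, which should read $p_{2m}(x,x)=2^{m-1}x^m$ (the sum of $\binom{m}{k}$ over odd $k$ is $2^{m-1}$); this does not affect the argument since the correct scaling is already encoded in \eqref{mapping2}.
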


\begin{proof}
Notice that if (\ref{u}) is valid then (\ref{+1}) and (\ref{-1}) hold. Indeed,
due to \eqref{umParity}
\[%
\begin{split}
\frac{1}{2}\left(  u(x,x)+u(x,-x)\right)   &  =c_{0}u_{0}(x,x)\\
&  +\frac{1}{2} \sum_{n=1}^{\infty}\left(  c_{n}\left(  u_{2n-1}%
(x,x)+u_{2n-1}(x,x)\right)  +b_{n}\left(  u_{2n} (x,x)-u_{2n}(x,x)\right)
\right) \\
&  =c_{0}u_{0}(x,x)+\sum_{n=1}^{\infty}c_{n}u_{2n-1}(x,x)
\end{split}
\]
and similarly (\ref{-1}) can be verified.

Suppose that (\ref{+1}) and (\ref{-1}) are valid. Then we have
\[
\binom{u(x,x)}{u(x,-x)}=c_{0}\binom{u_{0}(x,x)}{u_{0}(x,x)}+\sum_{n=1}%
^{\infty}\left(  c_{n}\binom{u_{2n-1}(x,x)}{u_{2n-1}(x,x)}+b_{n}\binom
{u_{2n}(x,x)}{-u_{2n}(x,x)}\right)  .
\]
From (\ref{mapping1}) and (\ref{mapping2}) we obtain
\begin{equation}
\label{phipsi}%
\begin{split}
\binom{\varphi(x)}{\psi(x)}  &  := T_{G}^{-1}\binom{u(x,x)}{u(x,-x)}%
=c_{0}\binom{1}{1}+\sum_{n=1}^{\infty}2^{n-1}\left(  c_{n}\binom{x^{n}}{x^{n}%
}+b_{n}\binom{x^{n}}{-x^{n}}\right) \\
&  =\binom{c_{0}+\sum_{n=1}^{\infty}2^{n-1}\left(  c_{n}+b_{n}\right)  x^{n}%
}{c_{0}+\sum_{n=1}^{\infty}2^{n-1}\left(  c_{n}-b_{n}\right)  x^{n}}.
\end{split}
\end{equation}
Under the conditions of the proposition and due to the boundedness of
$T_{G}^{-1}$ we have that both series in the last equality are uniformly
convergent on $[-b,b]$. Then applying Proposition 1 from \cite{KKTT} we obtain
that the unique solution $\widetilde{u}$ of the Goursat problem for the wave
equation in $\overline{\mathbf{R}}$ and with the Goursat data $\binom
{\varphi(x)}{\psi(x)}$ has the form
\begin{equation}
\widetilde{u}(x,t)=c_{0}p_{0}(x,t)+\sum_{n=1}^{\infty}\left(  c_{n}%
p_{2n-1}(x,t)+b_{n}p_{2n}(x,t)\right)  \label{util}%
\end{equation}
and the series converges uniformly in $\overline{\mathbf{R}}$. Since
$u=T_{f}\widetilde{u}$, $T_{f}p_{n}=u_{n}$ and $T_{f}$ is bounded we obtain
the uniform convergence of the series (\ref{u}).
\end{proof}

\section{Transmutations for the Vekua operator}\label{SectTransmutVekua}

By analogy with \cite{CKM AACA} we introduce the following pair of operators
\begin{equation}
\mathbf{V}_{1}=T_{f}\mathcal{R}+\mathbf{j}T_{1/f}\mathcal{I} \label{Tbold1}%
\end{equation}
and
\begin{equation}
\mathbf{V}_{2}=T_{1/f}\mathcal{R}+\mathbf{j}T_{f}\mathcal{I}. \label{Tbold2}%
\end{equation}

\begin{proposition}
\label{Prop Transm dz} The following equalities hold for any $\mathbb{B}%
$-valued, continuously differentiable function $w$ defined on $\overline
{\mathbf{R}}$.
\[
\left(  \partial_{\overline{z}}-\frac{\partial_{\overline{z}}f}{f}C\right)
\mathbf{V}_{1}w=\mathbf{V}_{2}\left(  \partial_{\overline{z}}w\right)
,\qquad\left(  \partial_{\overline{z}}+\frac{\partial_{z}f}{f}C\right)
\mathbf{V}_{2}w=\mathbf{V}_{1}\left(  \partial_{\overline{z}}w\right)  .
\]%
\[
\left(  \partial_{z}-\frac{\partial_{z}f}{f}C\right)  \mathbf{V}%
_{1}w=\mathbf{V}_{2}\left(  \partial_{z}w\right)  ,\qquad\left(  \partial
_{z}+\frac{\partial_{\overline{z}}f}{f}C\right)  \mathbf{V}_{2}w=\mathbf{V}%
_{1}\left(  \partial_{z}w\right)  .
\]

\end{proposition}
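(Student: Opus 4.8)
The plan is to reduce all four intertwining relations to the two commutation identities of Corollary~\ref{Cor Commutation Relations}, the only additional ingredient being the elementary remark that $T_{f}$ and $T_{1/f}$, when applied to a function of the two variables $(x,t)$, act on the variable $x$ with $t$ serving merely as a parameter (the limits $\pm x$ of the integral in~\eqref{Tmain} do not involve $t$). Consequently $T_{f}$ and $T_{1/f}$ commute with $\partial_{t}$, with the projectors $\mathcal{R}$, $\mathcal{I}$ and the conjugation $C$, and with multiplication by any function of $t$ alone. Writing $w=u+\mathbf{j}v$ with $u=\mathcal{R}(w)$, $v=\mathcal{I}(w)$ scalar and, since $w\in C^{1}(\overline{\mathbf{R}})$, of class $C^{1}$, we obtain $\mathbf{V}_{1}w=T_{f}u+\mathbf{j}T_{1/f}v$, $\mathbf{V}_{2}w=T_{1/f}u+\mathbf{j}T_{f}v$, $C\mathbf{V}_{1}w=T_{f}u-\mathbf{j}T_{1/f}v$ and $C\mathbf{V}_{2}w=T_{1/f}u-\mathbf{j}T_{f}v$.

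Next I would use that $f=f(x)$, hence $\partial_{\bar{z}}f=\partial_{z}f=\tfrac{1}{2}f'(x)$ is scalar and all four coefficients appearing in the statement equal $\tfrac{f'}{2f}$. Expanding $\partial_{\bar{z}}=\tfrac{1}{2}(\partial_{x}-\mathbf{j}\partial_{t})$, $\partial_{z}=\tfrac{1}{2}(\partial_{x}+\mathbf{j}\partial_{t})$ and using $\mathbf{j}^{2}=1$, one writes out both sides of each of the four identities as explicit $\mathbb{B}$-valued expressions in the scalar functions $T_{f}u,\ T_{f}v,\ T_{1/f}u,\ T_{1/f}v$ and their $x$- and $t$-derivatives, and compares the scalar ($\mathbf{j}$-free) and the $\mathbf{j}$-components separately. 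In each case the $t$-derivative terms cancel identically, and the remaining terms match precisely when $f\,\partial_{x}\bigl(\tfrac{1}{f}T_{f}g\bigr)=T_{1/f}\partial_{x}g$ and $\tfrac{1}{f}\partial_{x}\bigl(fT_{1/f}g\bigr)=T_{f}\partial_{x}g$ for $g\in C^{1}[-b,b]$ (with $g=u$ or $g=v$). These are exactly the relations $\partial_{x}\tfrac{1}{f}T_{f}=\tfrac{1}{f}T_{1/f}\partial_{x}$ and $\partial_{x}fT_{1/f}=fT_{f}\partial_{x}$ of Corollary~\ref{Cor Commutation Relations}, multiplied on the left by $f$, respectively by $\tfrac{1}{f}$. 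For instance, for the first identity the $\mathbf{j}$-free part forces $\partial_{x}T_{f}u-\tfrac{f'}{f}T_{f}u=T_{1/f}\partial_{x}u$ and the $\mathbf{j}$-part forces $\partial_{x}T_{1/f}v+\tfrac{f'}{f}T_{1/f}v=T_{f}\partial_{x}v$; the remaining three identities give the same two requirements with the roles of $u$ and $v$ possibly interchanged.

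The algebra is thus entirely forced, and the only point deserving care — which I would state explicitly — is the admissibility of the two operations used above: differentiating $T_{f}$ and $T_{1/f}$ through the $t$-variable, and invoking Corollary~\ref{Cor Commutation Relations} in the $x$-variable with $t$ held fixed. Both are legitimate here: for $w\in C^{1}(\overline{\mathbf{R}})$ and each fixed $t$, the sections $x\mapsto u(x,t)$ and $x\mapsto v(x,t)$ belong to $C^{1}[-b,b]$, so the commutation relations apply; and differentiation in $t$ under the integral sign is justified by the continuity of $\partial_{t}u$, $\partial_{t}v$ and of the kernels $\mathbf{K}_{f}$, $\mathbf{K}_{1/f}$, together with the fact that the endpoints $\pm x$ of the integration do not depend on $t$, so no boundary terms arise. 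I expect this admissibility check to be the only genuine (and minor) obstacle.
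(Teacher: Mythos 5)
Your proposal is correct and follows essentially the same route as the paper: a direct component-wise expansion of both sides that reduces everything to the two commutation relations of Corollary~\ref{Cor Commutation Relations}, together with the fact that $T_{f}$ and $T_{1/f}$ act only in $x$ and hence commute with $\partial_{t}$ (a point the paper uses implicitly and you justify explicitly). No gaps.
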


\begin{proof}
The proof consists in a direct calculation with the aid of the commutation
relations from Corollary \ref{Cor Commutation Relations}. For example, for
$w=u+\mathbf{j}v$ we have
\begin{align*}
\left(  \partial_{\overline{z}}-\frac{\partial_{\overline{z}}f}{f}C\right)
\mathbf{V}_{1}w  &  =\frac{1}{2}\left(  f\partial_{\overline{z}}\left(
\frac{1}{f}\mathcal{R}\left(  \mathbf{V}_{1}w\right)  \right)  +\mathbf{j}%
\frac{1}{f}\partial_{\overline{z}}\left(  f\mathcal{I}\left(  \mathbf{V}%
_{1}w\right)  \right)  \right) \\
&  =\frac{1}{2}\left(  f\partial_{x}\left(  \frac{1}{f}T_{f}u\right)
-\mathbf{j}\partial_{t}T_{f}u+\mathbf{j}\frac{1}{f}\partial_{x}\left(
fT_{1/f}v\right)  -\partial_{t}T_{1/f}v\right) \\
&  =\frac{1}{2}\left(  T_{1/f}\partial_{x}u-\mathbf{j}T_{f}\partial
_{t}u+\mathbf{j}T_{f}\partial_{x}v-T_{1/f}\partial_{t}v\right)  =\mathbf{V}%
_{2}\left(  \partial_{\overline{z}}w\right)  .
\end{align*}
\end{proof}

An immediate corollary from Proposition \ref{Prop Transm dz} is the fact that
the operators $\mathbf{V}_{1}$ and $\mathbf{V}_{2}$ map the hyperbolic
analytic functions satisfying (\ref{hCR}) into the solutions of
(\ref{Vekua main hyper}) and (\ref{Vekua successor}) respectively. Moreover,
they map powers of the variable $z$ into the corresponding formal powers.

\begin{proposition}
\label{PropMapComplexPowers}For any $z\in\overline{\mathbf{R}}$,
$n\in\mathbb{N}_{0}$ and $a\in\mathbb{B}$ the following equalities are valid
\[
\mathbf{V}_{1}[az^{n}]=Z^{(n)}(a,0;z)\quad\text{and}\quad\mathbf{V}_{2}%
[az^{n}]=Z_{1}^{(n)}(a,0;z).
\]
\end{proposition}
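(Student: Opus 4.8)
The plan is to prove the two identities by induction on $n$, using the recursive definitions of both sides and the intertwining properties established in Proposition \ref{Prop Transm dz}. The base case $n=0$ is essentially a matter of unwinding definitions: $\mathbf{V}_1[a] = f\mathcal{R}(a) + (\mathbf{j}/f)\mathcal{I}(a)$ since $T_f 1 = \varphi_0 = f$ and $T_{1/f}1 = \psi_0 = 1/f$ by \eqref{mapping powers 1}, \eqref{mapping powers 2}, and by Definition \ref{DefFormalPower_bi} with the generating pair $(F_0,G_0)=(f,\mathbf{j}/f)$ the formal power $Z^{(0)}(a,0;z)$ is precisely the combination $\lambda F_0 + \mu G_0$ with $\lambda F_0(0)+\mu G_0(0)=a$, i.e. $\lambda = \mathcal{R}(a)$, $\mu = \mathbf{j}\mathcal{I}(a)$ (using $f(0)=1$); this matches. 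The analogous check for $\mathbf{V}_2$ uses the generating pair $(1/f, f\mathbf{j})$ for the successor equation \eqref{Vekua succ}.

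For the inductive step I would exploit the fact (equation \eqref{nth Der} and the displayed computation preceding it, applied to the scalar-valued/$\mathbb{D}$-valued setting $f\equiv 1$) that $\partial_z(az^n) = a\,n z^{n-1}$, combined with the $(F,G)$-derivative relation from Property 3 of the formal powers, namely $\tfrac{d_{(F_0,G_0)}}{dz}Z^{(n)}(a,0;z) = n Z_1^{(n-1)}(a,0;z)$. The third identity in Proposition \ref{Prop Transm dz}, $\left(\partial_z - \tfrac{\partial_z f}{f}C\right)\mathbf{V}_1 w = \mathbf{V}_2(\partial_z w)$, together with the observation that for a solution $W$ of \eqref{Vekua main x} the operator $\partial_z - \tfrac{\partial_z f}{f}C$ acting on $W$ computes exactly the Bers derivative $\overset{\circ}{W}$ (compare \eqref{Vekua successor} and the discussion of $W^{[n]}$; this is where I would need a short lemma, or a citation to \cite{APFT}, identifying the differential operator form of the $(F,G)$-derivative on main Vekua solutions), gives
\[
\overset{\circ}{\left(\mathbf{V}_1[az^n]\right)} = \mathbf{V}_2\left(\partial_z(az^n)\right) = \mathbf{V}_2[a\,n z^{n-1}] = n\, Z_1^{(n-1)}(a,0;z),
\]
the last equality by the inductive hypothesis for $\mathbf{V}_2$. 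Since also $\overset{\circ}{Z^{(n)}(a,0;z)} = n Z_1^{(n-1)}(a,0;z)$ and both $\mathbf{V}_1[az^n]$ and $Z^{(n)}(a,0;z)$ are solutions of \eqref{Vekua main x} (the former by Proposition \ref{Prop Transm dz} since $az^n$ solves \eqref{hCR}), they have the same Bers antiderivative up to the ambiguity described after Definition \ref{DefFormalPower_bi}; matching the value at $z=0$ via the asymptotic formula $Z^{(n)}(a,0;z)\sim a z^n \to 0$ and $\mathbf{V}_1[az^n]|_{z=0}$ (using that $T_f$, $T_{1/f}$ are Volterra operators fixing the value at $0$ appropriately, or rather that $az^n$ vanishes to order $n\ge 1$ at the origin) pins down the constant and yields $\mathbf{V}_1[az^n] = Z^{(n)}(a,0;z)$. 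The symmetric argument with the fourth identity of Proposition \ref{Prop Transm dz} handles $\mathbf{V}_2[az^n] = Z_1^{(n)}(a,0;z)$; note the two statements are coupled, so the induction must be run on the pair simultaneously.

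I expect the main obstacle to be the bookkeeping in the inductive step — specifically, verifying cleanly that the differential operator $\partial_z - \tfrac{\partial_z f}{f}C$ restricted to solutions of the main Vekua equation \eqref{Vekua main x} coincides with the Bers $(F,G)$-derivative $\tfrac{d_{(F_0,G_0)}}{dz}$, and correspondingly for the successor pair, so that the recursion for $Z^{(n)}$ is genuinely matched step by step. This is standard in pseudoanalytic function theory (and the elliptic analogue is exactly the content of \cite{CKM AACA}), but it requires writing $W = \varphi F_0 + \psi G_0$ and checking $(\partial_z\varphi)F_0 + (\partial_z\psi)G_0 = (\partial_z - \tfrac{\partial_z f}{f}C)W$ using $\partial_{\bar z}W = \tfrac{\partial_{\bar z}f}{f}\overline{W}$. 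A secondary, purely technical point is confirming that $\mathbf{V}_1$ and $\mathbf{V}_2$ commute with $\partial_z$ in the precise intertwined sense needed and that the constant-of-integration ambiguity is always resolved by the common vanishing at the center $z_0=0$; once the operator-form identification of the Bers derivative is in hand, the rest is routine.
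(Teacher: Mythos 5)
Your argument is correct in substance, but it is a genuinely different route from the paper's. The paper proves this in one step: it expands $az^{n}=(a'+\mathbf{j}b')\sum_{m=0}^{n}\binom{n}{m}x^{n-m}\mathbf{j}^{m}t^{m}$, applies $\mathbf{V}_{1}=T_{f}\mathcal{R}+\mathbf{j}T_{1/f}\mathcal{I}$ term by term using $T_{f}x^{k}=\varphi_{k}$ and $T_{1/f}x^{k}=\psi_{k}$ (formulas \eqref{mapping powers 1}, \eqref{mapping powers 2}), and observes that the result is literally the explicit representation \eqref{Zn}--\eqref{Zneven} of the formal powers. You instead run a coupled induction on $n$ via the intertwining relations of Proposition \ref{Prop Transm dz}, the recursion $\frac{d_{(F_{0},G_{0})}}{dz}Z^{(n)}=nZ_{1}^{(n-1)}$, and uniqueness of the Bers antiderivative up to $\lambda F+\mu G$, pinned down by evaluation at $z=0$ (where the Volterra operators reduce to the identity, so $\mathbf{V}_{1}[az^{n}]$ vanishes there for $n\ge1$, matching $Z^{(n)}(a,0;0)=0$). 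The lemma you flag as the main obstacle --- that $\partial_{z}-\frac{\partial_{z}f}{f}C$ restricted to solutions of \eqref{Vekua main x} is the Bers derivative --- is in fact already available in the text: combining the Vekua equation $\partial_{x}W=\mathbf{j}\partial_{t}W+\frac{f'}{f}\overline{W}$ with \eqref{nth Der} gives $\partial_{z}W-\frac{f'}{2f}\overline{W}=\mathbf{j}\partial_{t}W=\overset{\circ}{W}$, so your inductive step closes. The trade-off: the paper's proof is shorter but leans entirely on having precomputed the closed-form expressions \eqref{Znodd}, \eqref{Zneven}; yours is longer but structural, and would survive in settings where the formal powers are only known through their recursive definition. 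Two small corrections: in your base case the scalar coefficient on $G_{0}$ should be $\mu=\mathcal{I}(a)$ (not $\mathbf{j}\mathcal{I}(a)$, which is not scalar) --- the final expression $f\mathcal{R}(a)+(\mathbf{j}/f)\mathcal{I}(a)$ you wrote is nevertheless the right one; and the uniqueness-up-to-$\lambda F+\mu G$ step is exactly the paper's statement that $\int_{z_{0}}^{z}\overset{\circ}{W}\,d_{(F,G)}z=W(z)-\varphi(z_{0})F(z)-\psi(z_{0})G(z)$, so you can cite that rather than reprove it.
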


\begin{proof}
The proof consists in the observation that for $a=a^{\prime}+\mathbf{j}%
b^{\prime}$ and $z=x+\mathbf{j}t$ one has
\[
az^{n}=\left(  a^{\prime}+\mathbf{j}b^{\prime}\right)  \sum_{m=0}^{n}
\binom{n}{m}x^{n-m}\mathbf{j}^{m}t^{m}%
\]
and the result follows from the formulas \eqref{Zn}, \eqref{Znodd},
\eqref{Zneven} by application of the mapping properties
\eqref{mapping powers 1}, \eqref{mapping powers 2}.
\end{proof}

Notice that both operators $\mathbf{V}_{1}$ and $\mathbf{V}_{2}$ are bounded
on the space of continuous functions. Indeed, consider
\begin{align*}
\left\vert \mathbf{V}_{1}w\right\vert  &  =\frac{1}{2}\left(  \left\vert
\left(  \mathbf{V}_{1}w\right)  ^{+}\right\vert _{\mathbb{C}}+\left\vert
\left(  \mathbf{V}_{1}w\right)  ^{-}\right\vert _{\mathbb{C}}\right) \\
&  =\frac{1}{2}\left(  \left\vert \mathcal{R}\left(  \mathbf{V}_{1}w\right)
+\mathcal{I}\left(  \mathbf{V}_{1}w\right)  \right\vert _{\mathbb{C}%
}+\left\vert \mathcal{R}\left(  \mathbf{V}_{1}w\right)  -\mathcal{I}\left(
\mathbf{V}_{1}w\right)  \right\vert _{\mathbb{C}}\right) \\
&  \leq\left\vert \mathcal{R}\left(  \mathbf{V}_{1}w\right)  \right\vert
_{\mathbb{C}}+\left\vert \mathcal{I}\left(  \mathbf{V}_{1}w\right)
\right\vert _{\mathbb{C}}=\left\vert T_{f}u\right\vert _{\mathbb{C}%
}+\left\vert T_{1/f}v\right\vert _{\mathbb{C}}.
\end{align*}
From the boundedness of the operators $T_{f}$ and $T_{1/f}$ we have
\[
\max\left\vert \mathbf{V}_{1}w\right\vert \leq M\left(  \max\left\vert
u\right\vert _{\mathbb{C}}+\max\left\vert v\right\vert _{\mathbb{C}}\right)
\]
where $M=\max\left\{  \left\Vert T_{f}\right\Vert ,\left\Vert T_{1/f}%
\right\Vert \right\}  $. \ Since $\left\vert u\right\vert _{\mathbb{C}}%
\leq\frac{1}{2}\left(  \left\vert w^{+}\right\vert _{\mathbb{C}}+\left\vert
w^{-}\right\vert _{\mathbb{C}}\right)  $ and $\left\vert v\right\vert
_{\mathbb{C}}\leq\frac{1}{2}\left(  \left\vert w^{+}\right\vert _{\mathbb{C}%
}+\left\vert w^{-}\right\vert _{\mathbb{C}}\right)  $, we obtain
$\max\left\vert \mathbf{V}_{1}w\right\vert \leq2M\max\left\vert w\right\vert
$. The boundedness of $\mathbf{V}_{2}$ is proved analogously.

The inverse operators $\mathbf{V}_{1}^{-1}$ and $\mathbf{V}_{2}^{-1}$ have the
form
\[
\mathbf{V}^{-1}_{1}=T^{-1}_{f}\mathcal{R}+\mathbf{j}T^{-1}_{1/f}%
\mathcal{I}\qquad\text{and}\qquad\mathbf{V}^{-1}_{2}=T^{-1}_{1/f}%
\mathcal{R}+\mathbf{j}T^{-1}_{f}\mathcal{I}%
\]
and clearly are bounded. For the explicit formulae for $T^{-1}_{f}$ and
$T^{-1}_{1/f}$ we refer to \cite[Theorem 10]{KrT2012}.

The following statement establishes a relation between the derivatives of
hyperbolic analytic functions and the generalized derivatives of their images
under the action of the transmutation operator.

\begin{proposition}
\label{PropDerivatives}Let $w$ be a hyperbolic analytic function in
$\mathbf{R}$ and $W=\mathbf{V}_{1}w$ be a corresponding solution of
\eqref{Vekua main hyper}. Then whenever the corresponding derivative of $w$
exists there exists a derivative of the same order in the sense of Bers of the
function $W$ and vice versa, and the following relations are valid
\begin{equation}
\mathbf{V}_{1}\left(  \partial_{z}^{(2n)}w\right)  =W^{\left[  2n\right]
}\quad\text{and\quad}\mathbf{V}_{2}\left(  \partial_{z}^{(2n-1)}w\right)
=W^{\left[  2n-1\right]  }\text{,\quad}n=1,2,\ldots
.\label{Relations for derivatives}%
\end{equation}
\end{proposition}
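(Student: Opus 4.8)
The plan is to prove the two relations in \eqref{Relations for derivatives} by induction on $n$, bootstrapping from the first-order identities already established in Proposition \ref{Prop Transm dz}. The crucial observation is that the operators $\partial_{z}-\frac{\partial_{z}f}{f}C$ and $\partial_{z}+\frac{\partial_{\overline{z}}f}{f}C$ appearing on the left-hand sides of the last two identities of Proposition \ref{Prop Transm dz} are precisely the operators producing the Bers derivative in our special periodic setting. Indeed, recall that for a solution $W$ of \eqref{Vekua main x} (equivalently \eqref{Vekua main hyper} with $f=f(x)$) its $(F,G)$-derivative with $(F,G)=(f,\mathbf{j}/f)$ is again a solution, this time of \eqref{Vekua succ}, and the next derivative uses $(F_{1},G_{1})=(1/f,f\mathbf{j})$, and so on with period two. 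First I would verify the purely algebraic fact that for $W$ satisfying \eqref{Vekua main x} one has
\[
\overset{\circ}{W}=\left(\partial_{z}-\frac{f^{\prime}(x)}{2f(x)}C\right)W,
\]
and that for a solution $w$ of \eqref{Vekua succ} the next Bers derivative equals $\left(\partial_{z}+\frac{f^{\prime}(x)}{2f(x)}C\right)w$; these follow by writing $W=\varphi F+\psi G$ and comparing with the definition of the $(F,G)$-derivative, exactly as in the computation preceding \eqref{nth Der}.

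The base case $n=1$: apply the third identity of Proposition \ref{Prop Transm dz} to obtain $\left(\partial_{z}-\frac{\partial_{z}f}{f}C\right)\mathbf{V}_{1}w=\mathbf{V}_{2}(\partial_{z}w)$. Since $W=\mathbf{V}_{1}w$ solves \eqref{Vekua main hyper} and the left-hand side is, by the algebraic fact just recorded, exactly $W^{[1]}=\overset{\circ}{W}$, we get $\mathbf{V}_{2}(\partial_{z}w)=W^{[1]}$, which is the second relation in \eqref{Relations for derivatives} for $n=1$. Now $\mathbf{V}_{2}(\partial_{z}w)$ is a solution of \eqref{Vekua successor} (by the corollary to Proposition \ref{Prop Transm dz}), i.e. of \eqref{Vekua succ}; applying the fourth identity of Proposition \ref{Prop Transm dz} to $\partial_{z}w$ gives $\left(\partial_{z}+\frac{\partial_{\overline{z}}f}{f}C\right)\mathbf{V}_{2}(\partial_{z}w)=\mathbf{V}_{1}(\partial_{z}^{2}w)$, and the left-hand side is now the Bers derivative of the solution $W^{[1]}$ of \eqref{Vekua succ}, i.e. $W^{[2]}$. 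Hence $\mathbf{V}_{1}(\partial_{z}^{2}w)=W^{[2]}$, the first relation for $n=1$.

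For the inductive step, assuming $\mathbf{V}_{1}(\partial_{z}^{2n}w)=W^{[2n]}$, one notes that $W^{[2n]}$ solves \eqref{Vekua main hyper} again (periodicity of the generating sequence, period two), so applying the third identity of Proposition \ref{Prop Transm dz} to $\partial_{z}^{2n}w$ yields $W^{[2n+1]}=\mathbf{V}_{2}(\partial_{z}^{2n+1}w)$, and then the fourth identity applied to $\partial_{z}^{2n+1}w$ yields $W^{[2n+2]}=\mathbf{V}_{1}(\partial_{z}^{2n+2}w)$; this closes the induction. The ``vice versa'' direction follows from the invertibility and boundedness of $\mathbf{V}_{1}$ and $\mathbf{V}_{2}$: given that $W^{[k]}$ exists, one recovers $\partial_{z}^{k}w$ as $\mathbf{V}_{1}^{-1}W^{[k]}$ or $\mathbf{V}_{2}^{-1}W^{[k]}$ and checks it is the ordinary $k$-th $z$-derivative of $w$ by running the same chain of identities backwards, using that $\mathbf{V}_{1},\mathbf{V}_{2}$ are bijections intertwining the relevant operators.

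I expect the main obstacle to be purely bookkeeping rather than conceptual: keeping straight which of the four intertwining identities of Proposition \ref{Prop Transm dz} applies at each stage (the $\partial_{z}$ versus $\partial_{\overline{z}}$ versions, and the sign and $f$ versus $1/f$ alternation dictated by the period-two generating sequence), and confirming that at every even step the relevant function genuinely solves \eqref{Vekua main hyper} while at every odd step it solves \eqref{Vekua succ}, so that the operator $\partial_{z}\mp\frac{f'}{2f}C$ one is applying really is the Bers derivative at that step. A careful statement of the algebraic lemma identifying $\overset{\circ}{(\cdot)}$ with $\partial_{z}\mp\frac{f'}{2f}C$ on the appropriate solution class, extended from the scalar case in \cite{APFT} to the bicomplex case exactly as indicated before \eqref{nth Der}, removes this obstacle.
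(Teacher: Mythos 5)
Your proof is correct and follows essentially the same route as the paper's: both bootstrap from the intertwining identities of Proposition \ref{Prop Transm dz}, identify the operators $\partial_{z}\mp\frac{f'}{2f}C$ with the Bers derivative on the appropriate solution class (equivalently, with $\mathbf{j}\partial_{t}$ as in \eqref{nth Der}), and close the argument by induction using the period-two alternation between $\mathbf{V}_{1}$ and $\mathbf{V}_{2}$. Your explicit algebraic lemma and your handling of the ``vice versa'' direction via the invertibility of $\mathbf{V}_{1}$ and $\mathbf{V}_{2}$ simply spell out steps the paper leaves implicit.
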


\begin{proof}
From Proposition \ref{Prop Transm dz} we have%
\begin{equation}
\overset{\circ}{W}=\mathbf{V}_{2}\left(  \partial_{z}w\right)  . \label{45}%
\end{equation}
$\overset{\circ}{W}$ is a solution of the succeeding Vekua equation
(\ref{Vekua successor}). Denote $W_{1}=\overset{\circ}{W}$. Any solution of
(\ref{Vekua successor}) is the image of a bicomplex analytic function under
the action of the operator $\mathbf{V}_{2}$, so $W_{1}=\mathbf{V}_{2}w_{1}$.
Due to Proposition \ref{Prop Transm dz} we have $\overset{\circ}{W}%
_{1}=\mathbf{V}_{1}\left(  \partial_{z}w_{1}\right)  $. Thus, $\overset
{\circ\circ}{W}=\mathbf{V}_{1}\left(  \partial_{z}^{2}w\right)  $ because from
(\ref{45}) $w_{1}=\partial_{z}w$. Now (\ref{Relations for derivatives}) can be
easily proved by induction.
\end{proof}

\section{Expansion theorem}\label{SectETh}

\begin{theorem}
\label{Th W series} A solution $W$ of \eqref{Vekua main x} can be represented
as a uniformly convergent series
\begin{equation}
W(z)=u(z)+\mathbf{j}v(z)=\sum_{n=0}^{\infty} Z^{(n)}(a_{n},0;z)
\label{W=Taylor}%
\end{equation}
in $\overline{\mathbf{R}}$ iff the functions $u(x,x)$ and $u(x,-x)$ admit the
uniformly convergent series expansions on $[-b,b]$ of the form \eqref{+1} and
\eqref{-1} where the coefficients of the expansions are related by the
equalities
\[
a_{n}=c_{n}+\mathbf{j}b_{n},\quad n=0,1,\ldots
\]
and the coefficient $b_{0}$ defines the value of $v$ at the origin.
\end{theorem}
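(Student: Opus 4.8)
The plan is to transport the statement through the transmutation operator $\mathbf{V}_{1}$ of \eqref{Tbold1} and reduce it to the description of uniformly convergent power‑series solutions of the hyperbolic Cauchy--Riemann system \eqref{hCR} obtained in Section~\ref{SectHPFT}. Put $w:=\mathbf{V}_{1}^{-1}W$. Since $W$ solves \eqref{Vekua main x}, i.e.\ $\bigl(\partial_{\bar z}-\tfrac{\partial_{\bar z}f}{f}C\bigr)W=0$, the first identity of Proposition~\ref{Prop Transm dz} gives $\mathbf{V}_{2}(\partial_{\bar z}w)=0$, and the invertibility of $\mathbf{V}_{2}$ forces $\partial_{\bar z}w=0$, so $w$ satisfies \eqref{hCR}. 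Because $\mathbf{V}_{1}$ and $\mathbf{V}_{1}^{-1}$ are bounded on continuous functions and, by Proposition~\ref{PropMapComplexPowers}, $\mathbf{V}_{1}[a_{n}z^{n}]=Z^{(n)}(a_{n},0;z)$, the series \eqref{W=Taylor} converges uniformly in $\overline{\mathbf{R}}$ if and only if $w=\sum_{n=0}^{\infty}a_{n}z^{n}$ uniformly in $\overline{\mathbf{R}}$. Hence it is enough to characterize when this power series represents $w$ and to match the coefficients.

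For the necessity part I would just expand the formal powers. Writing $a_{n}=c_{n}+\mathbf{j}b_{n}$ with $c_{n},b_{n}\in\mathbb{C}$ and taking $\mathcal{R}$ of \eqref{W=Taylor}, formulae \eqref{ZasWavePol0}, \eqref{ZasWavePol} yield $u=\mathcal{R}(W)=c_{0}u_{0}+\sum_{n\ge1}(c_{n}u_{2n-1}+b_{n}u_{2n})$ uniformly in $\overline{\mathbf{R}}$. Restricting to the characteristics $t=\pm x$, using the parity relations \eqref{umParity}, and then adding and subtracting produces exactly \eqref{+1} and \eqref{-1}, each uniformly convergent on $[-b,b]$. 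Finally, by the asymptotic formula $Z^{(n)}(a_{n},0;z)\sim a_{n}z^{n}$ every term with $n\ge1$ vanishes at $z=0$, so \eqref{ZasWavePol0} gives $v(0)=\mathcal{I}(W)(0)=b_{0}$.

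For the sufficiency part, which is the heart of the theorem, I would proceed as follows. Assuming \eqref{+1}, \eqref{-1}, the component $u=\mathcal{R}(W)$ solves the Klein--Gordon equation \eqref{KG1}, so Proposition~\ref{Prop Unif Conv u} upgrades \eqref{+1}, \eqref{-1} to uniform convergence of $u=c_{0}u_{0}+\sum_{n\ge1}(c_{n}u_{2n-1}+b_{n}u_{2n})$ in $\overline{\mathbf{R}}$, and, by \eqref{phipsi} in its proof, it also shows that $T_{f}^{-1}u=\mathcal{R}(w)$ restricted to $t=\pm x$ equals the uniformly convergent power series $c_{0}+\sum_{n\ge1}2^{n-1}(c_{n}\pm b_{n})x^{n}$ on $[-b,b]$. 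Writing $w$ in the form \eqref{gensolhCR}, $w=P^{+}\Phi\bigl(\tfrac{x+t}{2}\bigr)+P^{-}\Psi\bigl(\tfrac{x-t}{2}\bigr)$, one has $\mathcal{R}(w)=\tfrac12(\Phi+\Psi)$ and $\mathcal{I}(w)=\tfrac12(\Phi-\Psi)$; setting $b_{0}:=\mathcal{I}(w)(0,0)=v(0)$ (legitimate since $T_{1/f}$ fixes the value at the origin) gives $\Psi(0)=c_{0}-b_{0}$, $\Phi(0)=c_{0}+b_{0}$, hence $\Phi(x)=2\mathcal{R}(w)(x,x)-(c_{0}-b_{0})$ and $\Psi(x)=2\mathcal{R}(w)(x,-x)-(c_{0}+b_{0})$, which are uniformly convergent power series $\sum\alpha_{n}x^{n}$, $\sum\beta_{n}x^{n}$ with $\alpha_{0}=c_{0}+b_{0}$, $\beta_{0}=c_{0}-b_{0}$ and $\alpha_{n}=2^{n}(c_{n}+b_{n})$, $\beta_{n}=2^{n}(c_{n}-b_{n})$ for $n\ge1$. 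Proposition~\ref{Prop Goursat polynom} (applied on the diamond $\overline R\supset\overline{\mathbf{R}}$, to which \eqref{gensolhCR} extends $w$) then yields $w=\sum_{n=0}^{\infty}a_{n}z^{n}$, uniformly, with $a_{n}=2^{-n}(P^{+}\alpha_{n}+P^{-}\beta_{n})=c_{n}+\mathbf{j}b_{n}$; applying $\mathbf{V}_{1}$ returns \eqref{W=Taylor}.

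The main obstacle is exactly this last direction: the hypotheses control only the Goursat data of the single scalar component $u=\mathcal{R}(W)$, so one must both propagate them to honest two‑variable uniform convergence over $\overline{\mathbf{R}}$ — the role of Proposition~\ref{Prop Unif Conv u}, i.e.\ ultimately of the boundedness of the Goursat‑to‑Goursat operator $T_{G}$ and of $T_{f}$ — and reconstruct the ``missing'' imaginary part of $w$, which is what feeds in the extra constant $b_{0}=v(0)$ and accounts for $\Phi(0)\ne\Psi(0)$ in general. The ensuing coefficient identity $a_{n}=2^{-n}(P^{+}\alpha_{n}+P^{-}\beta_{n})=c_{n}+\mathbf{j}b_{n}$ is a routine computation with the idempotents $P^{\pm}$.
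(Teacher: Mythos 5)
Your proof is correct and follows essentially the same route as the paper's: transport through $\mathbf{V}_{1}$ to the hyperbolic Cauchy--Riemann system, invoke Proposition \ref{Prop Unif Conv u} and the shifted Goursat data $\Phi=2\varphi-c_{0}+b_{0}$, $\Psi=2\psi-c_{0}-b_{0}$, apply Proposition \ref{Prop Goursat polynom}, and map back with $\mathbf{V}_{1}$. Your write-up is in fact slightly more careful than the paper's at one point --- you pin $\Phi,\Psi$ to the actual $w=\mathbf{V}_{1}^{-1}W$ and fix $b_{0}=v(0)$ explicitly, so the constructed series represents the given $W$ rather than merely some solution with the same real part --- but this is a refinement of the same argument, not a different one.
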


\begin{proof}
Suppose that $W$ is a solution of (\ref{Vekua main x}) such that
(\ref{W=Taylor}) holds. Denote $c_{n}:=\mathcal{R}(a_{n})$ and $b_{n}%
:=\mathcal{I}(a_{n})$, $n=0,1,\ldots$. By \eqref{ZasWavePol} we have
\[
Z^{(n)}(1,0;z)=u_{2n-1}(x,t)+\mathbf{j}v_{2n}(x,t),\quad n=1,2,\ldots
\]
and
\[
Z^{(n)}(\mathbf{j},0;z) =u_{2n}(x,t)+\mathbf{j}v_{2n-1}(x,t),\quad
n=1,2,\ldots.
\]
Then
\[
W(z)=c_{0}u_{0}(x,t)+b_{0}\mathbf{j}v_{0}(x,t)+\sum_{n=1}^{\infty}\left(
c_{n}\left(  u_{2n-1}(x,t)+\mathbf{j}v_{2n}(x,t)\right)  +b_{n}\left(
u_{2n}(x,t)+\mathbf{j}v_{2n-1}(x,t)\right)  \right)  .
\]
Hence
\[
u:=\mathcal{R}(W)=c_{0}u_{0}+\sum_{n=1}^{\infty}\left(  c_{n}u_{2n-1}%
+b_{n}u_{2n}\right)
\]
and
\[
v:=\mathcal{I}(W)=b_{0}v_{0}+\sum_{n=1}^{\infty}\left(  c_{n}v_{2n}%
+b_{n}v_{2n-1}\right)  .
\]
Thus,
\[
u(x,x)=c_{0}u_{0}(x,x)+\sum_{n=1}^{\infty}\left(  c_{n}u_{2n-1}(x,x)+b_{n}%
u_{2n}(x,x)\right)
\]
and
\[
u(x,-x)=c_{0}u_{0}(x,x)+\sum_{n=1}^{\infty}\left(  c_{n}u_{2n-1}%
(x,x)-b_{n}u_{2n}(x,x)\right)  .
\]
We obtain that (\ref{+1}) and (\ref{-1}) hold.

Now let us assume that $W$ is a solution of (\ref{Vekua main x}) in
$\overline{\mathbf{R}}$ such that for $u=\mathcal{R}(W)$ the equalities
(\ref{+1}) and (\ref{-1}) hold. Then due to Proposition \ref{Prop Unif Conv u}
the function $u$ admits a series expansion (\ref{u}), uniformly convergent in
$\overline{\mathbf{R}}$ and at the same time $\widetilde{u}=T_{f}^{-1}u$ is a
solution of a Goursat problem for the wave equation with the Goursat data
$\binom{\varphi}{\psi}$ defined by (\ref{phipsi}). $\widetilde{u}$ has the
form (\ref{util}).

Now, let us consider the Goursat problem (\ref{hCR}), (\ref{condG}) with
\[
\Phi(x):=2\varphi(x)-c_{0}+b_{0}\quad\text{and}\quad\Psi(x):=2\psi
(x)-c_{0}-b_{0}%
\]
where $b_{0}\in\mathbb{C}$ is an arbitrary number. Thus,
\[
\Phi(x)=\sum_{n=0}^{\infty}2^{n}\left(  c_{n}+b_{n}\right)  x^{n}%
\quad\text{and}\quad\Psi(x)=\sum_{n=0}^{\infty}2^{n}\left(  c_{n}%
-b_{n}\right)  x^{n}.
\]
Due to Proposition \ref{Prop Goursat hCR}, the unique solution of this Goursat
problem has the form $\widetilde{W}(z)=\sum\limits_{n=0}^{\infty}a_{n}z^{n}$
where
\[
a_{n}=P^{+}\left(  c_{n}+b_{n}\right)  +P^{-}\left(  c_{n}-b_{n}\right)
=c_{n}+\mathbf{j}b_{n},\quad n=0,1,\ldots.
\]
Application of the operator $\mathbf{V}_{1}$ to $\widetilde{W}$ gives us a
solution $W$ of (\ref{Vekua main x}) in $\overline{\mathbf{R}}$ in the form of
a uniformly convergent series (\ref{W=Taylor}) with $u=\mathcal{R}(W)$.
\end{proof}

\begin{theorem}
\label{Th Taylor Vekua} Let $W$ be a solution of \eqref{Vekua main x}
admitting in $\overline{\mathbf{R}}$ a uniformly convergent series expansion
\eqref{W=Taylor}. Then there exist its derivatives in the sense of Bers of any
order and the Taylor formula for the expansion coefficients holds,
\begin{equation}
a_{n}=\frac{W^{[n]}(0)}{n!}. \label{TaylorCoef}%
\end{equation}
\end{theorem}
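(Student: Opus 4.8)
The plan is to reduce everything to the flat case via the transmutation operator $\mathbf{V}_1$, where the corresponding Taylor formula has already been established in the Remark following Proposition~\ref{Prop Goursat polynom}. First I would observe that, by the proof of Theorem~\ref{Th W series}, a uniformly convergent expansion \eqref{W=Taylor} of $W$ corresponds via $\mathbf{V}_1$ to a uniformly convergent power series $\widetilde{W}(z)=\sum_{n=0}^{\infty}a_{n}z^{n}$ solving the hyperbolic Cauchy--Riemann system \eqref{hCR} in $\overline{\mathbf{R}}$, with the \emph{same} coefficients $a_{n}$; indeed Proposition~\ref{PropMapComplexPowers} gives $\mathbf{V}_{1}[a_{n}z^{n}]=Z^{(n)}(a_{n},0;z)$, and the uniform convergence is transported back through the bounded inverse $\mathbf{V}_{1}^{-1}$.

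Next I would invoke the Remark after Proposition~\ref{Prop Goursat polynom}, which states precisely that for such a $\widetilde{W}$ the coefficients are recovered by $a_{n}=\widetilde{W}^{[n]}(0)/n!$, where $\widetilde{W}^{[n]}$ denotes the $n$-th Bers derivative of $\widetilde{W}$ with respect to the trivial generating pair (which for $f\equiv 1$ is just $\mathbf{j}^{n}\partial_{t}^{n}$, by \eqref{nth Der}). So it remains only to connect $\widetilde{W}^{[n]}(0)$ with $W^{[n]}(0)$. This is exactly what Proposition~\ref{PropDerivatives} provides: the relations \eqref{Relations for derivatives} say $\mathbf{V}_{1}(\partial_{z}^{(2n)}\widetilde{W})=W^{[2n]}$ and $\mathbf{V}_{2}(\partial_{z}^{(2n-1)}\widetilde{W})=W^{[2n-1]}$, together with the fact, guaranteed by the same proposition, that all Bers derivatives of $W$ exist because all ordinary derivatives of the hyperbolic-analytic $\widetilde{W}$ exist (the power series \eqref{power series} can be differentiated termwise). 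Since $\partial_{z}^{(n)}\widetilde{W}$ is, up to the factor $\mathbf{j}^{n}$, equal to $\widetilde{W}^{[n]}$, and since both $\mathbf{V}_{1}$ and $\mathbf{V}_{2}$ fix constants appropriately — more precisely, evaluating at $z=0$ and using $T_{f}[1]=f$, $f(0)=1$, $T_{1/f}[1]=1/f$, $(1/f)(0)=1$, one checks $\mathbf{V}_{1}c|_{0}=\mathbf{V}_{2}c|_{0}=c$ for any $c\in\mathbb{B}$ — we get $W^{[n]}(0)=\widetilde{W}^{[n]}(0)=n!\,a_{n}$, which is \eqref{TaylorCoef}.

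I expect the main obstacle to be the careful bookkeeping at the origin: one must verify that the operators $\mathbf{V}_{1}$ and $\mathbf{V}_{2}$ both act as the identity on the \emph{values at $z=0$} (though not, of course, as the identity overall), so that the leading asymptotic term $a_{n}z^{n}$ of $Z^{(n)}(a_{n},0;z)$ — recorded in property~4 after Definition~\ref{DefFormalPower_bi} — really does contribute $n!\,a_{n}$ to the $n$-th Bers derivative at $0$, while all higher-order formal powers $Z^{(m)}(a_{m},0;z)$ with $m>n$ vanish to order $m$ at $0$ and hence do not contribute. An alternative, perhaps cleaner route that sidesteps the chain of propositions is to argue directly from \eqref{nth Der}: since $W$ solves \eqref{Vekua main x} we have $W^{[n]}=\mathbf{j}^{n}\partial_{t}^{n}W$, so differentiating the series \eqref{W=Taylor} termwise $n$ times in $t$, evaluating at $z=0$, and using $Z^{(m)}(a_{m},0;z)\sim a_{m}z^{m}$ together with the explicit binomial form \eqref{Znodd}--\eqref{Zneven} (whose lowest-$t$-degree, lowest-$x$-degree term is exactly $a_{m}x^{m}$, killed by $\partial_{t}^{n}|_{0}$ unless one picks up the $t^{n}$-term, which requires $m\ge n$ and for $m=n$ gives coefficient $a_{n}$) yields $\partial_{t}^{n}W(0)=n!\,\mathbf{j}^{-n}a_{n}$ directly. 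Either way the termwise differentiation of the uniformly convergent series needs a word of justification, most economically by passing through $\mathbf{V}_{1}^{-1}$ to the genuine power series \eqref{power series}, for which termwise differentiation inside $\overline{\mathbf{R}}$ is classical.
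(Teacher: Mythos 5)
Your proposal is correct and follows essentially the same route as the paper: pull $W$ back through $\mathbf{V}_{1}^{-1}$ to the genuine power series $\sum a_{n}z^{n}$ (whose Taylor coefficients are given by the Remark after Proposition~\ref{Prop Goursat polynom}), justify termwise differentiation there, and transport the derivatives forward via Proposition~\ref{PropDerivatives} applied with $\mathbf{V}_{1}$ to even and $\mathbf{V}_{2}$ to odd orders. Your explicit check that $\mathbf{V}_{1}$ and $\mathbf{V}_{2}$ act as the identity on values at the origin is a detail the paper leaves implicit, but otherwise the arguments coincide.
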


\begin{proof}
Under the condition of the theorem we obtain that $w=\mathbf{V}_{1}^{-1}W$ has
the form (\ref{power series}) with the expansion coefficients
(\ref{Taylor coefs}). The derivatives of $w$ of any order exist because they
reduce to the differentiation of power series expansions of $\Phi$ and $\Psi$
from Proposition \ref{Prop Goursat hCR}. Application of the operator
$\mathbf{V}_{1}$ to the even derivatives and of $\mathbf{V}_{2}$ to the odd,
due to Proposition \ref{PropDerivatives} gives us the corresponding
derivatives of $W$ in the sense of Bers as well as the formula
(\ref{TaylorCoef}).
\end{proof}

Even if the conditions of Theorem \ref{Th W series} are not fulfilled and a
solution of (\ref{Vekua main x}) cannot be represented globally as a uniformly
convergent formal power series of the form (\ref{W=Taylor}), it may be
arbitrarily closely approximated by finite linear combinations of the formal
powers. The following analogue of the Runge approximation theorem from
classical complex analysis is a corollary of the existence of the
transmutation operator $\mathbf{V}_{1}$.

\begin{proposition}
\label{Prop RungeZ} Let $W$ be a solution of \eqref{Vekua main x} in
$\mathbf{R}$. Then there exists a sequence of polynomials in formal powers
$P_{N}=\sum_{n=0}^{N}Z^{(n)}(a_{n}, 0; z)$ uniformly convergent to $W$ in
$\overline{\mathbf{R}}$.
\end{proposition}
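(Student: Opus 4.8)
The plan is to deduce the assertion from Proposition \ref{Prop RungeHCR} by conjugating with the transmutation operator $\mathbf{V}_{1}$. First I would set $w:=\mathbf{V}_{1}^{-1}W$. By Proposition \ref{Prop Transm dz} the operator $\mathbf{V}_{1}$ intertwines $\partial_{\bar{z}}$ with the operator $\partial_{\bar{z}}-\frac{\partial_{\bar{z}}f}{f}C$ of \eqref{Vekua main hyper} (equivalently \eqref{Vekua main x}); since $\mathbf{V}_{2}$ is invertible, the relation $\left(\partial_{\bar{z}}-\frac{\partial_{\bar{z}}f}{f}C\right)\mathbf{V}_{1}w=\mathbf{V}_{2}\left(\partial_{\bar{z}}w\right)$ shows that $W=\mathbf{V}_{1}w$ solves \eqref{Vekua main x} if and only if $\partial_{\bar{z}}w=0$, i.e.\ $w$ is a hyperbolic analytic function satisfying \eqref{hCR} in $\mathbf{R}$. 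Moreover $w$ inherits from $W$ the regularity needed below, because $\mathbf{V}_{1}^{-1}=T_{f}^{-1}\mathcal{R}+\mathbf{j}\,T_{1/f}^{-1}\mathcal{I}$ is bounded on $C(\overline{\mathbf{R}})$, being built from the Volterra operators $T_{f}^{-1}$, $T_{1/f}^{-1}$.

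Next I would apply Proposition \ref{Prop RungeHCR} to $w$: there is a sequence of ordinary polynomials $\widetilde{P}_{N}(z)=\sum_{n=0}^{N}a_{n}z^{n}$ converging to $w$ uniformly on $\overline{\mathbf{R}}$. Although Proposition \ref{Prop RungeHCR} is phrased for the square $R$ of Section \ref{SectHPFT}, its proof uses only the general representation \eqref{gensolhCR} together with the Weierstrass theorem applied to the characteristic data on $[-b,b]$, so it carries over verbatim to the characteristic square $\mathbf{R}$.

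Finally I would push the approximation forward through $\mathbf{V}_{1}$. Since $\mathbf{V}_{1}$ is bounded on $C(\overline{\mathbf{R}})$ (verified just before Proposition \ref{PropDerivatives}), $\mathbf{V}_{1}\widetilde{P}_{N}\to\mathbf{V}_{1}w=W$ uniformly on $\overline{\mathbf{R}}$; and by Proposition \ref{PropMapComplexPowers} one has $\mathbf{V}_{1}\widetilde{P}_{N}=\sum_{n=0}^{N}\mathbf{V}_{1}[a_{n}z^{n}]=\sum_{n=0}^{N}Z^{(n)}(a_{n},0;z)=:P_{N}$, which is precisely the desired formal polynomial. This gives the claimed sequence.

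The step I expect to be the main obstacle is the first one: guaranteeing that $w=\mathbf{V}_{1}^{-1}W$ has enough regularity on the \emph{closed} square $\overline{\mathbf{R}}$ to feed into Proposition \ref{Prop RungeHCR}, given that the potential is only continuous. The natural resolution is to note that the Runge argument for \eqref{hCR} in fact needs only \emph{continuity} of the characteristic data $\Phi,\Psi$ of $w$ (Weierstrass approximation requires no derivatives), and this continuity up to $\overline{\mathbf{R}}$ follows from the boundedness of $\mathbf{V}_{1}^{-1}$ on $C(\overline{\mathbf{R}})$; so no $C^{1}$ hypothesis on $w$ is really required.
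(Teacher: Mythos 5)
Your proposal is correct and follows essentially the same route as the paper's proof: set $w=\mathbf{V}_{1}^{-1}W$, use Proposition \ref{Prop Transm dz} to see that $w$ solves \eqref{hCR}, apply the Runge argument of Proposition \ref{Prop RungeHCR} on the smaller square $\overline{\mathbf{R}}$, and push the approximating polynomials forward through the bounded operator $\mathbf{V}_{1}$ via Proposition \ref{PropMapComplexPowers}. Your closing remark about only needing continuity of the characteristic data is a reasonable elaboration of the paper's brief assertion that the proof of Proposition \ref{Prop RungeHCR} "may be applied without changes."
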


\begin{proof}
Consider $w = \mathbf{V}_{1}^{-1}W$. By Proposition \ref{Prop Transm dz} we
have $\mathbf{V}_{2}(\partial_{\bar z} w) = 0$, hence $w$ is a solution of
\eqref{hCR} in $\overline{\mathbf{R}}$. Despite the domain $\overline
{\mathbf{R}}$ is smaller than the domain $\overline{R}$, the proof of
Proposition \ref{Prop RungeHCR} may be applied without changes to the function
$w$. Hence for an arbitrary $\varepsilon>0$ there exists a polynomial
$p_{N}=\sum_{n=0}^{N} a_{n}z^{n}$ such that for any
$(x,t)\in\overline{\mathbf{R}}$
\[
|w(x,t) - p_{N}(x,t)|<\frac{\varepsilon}{\|\mathbf{V}_{1}\|},
\]
where $\|\mathbf{V}_{1}\|<\infty$ is the norm of the transmutation operator
$\mathbf{V}_{1}$. Applying $\mathbf{V}_{1}$ and Proposition
\ref{PropMapComplexPowers} we obtain that
\[
\left|  W(x,t) - \sum_{n=0}^{N} Z^{(n)}(a_{n},0;z)\right|
\le\|\mathbf{V}_{1}\|\cdot\max_{(x,t)\in\overline{\mathbf{R}}}|w(x,t) -
p_{N}(x,t)| <\varepsilon.
\]
\end{proof}

\section{Integral kernels as solutions of Vekua equations}\label{SectKernelAsSolVekua}
In Section \ref{SectTransmutVekua} we showed that
combinations \eqref{Tbold1} and \eqref{Tbold2} of the transmutation operators
$T_{f}$ and $T_{1/f}$ are related with Vekua operators and formal powers. In
this section we consider the combination
\begin{equation}
\label{KVekua}\mathbf{K} = \mathbf{K}_{f} - \mathbf{j} \mathbf{K}_{1/f}%
\end{equation}
of the integral kernels of the operators $T_{f}$ and $T_{1/f}$.

\begin{theorem}
\label{Th K Vekua} The function $\mathbf{K}$ given by \eqref{KVekua} is a
solution of the hyperbolic Vekua equation \eqref{Vekua main x}.
\end{theorem}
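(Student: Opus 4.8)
The plan is to verify directly that $\mathbf{K}=\mathbf{K}_{f}-\mathbf{j}\mathbf{K}_{1/f}$ satisfies the system \eqref{Vek1}, \eqref{Vek2} (equivalently \eqref{Vekua main x}) on the natural domain $|t|\le|x|\le b$, using the explicit relations \eqref{K2}, \eqref{K1} between $\mathbf{K}_{f}$ and $\mathbf{K}_{1/f}$ from Proposition \ref{PropTT_D}. Writing $u=\mathbf{K}_{f}$ and $v=-\mathbf{K}_{1/f}$, I need to check that $f\,\partial_{x}(u/f)=\partial_{t}v$ and $f^{-1}\partial_{x}(fv)=\partial_{t}u$. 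First I would compute $\partial_{t}\mathbf{K}_{1/f}$ by differentiating \eqref{K2} with respect to $t$: the integral $\int_{-t}^{x}\partial_{t}\mathbf{K}_{f}(s,t)f(s)\,ds$ contributes both a boundary term from the lower limit $s=-t$ and an interior term $\int_{-t}^{x}\partial_{t}^{2}\mathbf{K}_{f}(s,t)f(s)\,ds$; here I would invoke the Goursat equation \eqref{GoursatKh1}, $\partial_{t}^{2}\mathbf{K}_{f}=(\partial_{x}^{2}-q)\mathbf{K}_{f}$, to turn $\partial_{t}^{2}$ into $\partial_{x}^{2}$, then integrate by parts in $s$ twice, using $q=f''/f$ so that the terms containing $q f$ cancel against $f''$ terms. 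Similarly $\partial_{x}(\mathbf{K}_{f}/f)$ is immediate from \eqref{K2} since the $x$-dependence of $\mathbf{K}_{1/f}$ enters only through the prefactor $1/f(x)$ and the upper limit of the integral.

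The second equation $f^{-1}\partial_{x}(f v)=\partial_{t}u$, i.e. $-f^{-1}\partial_{x}(f\,\mathbf{K}_{1/f})=\partial_{t}\mathbf{K}_{f}$, is the cleanest one: from \eqref{K2}, $f(x)\mathbf{K}_{1/f}(x,t)=-\left(\int_{-t}^{x}\partial_{t}\mathbf{K}_{f}(s,t)f(s)\,ds+\tfrac{f'(0)}{2}f(-t)\right)$, whose $\partial_{x}$ is exactly $-\partial_{t}\mathbf{K}_{f}(x,t)f(x)$, and dividing by $f$ gives the claim directly. So the real work is the first equation. An alternative, possibly slicker route is to avoid the kernel-level computation altogether and argue at the operator level: one knows $\mathbf{V}_{1}=T_{f}\mathcal{R}+\mathbf{j}T_{1/f}\mathcal{I}$ maps solutions of \eqref{hCR} to solutions of \eqref{Vekua main hyper} (Proposition \ref{Prop Transm dz}), and the Goursat-to-Goursat construction of Section \ref{SectGGTO} together with the representation of $\mathbf{K}_{f}$, $\mathbf{K}_{1/f}$ via the Goursat problems \eqref{GoursatKh1}--\eqref{GoursatKh2} should let one identify $\mathbf{K}$ as the image under a suitable transmutation of a fixed hyperbolic-analytic function (the kernel of the trivial case), hence automatically a solution of \eqref{Vekua main x}. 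However, I expect the cleanest rigorous argument to be the direct differentiation.

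The main obstacle is bookkeeping the boundary terms: since $\mathbf{K}_{f}$ and $\mathbf{K}_{1/f}$ are a priori only defined for $|t|\le|x|\le b$, the point $(s,t)=(-t,t)$ lies on the boundary characteristic, and one must use the Goursat conditions \eqref{GoursatKh2}, namely $\mathbf{K}_{f}(x,-x)=h/2$ with $h=f'(0)$, together with their derivatives along the characteristic, to evaluate the boundary contributions correctly; getting these to cancel with the $\tfrac{f'(0)}{2}f(-t)$ term is where the argument could go wrong. A secondary technical point is regularity: the direct computation needs $\mathbf{K}_{f}\in C^{2}$, which holds only for $q\in C^{1}$; for general continuous $q$ one then passes to the limit along $q_{n}\to q$ in $C^{1}$, using that $\mathbf{K}_{f,q_{n}}\to\mathbf{K}_{f,q}$ and $\mathbf{K}_{1/f,q_{n}}\to\mathbf{K}_{1/f,q}$ uniformly (as in the proof of Theorem \ref{Th Transmutation}) and that \eqref{Vekua main x} is preserved under uniform limits in the distributional sense. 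I would carry out the computation first for $q\in C^{1}$, record that $\mathbf{K}$ solves \eqref{Vek1}--\eqref{Vek2} pointwise, and then close with the density argument.
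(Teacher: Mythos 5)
Your proposal is correct, and for equation \eqref{Vek2} it coincides with the paper's argument; but you do far more work than necessary for \eqref{Vek1}, because you overlook that Proposition \ref{PropTT_D} supplies \emph{two} formulas, \eqref{K2} and \eqref{K1}, which are exactly symmetric. With $u=\mathbf{K}_f$ and $v=-\mathbf{K}_{1/f}$, you rightly note that applying $\partial_x$ to \eqref{K2}, i.e.\ to $fv=\int_{-t}^{x}\partial_t\mathbf{K}_f(s,t)f(s)\,ds+\frac{f'(0)}{2}f(-t)$, gives $\frac1f\partial_x(fv)=\partial_t u$. The ``real work'' you then undertake is unnecessary: applying $\partial_x$ to \eqref{K1}, i.e.\ to $u/f=-\int_{-t}^{x}\partial_t\mathbf{K}_{1/f}(s,t)f^{-1}(s)\,ds+\frac{f'(0)}{2f(-t)}$, yields $\partial_x(u/f)=-\partial_t\mathbf{K}_{1/f}(x,t)/f(x)=\partial_t v/f$, which is \eqref{Vek1}. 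That two-line observation is the entirety of the paper's proof, and it uses only first derivatives of the kernels, which exist for merely continuous $q$. Your substitute route for \eqref{Vek1} --- $\partial_t$ of \eqref{K2}, the Goursat equation $\partial_t^2\mathbf{K}_f=(\partial_x^2-q)\mathbf{K}_f$, two integrations by parts against $f''=qf$, and cancellation of the characteristic boundary terms via $\mathbf{K}_f(-t,t)=f'(0)/2$ and its tangential derivative $\partial_t\mathbf{K}_f(-t,t)=\partial_s\mathbf{K}_f(-t,t)$ --- does in fact close (it amounts to re-deriving \eqref{K1} from \eqref{K2}), but it costs you the hypothesis $q\in C^1$ and the closing density argument, neither of which the theorem needs. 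Two small inaccuracies: the sentence claiming $\partial_x(\mathbf{K}_f/f)$ is ``immediate from \eqref{K2}'' is off, since \eqref{K2} determines $\mathbf{K}_{1/f}$ rather than $\mathbf{K}_f$; and the uniform convergence of the kernels \emph{together with their first derivatives} along $q_n\to q$, which your limiting step requires, is what is established via the integral-equation reformulation in the proof of Corollary \ref{Cor K Vekua}, not in the proof of Theorem \ref{Th Transmutation}, which gives only uniform convergence of the kernels themselves.
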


\begin{proof}
The proof immediately follows from relations \eqref{K2} and \eqref{K1} and
conditions \eqref{Vek1}, \eqref{Vek2}.
\end{proof}

The way of constructing the integral kernel $\mathbf{K}_{1/f}$ by the integral
kernel $\mathbf{K}_{f}$ given by \eqref{K2} requires the knowledge of the
kernel $\mathbf{K}_{f}(x,t)$ not only on a natural domain $|t|\leq|x|\leq b$
sufficient for defining the transmutation operator, but on a larger domain
$|t|\leq b$, $|x|\leq b$. Pseudoanalytic function theory allows us to obtain
another method of reconstructing $\mathbf{K}_{1/f}$ by the known
$\mathbf{K}_{f}$ and vice versa. Resulted formulae are simpler than those
mentioned in \cite[Remark 21]{KrT2012}.

\begin{corollary}
\label{Cor K Vekua} The integral kernels $\mathbf{K}_{f}$ and $\mathbf{K}%
_{1/f}$ of the transmutation operators $T_{f}$ and $T_{1/f}$ are related by
the expressions
\begin{multline}
\label{K2Vekua}\mathbf{K}_{1/f}(x,t) = -\frac{1}{f(x)} \int_{0}^{x} f(\eta)
\partial_{t} \mathbf{K}_{f}(\eta,0)\,d\eta \\
- \int_{0}^{t} \partial_{x}
\mathbf{K}_{f}(x,\xi)\,d\xi+ \frac{f^{\prime}(x)}{f(x)}\int_{0}^{t}
\mathbf{K}_{f}(x,\xi)\,d\xi- \frac{f^{\prime}(0)}{2f(x)}%
\end{multline}
and
\begin{multline}
\label{K1Vekua}\mathbf{K}_{f}(x,t) = -f(x) \int_{0}^{x} \frac{1}{f(\eta)}
\partial_{t} \mathbf{K}_{1/f}(\eta,0)\,d\eta\\
- \int_{0}^{t} \partial_{x}
\mathbf{K}_{1/f}(x,\xi)\,d\xi- \frac{f^{\prime}(x)}{f(x)}\int_{0}^{t}
\mathbf{K}_{1/f}(x,\xi)\,d\xi+ \frac{f^{\prime}(0)}{2}f(x).
\end{multline}
\end{corollary}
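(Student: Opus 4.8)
The plan is to read off both identities from Theorem~\ref{Th K Vekua} combined with the reconstruction formulae of Theorem~\ref{ThDarboux}, evaluating the potential-from-gradient operator $\overline{A}_h$ along a convenient two-segment path by means of formula~\eqref{ABarKernel}.

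First I would invoke Theorem~\ref{Th K Vekua}: the bicomplex function $\mathbf{K}=\mathbf{K}_f-\mathbf{j}\mathbf{K}_{1/f}$ solves \eqref{Vekua main x} on the natural domain $\Omega=\{(x,t):|t|\le|x|\le b\}$, so, by the theorem preceding Theorem~\ref{ThDarboux}, $u=\mathcal R(\mathbf{K})=\mathbf{K}_f$ is a scalar solution of the Klein--Gordon equation \eqref{KG1} and $v=\mathcal I(\mathbf{K})=-\mathbf{K}_{1/f}$ is an admissible conjugate component. Since $|st|\le|sx|\le|x|\le b$ for $s\in[0,1]$ whenever $|t|\le|x|\le b$, the set $\Omega$ is star-shaped with respect to the origin, hence simply connected, and Theorem~\ref{ThDarboux} applies. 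Its first formula then gives
\[
-\mathbf{K}_{1/f}=-\frac1f\,\overline{A}_h\!\bigl[\mathbf{j}f^{2}\partial_{\bar z}(f^{-1}\mathbf{K}_f)\bigr]+\frac{c}{f}
\]
for some complex constant $c$, while the ``vice versa'' formula gives $\mathbf{K}_f=f\,\overline{A}_h[\mathbf{j}f^{-2}\partial_{\bar z}(f\mathbf{K}_{1/f})]+c'f$ for some constant $c'$ (here I use that $\overline{A}_h$ is $\mathbb{C}$-linear and $v=-\mathbf{K}_{1/f}$).

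Next I would make the argument of $\overline{A}_h$ explicit. Writing $\Phi:=\mathbf{j}f^{2}\partial_{\bar z}(f^{-1}\mathbf{K}_f)$, using $\partial_{\bar z}=\tfrac12(\partial_x-\mathbf{j}\partial_t)$, the scalarity of $f$, and the fact that $f=f(x)$ depends on $x$ only, a short computation---in which multiplication by $\mathbf{j}$ interchanges the projectors $\mathcal R$ and $\mathcal I$---gives $\Phi_1=\mathcal R(\Phi)=-\tfrac12 f\,\partial_t\mathbf{K}_f$ and $\Phi_2=\mathcal I(\Phi)=\tfrac12 f\,\partial_x\mathbf{K}_f-\tfrac12 f'\,\mathbf{K}_f$. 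I would then apply \eqref{ABarKernel} with $x_0=t_0=0$: for every $(x,t)\in\Omega$ the broken path $(0,0)\to(x,0)\to(x,t)$ lies inside $\Omega$ (on the first segment $|0|\le|\eta|$, on the second $|\xi|\le|t|\le|x|$), so the formula is legitimate and, after division by $f(x)$, reproduces precisely the three integral terms on the right-hand side of \eqref{K2Vekua}. The additive constant is pinned down by evaluating the resulting identity at the origin, where all integrals vanish: by Proposition~\ref{PropTT_D} one has $\mathbf{K}_{1/f}(x,t)=\mathbf{K}_{1/f}(x,t;-h)$, so \eqref{GoursatKh2} gives $\mathbf{K}_{1/f}(0,0)=-h/2=-f'(0)/2$ (recall $f(0)=1$, $h=f'(0)$), which forces $c=f'(0)/2$ and yields \eqref{K2Vekua}. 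Identity \eqref{K1Vekua} is obtained in exactly the same way from the ``vice versa'' formula: with $\Psi:=\mathbf{j}f^{-2}\partial_{\bar z}(f\mathbf{K}_{1/f})$ one computes $\Psi_1=-\tfrac1{2f}\partial_t\mathbf{K}_{1/f}$ and $\Psi_2=\tfrac{f'}{2f^{2}}\mathbf{K}_{1/f}+\tfrac1{2f}\partial_x\mathbf{K}_{1/f}$, the same path integration reproduces the integral terms, and the value $\mathbf{K}_f(0,0)=h/2$ from \eqref{GoursatKh2} gives $c'=f'(0)/2$.

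The only genuinely delicate points are the bookkeeping in the explicit evaluation of $\Phi_1,\Phi_2$ (the interplay of the $\mathbf{j}$-multiplication with $\mathcal R,\mathcal I$ and the $x$-only dependence of $f$) and the verification that the chosen broken path remains inside $\Omega$, so that the simplified formula \eqref{ABarKernel} may be used in place of the general line integral \eqref{ABarGamma}; once these are settled, identifying the additive constants is merely a one-point evaluation using the Goursat data \eqref{GoursatKh2}.
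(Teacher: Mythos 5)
Your argument is exactly the paper's proof of the \emph{smooth} case, and the computations are correct: the identification $u=\mathcal R(\mathbf K)=\mathbf K_f$, $v=\mathcal I(\mathbf K)=-\mathbf K_{1/f}$, the explicit components $\Phi_1=-\tfrac12 f\,\partial_t\mathbf K_f$, $\Phi_2=\tfrac12 f\,\partial_x\mathbf K_f-\tfrac12 f'\mathbf K_f$ (and likewise for $\Psi$), the evaluation of $\overline A_h$ along the broken path via \eqref{ABarKernel}, and the determination of the constants from $\mathbf K_{1/f}(0,0)=-f'(0)/2$ and $\mathbf K_f(0,0)=f'(0)/2$ all check out and reproduce \eqref{K2Vekua} and \eqref{K1Vekua}.

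There is, however, a genuine gap in generality. Theorem \ref{ThDarboux} requires $u$ to be a \emph{scalar solution of the Klein--Gordon equation} \eqref{KG1}; equivalently, path-independence of $\overline A_h[\Phi]$ needs the compatibility condition $\partial_t\Phi_1+\partial_x\Phi_2=0$, which involves second derivatives of $\mathbf K_f$ and amounts precisely to \eqref{GoursatKh1}. The kernel satisfies \eqref{GoursatKh1} classically only when $q_f\in C^1[-b,b]$ (then $\mathbf K_f$ is $C^2$), whereas the standing hypothesis of the paper is only $f\in C^2[-b,b]$, i.e.\ $q_f$ continuous, in which case $\mathbf K_f$ is merely $C^1$ and your invocation of Theorem \ref{ThDarboux} is not justified. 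The paper closes exactly this gap in the second half of its proof: it approximates $f$ by polynomials $f_n$ with $f_n\to f$, $f_n'\to f'$, $f_n''\to f''$ uniformly, applies the smooth-case identity to $\mathbf K_{f_n}$, and uses the equivalent systems of integral equations for the Goursat problem \eqref{GoursatTh1}--\eqref{GoursatTh2} to show that $\mathbf H_{f_n}$, $\partial_u\mathbf H_{f_n}$, $\partial_v\mathbf H_{f_n}$ converge uniformly, so that one may pass to the limit in \eqref{K2Vekua}. Without some such limiting argument your proof establishes the corollary only under the extra assumption $q_f\in C^1$. (A minor further point: you should also note, as the paper does, that \eqref{K1Vekua} follows from \eqref{K2Vekua} by interchanging $f$ and $1/f$ rather than rerunning the whole argument, though your direct derivation is equally valid.)
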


\begin{proof}
Suppose first that $q_{f}:= \frac{f^{\prime\prime}}{f} \in C^{1}[-b,b]$. In
such case we also have $q_{1/f}= 2\Bigl(\frac{f^{\prime}}{f}\Bigr)^{2} -
q_{f}\in C^{1}[-b,b]$ and the integral kernels $\mathbf{K}_{f}$ and
$\mathbf{K}_{1/f}$ are twice continuously differentiable with respect to both
variables and satisfy equation \eqref{GoursatKh1} with potentials $q_{f}$ and
$q_{1/f}$ correspondingly. Hence we obtain from Theorems \ref{ThDarboux} and
\ref{Th K Vekua} for the function $\mathbf{K}_{f}$ using the operator
$\overline{A}_{h}$ given by \eqref{ABarKernel} that the function
$\mathbf{K}_{1/f}$ differs from the function
\[
-v = -\frac{1}{f(x)} \int_{0}^{x} f(\eta) \partial_{t} \mathbf{K}_{f}%
(\eta,0)\,d\eta- \int_{0}^{t} \partial_{x} \mathbf{K}_{f}(x,\xi)\,d\xi+
\frac{f^{\prime}(x)}{f(x)}\int_{0}^{t} \mathbf{K}_{f}(x,\xi)\,d\xi
\]
by the term $cf^{-1}$, where $c$ is a constant. From the condition
$\mathbf{K}_{1/f}(0,0) = -\frac{f^{\prime}(0)}2$ (condition
\eqref{GoursatKh2}) we find that $c=-\frac{f^{\prime}(0)}2$.

Suppose now that $q_{f}\in C[-b,b]$. We proceed as follows. Consider a
sequence of polynomials $\{f_{n}\}_{n\in\mathbb{N}}$ such that $f_{n}(0)=1$
and $f_{n}^{\prime\prime}\to f^{\prime\prime}$, $f_{n}^{\prime}\to f^{\prime}$
and $f_{n}\to f$ uniformly on $[-b,b]$ as $n\to\infty$. See, e.g., \cite[Proof
of Theorem 11]{CKT} for a possible construction. Since the function $f$ does
not vanish on $[-b,b]$ we may additionally assume that all functions $f_{n}$
do no vanish on $[-b,b]$. It is easy to see that $q_{n}:=\frac{f_{n}%
^{\prime\prime}}{f_{n}}\to q$ uniformly and $q_{n}\in C^{1}[-b,b]$. Every
$f_{n}$ is associated with the transmutation operators $T_{f_{n}}$ and
$T_{1/f_{n}}$ with the integral kernels $\mathbf{K}_{f_{n}}$ and
$\mathbf{K}_{1/f_{n}}$. Consider functions $\mathbf{H}_{f_{n}}(u,v) =
\mathbf{K}_{f_{n}}(u+v, u-v; f^{\prime}_{n}(0))$. It is well known (e.g.,
\cite{Vladimirov}) that the Goursat problem \eqref{GoursatTh1},
\eqref{GoursatTh2} is equivalent to either of the systems of integral
equations
\[
\begin{cases}
\mathbf{H}(u,v) = \frac h2 + \int_{0}^{u} \mathbf{G}(u^{\prime}, v)
\,du^{\prime},\\
\mathbf{G}(u,v) = \frac12 q(u)+\int_{0}^{v} q(u+v^{\prime}) \mathbf{H}(u,
v^{\prime})\,dv^{\prime}%
\end{cases}
\]
and
\[
\begin{cases}
\mathbf{H}(u,v) = \frac h2 + \frac12\int_{0}^{u} q(s)\,ds+\int_{0}^{v}
\mathbf{F}(u, v^{\prime}) \,dv^{\prime},\\
\mathbf{F}(u,v) = \int_{0}^{u} q(u^{\prime}+v) \mathbf{H}(u^{\prime},
v)\,du^{\prime},
\end{cases}
\]
where $\mathbf{G}=\partial_{u} \mathbf{H}$ and $\mathbf{F}=\partial_{v}
\mathbf{H}$, from which we conclude that the functions $\mathbf{H}_{f_{n}}$,
$\partial_{u} \mathbf{H}_{f_{n}}$ and $\partial_{v} \mathbf{H}_{f_{n}}$ are
uniformly convergent to the functions $\mathbf{H}_{f}$, $\partial
_{u}\mathbf{H}_{f}$ and $\partial_{v}\mathbf{H}_{f}$, respectively. Hence the
partial derivatives of $\mathbf{K}_{f_{n}}$ are uniformly convergent as well,
and taking in \eqref{K2Vekua} the limit as $n\to\infty$ we finish the proof.

The second formula \eqref{K1Vekua} easily follows from \eqref{K2Vekua} by
changing $f$ to $1/f$.
\end{proof}

\begin{example}
\label{Example K Vekua} For the functions $\mathbf{K}_{f}$ and $\mathbf{K}%
_{1/f}$ from Example \ref{ModelExample} we get $\mathbf{K} = \frac12 -
\mathbf{j} \frac{t-1}{2(x+1)}$ and
\[
\partial_{\bar z} \mathbf{K} = \frac12 \left(  \mathbf{j} \frac{t-1}%
{2(x+1)^{2}} + \frac1{2(x+1)}\right)  = \frac1{2(x+1)}\mathbf{\overline{K}} =
\frac{f^{\prime}(x)}{2f(x)}\mathbf{\overline{K}}.
\]
Formulae \eqref{K2Vekua} and \eqref{K1Vekua} can be verified as well.
\end{example}

\section{Formulae for integral kernels of transmutation operators}\label{SectFIK}

It follows from Theorems \ref{Th Taylor Vekua} and \ref{Th K Vekua} and from
\eqref{nth Der} that the knowledge of the derivatives $\partial_{t}%
^{n}\mathbf{K}(0,0)$ allows one to obtain the generalized Taylor coefficients
\eqref{TaylorCoef} for $W = \mathbf{K}$. In this section we present several
formulae for computing these derivatives whenever the derivatives $q^{(m)}%
(0)$, $m\le n-1$ are known.

First we illustrate the expansion theorem with an example.

\begin{example}
For the function $\mathbf{K}$ from Example \ref{Example K Vekua} by
\eqref{nth Der} we obtain
\[
\mathbf{K}^{[0]}(0,0) = \frac12+\frac12\mathbf{j},\quad\mathbf{K}^{[1]}(0,0) =
\left.  -\frac{1}{2(x+1)}\right|  _{(0,0)}=-\frac12+0\mathbf{j},\quad
\mathbf{K}^{[n+1]} = \mathbf{j}\partial_{t}\mathbf{K}^{[n]} = 0,\ n\ge1.
\]
Hence the expansion coefficients are $a_{0}= \frac12+\frac12\mathbf{j}$,
$a_{1} = -\frac12+0\mathbf{j}$ and $a_{n}=0$ for $n\ge2$.

The first two functions of the systems $\{\varphi_{k}\}$ and $\{\psi_{k}\}$
are equal to
\begin{align*}
\varphi_{0}  &  = x+1, & \psi_{0}  &  = \frac1{x+1},\\
\varphi_{1}  &  = x, & \psi_{1}  &  = \frac{x^{3}+3x^{2}+3x}{3(x+1)},
\end{align*}
the first two formal powers are given by
\begin{align*}
Z^{(0)}(a,0; z)  &  = \mathcal{R}(a)\cdot\varphi_{0}+\mathbf{j}\mathcal{I}%
(a)\cdot\psi_{0},\\
Z^{(1)}(a,0; z)  &  = \mathcal{R}(a)\cdot(\varphi_{1}+\mathbf{j}t\psi
_{0})+\mathbf{j}\mathcal{I} (a)\cdot(\psi_{1}+\mathbf{j}t\varphi_{0})
\end{align*}
and we find that indeed,
\[
\mathbf{K}(x,t) = Z^{(0)}\Bigl(\frac12+\frac12\mathbf{j}, 0; z\Bigr)+Z^{(1)}%
\Bigl(-\frac12+0\mathbf{j}, 0; z\Bigr).
\]
\end{example}

We are able to calculate the expansion coefficients $a_{n}$ whenever we know
values of the derivatives $\partial_{t}^{n}\mathbf{K}(0,0)$. Recall that the
function $\mathbf{K}_{f}$ is the integral kernel of a transmutation operator
if and only if the function $\mathbf{H}_{f}(u,v) = \mathbf{K}_{f}(u+v,u-v)$ is
the solution of the Goursat problem \eqref{GoursatTh1}, \eqref{GoursatTh2}. We
have
\begin{equation}
\label{dtKdudvH}\partial_{t}^{n}\mathbf{K}_{f}(x,t) = \partial_{t}%
^{n}\mathbf{H}_{f}\Bigl(\frac{x+t}2, \frac{x-t}2\Bigr) = \frac1{2^{n}%
}(\partial_{u}-\partial_{v})^{n}\mathbf{H}_{f}(u,v)\Bigr|_{u=\frac
{x+t}2,\,v=\frac{x-t}2}.
\end{equation}
Moreover, it follows from \eqref{GoursatTh2} that
\begin{equation}
\label{duH(0)}\partial_{u}^{n}\mathbf{H}_{f}(u,0) =
\begin{cases}
\frac{f^{\prime}(0)}2 + \frac12\int_{0}^{u} q_{f}(s)\,ds, & n=0,\\
\frac12 q_{f}^{(n-1)}(u), & n\ge1,
\end{cases}
\end{equation}
and
\begin{equation}
\label{dvH(0)}\partial_{v}^{n}\mathbf{H}_{f}(0,v) =
\begin{cases}
\frac{f^{\prime}(0)}2, & n=0,\\
0, & n\ge1.
\end{cases}
\end{equation}
Hence to calculate the value of $\partial_{t}^{n}\mathbf{K}_{f}(0,0)$ it is
sufficient to transform the terms $\partial_{u}^{k}\partial_{v}^{n-k}%
\mathbf{H}_{f}$, where $1\le k\le n-1$, into the terms involving only
derivatives of the form $\partial_{u}^{\ell}\mathbf{H}_{f}$ or $\partial
_{v}^{m} \mathbf{H}_{f}$ and some derivatives of the potential $q_{f}$. Such
transformation may be performed using equation \eqref{GoursatTh1} which allows
us to reduce the orders $k$ and $n-k$ in the term $\partial_{u}^{k}%
\partial_{v}^{n-k}\mathbf{H}_{f}$ by 1. After several applications of equation
\eqref{GoursatTh1} and the Leibniz rule we obtain a finite sum of terms of the
form
\begin{equation}
\label{dtHGeneralTerm}q_{f}^{(n_{1})}(u+v)\cdot\ldots\cdot q_{f}^{(n_{\ell}%
)}(u+v) \partial^{d}_{u/v}\mathbf{H}_{f}(u,v).
\end{equation}
It is easy to see that the number $\ell$ of factors $q^{(n_{i})}(u+v)$
satisfies the inequality $0\le\ell\le\min(k,n-k)$ and that $n_{1}%
+\ldots+n_{\ell}+2\ell+d=n$.

Moreover, consider a term of the form \eqref{dtHGeneralTerm} obtained as a
result of application to $\mathbf{H}_{f}$ of some derivatives $\partial_{u}$
and $\partial_{v}$, equation \eqref{GoursatTh1} and the Leibniz rule. If
during this process we change every derivative $\partial_{u}$ by
$-\partial_{v}$ and every derivative $\partial_{v}$ by $-\partial_{u}$, we
obtain the term of exactly the same form, with the only difference that the
last derivative $\partial^{d}_{u/v}\mathbf{H}_{f}$ changes into $\partial
^{d}_{v/u}\mathbf{H}_{f}$, and the coefficient $(-1)^{n}$ appears. Hence, we
may group all the terms obtained after the transformation of the expression
$(\partial_{u}-\partial_{v})^{n}\mathbf{H}_{f}(u,v)$ into the pairs of the
form
\begin{equation}
\label{dtHGeneralTermPairs}\bigl(\partial_{u}^{d}+(-1)^{n}\partial^{d}%
_{v}\bigr)\mathbf{H}_{f}(u,v) \cdot\prod_{i=1}^{\ell}q_{f}^{(n_{i})}(u+v).
\end{equation}

Now we derive the recurrent relation for the derivative $(\partial
_{u}-\partial_{v})^{n}\mathbf{H}_{f}$. We may parametrize each term of the
form \eqref{dtHGeneralTermPairs} by the following parameters.

\begin{enumerate}
\item A number $\ell$ of the factors $q_{f}^{(n_{i})}(u+v)$. Since we need two
derivatives to obtain one factor $q(u+v)$ with the use of \eqref{GoursatTh1},
the number $\ell$ satisfies
\begin{equation}
\label{ParamEllIneq}0\le\ell\le\frac n2.
\end{equation}

\item A number $d$ of derivatives of $\mathbf{H}_{f}$ satisfying
\begin{equation}
\label{ParamDIneq}0\le d\le n-2\ell.
\end{equation}

\item An ordered sequence of non-negative numbers $n_{1}\le n_{2}\le\ldots\le
n_{\ell}$ (orders of derivatives of the factors $q_{f}(u+v)$) satisfying
\begin{equation}
\label{ParamNiIneq}n_{1}+n_{2}+\ldots+n_{\ell}+2\ell+d = n.
\end{equation}
\end{enumerate}

We denote by $(-1)^{\ell}S^{n}_{\ell; d; (n_{1},\ldots,n_{\ell})}$ the
coefficient at the term \eqref{dtHGeneralTermPairs} in the final
representation of $(\partial_{u}-\partial_{v})^{n}\mathbf{H}_{f}$ after the
described transformation. The factor $(-1)^{\ell}$ is used to make the number
$S^{n}_{\ell; d; (n_{1},\ldots,n_{\ell})}$ non-negative. We assume that
$S^{n}_{\ell; d; (n_{1},\ldots,n_{\ell})} = 0$ whenever either of the
conditions \eqref{ParamEllIneq}--\eqref{ParamNiIneq} fails.

\begin{lemma}
The number of different lists of parameters $\ell; d; (n_{1},\ldots,n_{\ell})$
satisfying conditions \eqref{ParamEllIneq}--\eqref{ParamNiIneq} for a fixed
$n$ coincides with the number $p(n)$ of partitions of $n$, i.e., the number of
ways $n$ may be represented as a sum of ordered positive integer terms, see,
e.g., \cite{Charal2002}, \cite{Comtet1974}. The number $p(n)$ is also known as
the number of different Young or Ferrers diagrams.
\end{lemma}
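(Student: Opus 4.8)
The plan is to prove the statement by exhibiting an explicit bijection between the set of admissible parameter lists $\ell;d;(n_{1},\ldots,n_{\ell})$ (for our fixed $n$) and the set of all partitions of $n$. The guiding observation is that conditions \eqref{ParamEllIneq}--\eqref{ParamNiIneq} say precisely the following: putting $m_{i}:=n_{i}+2$ for $i=1,\ldots,\ell$, the integers $m_{1}\le m_{2}\le\cdots\le m_{\ell}$ all satisfy $m_{i}\ge 2$, and $d$ is a nonnegative integer with $d+\sum_{i=1}^{\ell}m_{i}=n$. Thus an admissible list is nothing but a partition of $n$ into parts all $\ge 2$ (the numbers $m_{i}$), together with a nonnegative ``multiplicity'' $d$.

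First I would define the map $\Lambda$ sending the admissible list $\ell;d;(n_{1},\ldots,n_{\ell})$ to the partition of $n$ whose parts are $n_{\ell}+2\ge n_{\ell-1}+2\ge\cdots\ge n_{1}+2$ followed by exactly $d$ copies of the part $1$. Since each $n_{i}+2\ge 2$ and the remaining parts all equal $1$, the total equals $d+\sum_{i=1}^{\ell}(n_{i}+2)=n$ by \eqref{ParamNiIneq}, so $\Lambda$ indeed lands in the set of partitions of $n$. Then I would construct the inverse: given a partition $\lambda$ of $n$, let $d$ be the number of parts of $\lambda$ equal to $1$, let $m_{1}\le\cdots\le m_{\ell}$ be the parts of $\lambda$ that are $\ge 2$ arranged in increasing order, and set $n_{i}:=m_{i}-2\ge 0$. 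One checks directly that \eqref{ParamNiIneq} holds (the parts of $\lambda$ sum to $n$), that \eqref{ParamEllIneq} holds because $2\ell\le\sum m_{i}\le n$, and that \eqref{ParamDIneq} holds because $d=n-2\ell-\sum n_{i}\le n-2\ell$. Hence the list so obtained is admissible, and this assignment is visibly inverse to $\Lambda$.

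Consequently the number of admissible lists equals the number of partitions of $n$, which is $p(n)$, equivalently the number of Young (Ferrers) diagrams with $n$ boxes. There is essentially no obstacle in this argument; the only point deserving a moment of attention is that the two inequality constraints \eqref{ParamEllIneq} and \eqref{ParamDIneq}, which at first glance look like independent side conditions, are in fact automatic consequences of \eqref{ParamNiIneq} together with $n_{i}\ge 0$ and $d\ge 0$, so that the map into the parameter lists really is surjective.
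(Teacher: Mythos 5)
Your proposal is correct and uses essentially the same bijection as the paper: the list $\ell;d;(n_{1},\ldots,n_{\ell})$ corresponds to the partition of $n$ consisting of $d$ parts equal to $1$ together with the parts $n_{i}+2$. The paper states the one-to-one property without detail, whereas you also verify explicitly that \eqref{ParamEllIneq} and \eqref{ParamDIneq} follow automatically from \eqref{ParamNiIneq}, $n_{i}\ge 0$ and $d\ge 0$, which is a welcome (and correct) elaboration of the same argument.
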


\begin{proof}
Consider a list of parameters $\ell;d;(n_{1},\ldots,n_{\ell})$ and put into
correspondence to this list a partition $\underbrace{1,1,\ldots,1}_{d},
n_{1}+2,n_{2}+2,\ldots, n_{\ell}+2$. Due to the condition \eqref{ParamNiIneq}
this partition is a partition of $n$. It is easy to see that the described
correspondence is one-to-one.
\end{proof}

Note that $(\partial_{u}-\partial_{v})q_{f}(u+v)=0$, hence in the case $d=0$
we have
\begin{multline}
\label{ReccurStepD0}(\partial_{u}-\partial_{v}) \biggl(\bigl(\mathbf{H}%
_{f}(u,v)+(-1)^{n}\mathbf{H}_{f}(u,v)\bigr)\prod_{i=1}^{\ell}q_{f}^{(n_{i}%
)}(u+v)\biggr)\\
=\bigl(1+(-1)^{n}\bigr)\bigl(\partial_{u}+(-1)^{n+1}\partial_{v}%
\bigr)\mathbf{H}_{f}(u,v)\cdot\prod_{i=1}^{\ell}q_{f}^{(n_{i})}(u+v),
\end{multline}
and in the case $d\ne0$ we have
\begin{multline}
\label{ReccurStepD}(\partial_{u}-\partial_{v}) \biggl(\bigl(\partial^{d}%
_{u}+(-1)^{n}\partial^{d}_{v}\bigr)\mathbf{H}_{f}(u,v)\prod_{i=1}^{\ell}%
q_{f}^{(n_{i})}(u+v)\biggr)\\
\displaybreak[2]
=\Bigl(\bigl(\partial^{d+1}_{u}+(-1)^{n+1}\partial^{d+1}_{v}\bigr)\mathbf{H}%
_{f}(u,v) -\bigl(\partial^{d-1}_{u}+(-1)^{n+1}\partial^{d-1}_{v}%
\bigr)\bigl(q_{f}(u+v)\mathbf{H}_{f}(u,v)\bigr)\Bigr)\prod_{i=1}^{\ell}%
q_{f}^{(n_{i})}(u+v)\\
\displaybreak[2]
=\bigl(\partial^{d+1}_{u}+(-1)^{n+1}\partial^{d+1}_{v}\bigr)\mathbf{H}%
_{f}(u,v)\cdot\prod_{i=1}^{\ell}q_{f}^{(n_{i})}(u+v)\\
-\sum_{k=0}^{d-1} \binom{d-1}{k} q_{f}^{(k)}(u+v)\bigl(\partial^{d-1-k}%
_{u}+(-1)^{n+1}\partial^{d-1-k}_{v}\bigr)\mathbf{H}_{f}(u,v)\cdot\prod
_{i=1}^{\ell}q_{f}^{(n_{i})}(u+v).
\end{multline}
Also note that the coefficient at the term $(\partial_{u}^{n}+(-1)^{n}%
\partial^{n}_{v})\mathbf{H}_{f}(u,v)$ is equal to $1/2$ for $n=0$ and $1$ for
$n\ge1$, hence $S^{0}_{0;0;()}=1/2$ and $S^{n}_{0; n; ()}=1$ for $n\ge1$. From
the relations \eqref{ReccurStepD0} and \eqref{ReccurStepD} and the latter
initial condition we obtain the following statement.

\begin{proposition}
\label{Prop S reccurrence} The coefficients $S^{n+1}_{\ell; d;(n_{1}%
,\ldots,n_{\ell})}$ with the lists of parameters $\ell; d;(n_{1}%
,\ldots,n_{\ell})$ satisfying conditions
\eqref{ParamEllIneq}--\eqref{ParamNiIneq} can be calculated by means of the
following recurrent relations
\begin{equation}
\label{S0reccurr}%
\begin{split}
S^{n+1}_{\ell; d;(n_{1},\ldots,n_{\ell})} =  &  \bigl(1+(-1)^{n}%
\bigr)S^{n}_{\ell; d-1;(n_{1},\ldots,n_{\ell})}\\
&  +\sum_{n_{k}\in\{n_{1},\ldots,n_{\ell}\}} \binom{d+n_{k}}{n_{k}}S^{n}%
_{\ell-1; d+n_{k}+1;(n_{1},\ldots,n_{k-1},n_{k+1},\ldots,n_{\ell})}%
\qquad\text{if } d=1,
\end{split}
\end{equation}
\begin{equation}
\label{Sreccurr}
\begin{split}S^{n+1}_{\ell; d;(n_{1},\ldots,n_{\ell})} = & S^{n}_{\ell;
d-1;(n_{1},\ldots,n_{\ell})}\\
& +\sum_{n_{k}\in\{n_{1},\ldots,n_{\ell}\}}
\binom{d+n_{k}}{n_{k}}S^{n}_{\ell-1; d+n_{k}+1;(n_{1},\ldots,n_{k-1}%
,n_{k+1},\ldots,n_{\ell})}\qquad\text{if } d\ne1,
\end{split}
\end{equation}
where $\sum_{n_{k}\in\{n_{1},\ldots,n_{\ell}\}}$ means that only different
values of $n_{i}$ are used in the sum, with the initial conditions
\begin{equation}
\label{SreccurrInit}S^{0}_{0;0;()}=1/2,\qquad S^{n}_{0;n;()} = 1\quad
\text{for}\ n\ge1.
\end{equation}
\end{proposition}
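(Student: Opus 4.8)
The plan is to argue by induction on $n$, the inductive object being the representation of $(\partial_u-\partial_v)^n\mathbf{H}_f$ produced by the reduction procedure, namely a finite linear combination of the paired terms $\bigl(\partial_u^d+(-1)^n\partial_v^d\bigr)\mathbf{H}_f(u,v)\cdot\prod_{i=1}^{\ell}q_f^{(n_i)}(u+v)$ of the form \eqref{dtHGeneralTermPairs}, in which the coefficient of the term indexed by $(\ell;d;(n_1,\ldots,n_\ell))$ is precisely $(-1)^\ell S^n_{\ell;d;(n_1,\ldots,n_\ell)}$. For the induction it is convenient not to expand $(\partial_u-\partial_v)^n$ at once but to apply $\partial_u-\partial_v$ one factor at a time, re-reducing after each application; the effect of a single such application on one paired term is exactly what \eqref{ReccurStepD0} (for $d=0$) and \eqref{ReccurStepD} (for $d\ge1$) describe. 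The base case $n=0$ is the identity $\mathbf{H}_f=\tfrac12\bigl(\partial_u^0+(-1)^0\partial_v^0\bigr)\mathbf{H}_f$, i.e.\ the single paired term with $\ell=d=0$, empty list, and coefficient $1/2=S^0_{0;0;()}$; the second value $S^n_{0;n;()}=1$ in \eqref{SreccurrInit} then reappears automatically as the coefficient of $\bigl(\partial_u^n+(-1)^n\partial_v^n\bigr)\mathbf{H}_f$.

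First I would read off from \eqref{ReccurStepD0}--\eqref{ReccurStepD} the two mechanisms by which a source term $(\ell;d;(n_1,\ldots,n_\ell))$ at level $n$ feeds terms at level $n+1$. Via the leading piece it feeds the term $(\ell;d+1;(n_1,\ldots,n_\ell))$ with multiplier $1$ if $d\ge1$ and with multiplier $1+(-1)^n$ if $d=0$. Via the Leibniz sum, present only when $d\ge1$, it feeds, for each $k\in\{0,\ldots,d-1\}$, the term with parameters $\ell+1$, $d-1-k$, and the list obtained by inserting $k$ into $(n_1,\ldots,n_\ell)$ and reordering, with multiplier $-\binom{d-1}{k}$. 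Then I would fix a target index $(\ell;d;(n_1,\ldots,n_\ell))$ admissible for $n+1$ and enumerate all source/index pairs landing on it. Its leading contribution comes from the single source $(\ell;d-1;(n_1,\ldots,n_\ell))$, and the factor $1+(-1)^n$ appears precisely when $d-1=0$, i.e.\ $d=1$ --- the only place where \eqref{S0reccurr} and \eqref{Sreccurr} differ. Its Leibniz contributions come from deleting one entry $n_k$ from the list and letting that entry play the role of the split-off order $k$: such a source has $\ell$ lowered to $\ell-1$, its own $d$-value equal to $d+n_k+1$, and the multiplier is $-\binom{d+n_k}{n_k}$. Care is needed with multiplicity: if several entries of $(n_1,\ldots,n_\ell)$ coincide, deleting any of them produces the same source and the same $k$, so the contributions are indexed by the distinct values occurring in $(n_1,\ldots,n_\ell)$ --- exactly the meaning of the sums in \eqref{S0reccurr}--\eqref{Sreccurr}.

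Assembling these contributions gives an identity for $(-1)^\ell S^{n+1}_{\ell;d;(n_1,\ldots,n_\ell)}$ whose right-hand side carries a factor $(-1)^\ell$ on the leading term and $(-1)^{\ell-1}$ on each Leibniz term; cancelling $(-1)^\ell$ turns $-\binom{d+n_k}{n_k}$ into $+\binom{d+n_k}{n_k}$ and yields \eqref{S0reccurr} when $d=1$ and \eqref{Sreccurr} when $d\ne1$. The convention that $S^n_{\ast}$ vanishes outside the admissible range \eqref{ParamEllIneq}--\eqref{ParamNiIneq} is what makes the degenerate cases correct without separate treatment (for a target with $d=0$ the leading source would have $d=-1$ and so drops out). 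One should also check once and for all that both contribution mechanisms preserve \eqref{ParamEllIneq}--\eqref{ParamNiIneq} --- a short count of $\ell$, $d$ and $n_1+\cdots+n_\ell$, using that the Leibniz branch requires $d\ge1$ --- so that the induction never leaves the claimed index set and no spurious terms are produced.

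I do not expect any single computation to be the real difficulty: the analytic content is already packaged in \eqref{ReccurStepD0} and \eqref{ReccurStepD}, which are themselves immediate from \eqref{GoursatTh1} and the Leibniz rule. The genuine obstacle is the combinatorial bookkeeping --- enumerating exhaustively and without repetition the source/index pairs feeding a given target, keeping the sign $(-1)^\ell$ consistent throughout, and correctly collapsing the over-counting caused by repeated list entries so that the recurrence sums run over distinct values only. A secondary, easily overlooked point is cleanly separating the $d=1$ case, inherited from the $d=0$ branch of \eqref{ReccurStepD0} and hence carrying the extra $1+(-1)^n$, from the generic $d\ne1$ case.
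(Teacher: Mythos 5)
Your argument is correct and is essentially the paper's own derivation: the paper obtains the recurrence directly from \eqref{ReccurStepD0}, \eqref{ReccurStepD} and the initial conditions \eqref{SreccurrInit}, and your proposal simply makes explicit the bookkeeping (which source terms feed a given target, the cancellation of the sign $(-1)^{\ell}$, the collapse of repeated list entries onto distinct values, and the isolation of the $d=1$ case inheriting the factor $1+(-1)^{n}$) that the paper leaves implicit. Your additional remark that $S^{n}_{0;n;()}=1$ for $n\ge1$ already follows from $S^{0}_{0;0;()}=1/2$ and the recurrence is a correct minor refinement.
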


The following proposition presents a direct formula for the coefficients
$S^{n}_{\ell; d; (n_{1},\ldots,n_{\ell})}$ not involving recurrent relations.

\begin{proposition}
\label{Prop S direct} The following formula holds for the coefficient
$S^{n}_{\ell; d;(n_{1},\ldots,n_{\ell})}$, where $\ell\ge1$ and the list of
parameters $\ell; d;(n_{1},\ldots,n_{\ell})$ satisfies conditions
\eqref{ParamEllIneq}--\eqref{ParamNiIneq}:
\begin{multline}
\label{Sdirect}S^{n}_{\ell; d;(n_{1},\ldots,n_{\ell})} = \sum_{(\sigma
_{1},\ldots,\sigma_{\ell})\in\mathfrak{S}(n_{1},\ldots,n_{\ell})} \sum
_{d_{1}=0}^{d} \sum_{d_{2}=0}^{d_{1}+\sigma_{1}+1}\cdots\sum_{d_{\ell}%
=0}^{d_{\ell-1}+\sigma_{\ell-1}+1}\\
\prod_{i=1}^{\ell}\binom{d_{i}+\sigma_{i}}{\sigma_{i}}\bigl(1+(-1)^{\sigma
_{i}+\ldots+\sigma_{\ell}}\delta(d_{i})\bigl(1-\delta(d)\delta
(i-1)\bigr)\bigr),
\end{multline}
where $(\sigma_{1},\ldots,\sigma_{\ell})\in\mathfrak{S}(n_{1},\ldots,n_{\ell
})$ means that the sum is taken over all distinct permutations of the numbers
$n_{1},\ldots,n_{\ell}$, and $\delta(t)=1$ if $t=0$ and $\delta(t)=0$ otherwise.
\end{proposition}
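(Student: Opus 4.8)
The plan is to establish \eqref{Sdirect} by strong induction on the total order $n=n_1+\cdots+n_\ell+2\ell+d$, checking that its right-hand side obeys the recurrence of Proposition~\ref{Prop S reccurrence}. First note that \eqref{S0reccurr}--\eqref{Sreccurr} together with \eqref{SreccurrInit} determine every admissible $S^{n}_{\ell;d;(n_1,\ldots,n_\ell)}$ uniquely: the only admissible list of total order $0$ is $(\ell,d)=(0,0)$, fixed by \eqref{SreccurrInit}; every list of total order at least $1$ is the left-hand side of exactly one instance of \eqref{S0reccurr} or \eqref{Sreccurr} (according as $d=1$ or $d\ne1$), in which the lists $(\ell,d-1,(n_i))$ and $(\ell-1,d+n_k+1,(n_1,\ldots,\widehat{n_k},\ldots,n_\ell))$ on the right have total order $n-1$; and the prescribed values $S^{m}_{0;m;()}=1$, $m\ge1$, are exactly those produced by the recurrence, so the data are consistent. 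Denote by $R_{\ell;d;(n_1,\ldots,n_\ell)}$ the right-hand side of \eqref{Sdirect} when $\ell\ge1$, and put $R_{0;d;()}=1$ for $d\ge1$ (the empty product) and $R_{0;0;()}=\tfrac12$. It suffices to prove, for $\ell\ge1$, the identity
\[
R_{\ell;d;(n_1,\ldots,n_\ell)}=c\,R_{\ell;d-1;(n_1,\ldots,n_\ell)}+\sum_{n_k}\binom{d+n_k}{n_k}R_{\ell-1;\,d+n_k+1;\,(n_1,\ldots,\widehat{n_k},\ldots,n_\ell)},
\]
where $c=1+(-1)^{n_1+\cdots+n_\ell}$ if $d=1$ and $c=1$ otherwise: this is exactly \eqref{S0reccurr}, \eqref{Sreccurr} with $S$ replaced by $R$, so the inductive hypothesis identifies the right-hand side with $S^{n}_{\ell;d;(n_1,\ldots,n_\ell)}$. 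For $\ell=0$ the identity is immediate from the constraint \eqref{ParamNiIneq}.

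The core of the argument is to split the outermost summation $\sum_{d_1=0}^{d}$ in $R_{\ell;d;(n_1,\ldots,n_\ell)}$ into the single term $d_1=d$ and the range $0\le d_1\le d-1$. In the term $d_1=d$, since $\delta(i-1)=1$ for $i=1$ and $\delta(d_1)=\delta(d)$, the $i=1$ correction factor reduces to $1+(-1)^{\sigma_1+\cdots+\sigma_\ell}\delta(d)(1-\delta(d))=1$. Fix a value $v$ of $\sigma_1$; summing the remaining $(\ell-1)$-fold sum over all $\sigma\in\mathfrak{S}(n_1,\ldots,n_\ell)$ with $\sigma_1=v$ --- equivalently, over all permutations of the multiset $\{n_1,\ldots,n_\ell\}$ with one copy of $v$ deleted --- reproduces, after the index shift $i\mapsto i-1$, exactly $R_{\ell-1;\,d+v+1;\,(n_1,\ldots,\widehat{n_k},\ldots,n_\ell)}$ with $n_k=v$, the new leading parameter $d+v+1\ge1$ forcing $\delta(d+v+1)=0$ and hence all the factors $1-\delta(d+v+1)\delta(i-1)$ equal to $1$. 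Factoring out the $i=1$ binomial $\binom{d+v}{v}$ and summing over the distinct values $v$, the term $d_1=d$ equals $\sum_{n_k}\binom{d+n_k}{n_k}R_{\ell-1;\,d+n_k+1;\,(n_1,\ldots,\widehat{n_k},\ldots,n_\ell)}$, the factor-removal part of the identity.

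The range $0\le d_1\le d-1$ is empty unless $d\ge1$; assuming $d\ge1$ one has $\delta(d)=0$, hence every factor $1-\delta(d)\delta(i-1)$ equals $1$. If $d\ge2$ then $\delta(d-1)=0$ as well and this part is literally the sum defining $R_{\ell;d-1;(n_1,\ldots,n_\ell)}$, giving the identity with $c=1$. If $d=1$ then necessarily $d_1=0$, so $\binom{d_1+\sigma_1}{\sigma_1}=1$ and the $i=1$ factor equals the constant $1+(-1)^{\sigma_1+\cdots+\sigma_\ell}=1+(-1)^{n_1+\cdots+n_\ell}$, which can be pulled out of all the summations; what remains is the sum defining $R_{\ell;0;(n_1,\ldots,n_\ell)}$, whose own $i=1$ factor equals $1$ because there $\delta(d)=\delta(0)=1$. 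This yields the identity with $c=1+(-1)^{n_1+\cdots+n_\ell}$, which is exactly the coefficient $1+(-1)^n$ occurring in \eqref{S0reccurr} (whose list on the left has total order $n+1=n_1+\cdots+n_\ell+2\ell+1$). Adding the two parts, applying the inductive hypothesis to the lists of total order $n-1$ on the right, and comparing with Proposition~\ref{Prop S reccurrence}, we obtain $R_{\ell;d;(n_1,\ldots,n_\ell)}=S^{n}_{\ell;d;(n_1,\ldots,n_\ell)}$ and close the induction.

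The one genuinely delicate point is the behaviour of the three $\delta$-factors at the boundary values $d=0$ and $d=1$: one must see that the correction attached to $i=1$ disappears in the term $d_1=d$ but re-emerges, now as a single global constant $1+(-1)^{n_1+\cdots+n_\ell}$, precisely when $d=1$; everything else is routine re-indexing together with the elementary bijection between permutations of a multiset having a prescribed first entry and permutations of that multiset with the entry deleted. Conceptually, \eqref{Sdirect} merely records the possible histories of reducing $(\partial_u-\partial_v)^{n}\mathbf{H}_f$ to $\mathbf{H}_f$ by repeated use of \eqref{GoursatTh1} and the Leibniz rule --- $d_i$ is the order of the surviving derivative of $\mathbf{H}_f$ at the step where the factor $q_f^{(\sigma_i)}$ is split off, and $1+(-1)^{\sigma_i+\cdots+\sigma_\ell}$ is the factor $1+(-1)^{m}$ produced by \eqref{ReccurStepD0} each time that derivative order passes through $0$ --- and the induction above is the precise bookkeeping of this picture.
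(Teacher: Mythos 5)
Your proof is correct, but it runs in the opposite direction from the paper's. The paper derives \eqref{Sdirect} by completely unwinding the recurrence of Proposition \ref{Prop S reccurrence}: it parametrizes each reduction history from $S^{n}_{\ell;d;(n_{1},\ldots,n_{\ell})}$ down to some $S^{\tilde n}_{0;\tilde n;()}$ by the permutation $(\sigma_{1},\ldots,\sigma_{\ell})$ of removed factors and the derivative orders $d_{1},\ldots,d_{\ell}$ at the removal steps, checks that these parameters range over exactly the index set of \eqref{Sdirect}, and argues that a factor $1+(-1)^{n'}$ is picked up precisely in the two situations (``$d\neq 0$ and $d_{1}=0$'' or ``$d_{i}=0$, $i\geq 2$'') encoded by the expression $\delta(d_{i})\bigl(1-\delta(d)\delta(i-1)\bigr)$. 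You instead take the closed form as given and verify by strong induction on the total order that it satisfies the same recurrence with the same initial data, the key step being the split of the outer sum into $d_{1}=d$ (yielding the factor-removal terms, via the multiset-permutation bijection and the index shift) and $0\le d_{1}\le d-1$ (yielding $c\,R_{\ell;d-1;(n_{1},\ldots,n_{\ell})}$, with the constant $1+(-1)^{n_{1}+\cdots+n_{\ell}}$ correctly extracted when $d=1$); your bookkeeping of the three $\delta$-factors at $d=0$ and $d=1$, and of the consistency of the initial values $S^{m}_{0;m;()}=1$ with $S^{0}_{0;0;()}=\tfrac12$, is accurate. The trade-off is the usual one: the paper's path-enumeration explains where the formula comes from but is stated somewhat informally, while your verification is more mechanical yet fully rigorous and easier to check line by line; both rest entirely on Proposition \ref{Prop S reccurrence} and the uniqueness of the solution of that recurrence.
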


\begin{proof}
To arrive from the term $S^{n}_{\ell;d;(n_{1},\ldots,n_{\ell})}$ to a term
$S^{\tilde n}_{0;\tilde n;()}$ using recurrent relations \eqref{S0reccurr} and
\eqref{Sreccurr} we have to perform exactly $\ell$ times the step of removing
one of the $n_{k}$'s from the list $(n_{1},\ldots,n_{\ell})$. Denote by
$d_{1},\ldots,d_{\ell}$ the values of the parameter $d$ on each of these steps
and by $\sigma_{1},\ldots,\sigma_{\ell}$ the number $n_{k}$ removed from the
list $(n_{1},\ldots,n_{\ell})$ on the corresponding step $1,\ldots,\ell$. Then
the numbers $d_{1},\ldots,d_{\ell}$ satisfy the following inequalities
\begin{align*}
0 \le\,  &  d_{1}\le d,\\
0 \le\,  &  d_{2}\le1+d_{1}+\sigma_{1},\\
&  \, \vdots\\
0\le\,  &  d_{\ell}\le1+d_{\ell-1}+\sigma_{\ell-1}.
\end{align*}
Each distinct permutation of the numbers $n_{1},\ldots,n_{\ell}$ and each list
of parameters $d_{1},\ldots,d_{\ell}$ give us a different way of getting from
the term $S^{n}_{\ell;d;(n_{1},\ldots,n_{\ell})}$ to the term $S^{\tilde
n}_{0;\tilde n;()}$. Note that the factor $1+(-1)^{n^{\prime}}$ for some
$n^{\prime}\le n$ appears in \eqref{S0reccurr} only in the case when we cross
the coefficient $S^{n^{\prime}}_{\ell^{\prime};0;(n^{\prime}_{1}%
,\ldots,n^{\prime}_{\ell^{\prime}})}$ having $d^{\prime}=0$. Note also that if
$d=0$ initially, then $d_{1}=0$ and only \eqref{Sreccurr} is applicable, hence
the factor $1+(-1)^{n}$ does not appear in this case. So the factor
$1+(-1)^{n^{\prime}}$ appears only in one of the following cases: 1) $d\ne0$
and $d_{1}=0$, 2) $d_{i}=0$, $i\ge2$. It follows from condition
\eqref{ParamNiIneq} that in each of these cases $n^{\prime}=2\ell^{\prime
}+n_{1}^{\prime}+\ldots+n^{\prime}_{\ell^{\prime}}$ therefore $(-1)^{n^{\prime
}} = (-1)^{n_{1}^{\prime}+\ldots+n^{\prime}_{\ell^{\prime}}}$. Hence we
conclude that the following relation holds
\begin{multline*}
S^{n}_{\ell; d;(n_{1},\ldots,n_{\ell})} = \sum_{(\sigma_{1},\ldots
,\sigma_{\ell})\in\mathfrak{S}(n_{1},\ldots,n_{\ell})} \sum_{d_{1}=0}^{d}
\binom{d_{1}+\sigma_{1}}{\sigma_{1}} \bigl(1+(-1)^{\sigma_{1}+\ldots
+\sigma_{\ell}}\cdot\delta(d_{1})(1-\delta(d))\bigr)\\
\times\sum_{d_{2}=0}^{d_{1}+\sigma_{1}+1}\binom{d_{2}+\sigma_{2}}{\sigma_{2}}
\bigl(1+(-1)^{\sigma_{2}+\ldots+\sigma_{\ell}}\delta(d_{2})\bigr)\cdots
\sum_{d_{\ell}=0}^{d_{\ell-1}+\sigma_{\ell-1}+1}\binom{d_{\ell}+\sigma_{\ell}%
}{\sigma_{\ell}} \bigl(1+(-1)^{\sigma_{\ell}}\delta(d_{\ell})\bigr),
\end{multline*}
where the first sum is taken over all distinct permutations of numbers
$n_{1},\ldots,n_{\ell}$, and the function $\delta(\cdot)$ is used to the
encode mentioned above conditions 1) and 2). Finally note that the conditions
1) and 2) may be jointly represented by the expression $\delta(d_{i}%
)\bigl(1-\delta(d)\delta(i-1)\bigr)$.
\end{proof}

As a result we obtain the following representation for the derivative
$(\partial_{u}-\partial_{v})^{n}\mathbf{H}_{f}(u,v)$, $n\ge1$.

\begin{proposition}
\label{Prop Repres dudvH} Let $n\ge1$ and $q_{f}\in C^{(n-1)}[-b,b]$. Then
\begin{multline*}
(\partial_{u}-\partial_{v})^{n}\mathbf{H}_{f}(u,v) = \bigl(\partial^{n}%
_{u}+(-1)^{n}\partial_{v}^{n}\bigr)\mathbf{H}_{f}(u,v)\\
+\sum_{\ell=1}^{[n/2]} (-1)^{\ell}\sum_{d=0}^{n-2\ell}\bigl(\partial_{u}%
^{d}+(-1)^{n}\partial^{d}_{v}\bigr)\mathbf{H}_{f}(u,v) \sum_{\substack{n_{1}%
+\ldots+n_{\ell}=n-2\ell-d,\\0\le n_{1}\le\ldots\le n_{\ell}}} S^{n}%
_{\ell;d;(n_{1},\ldots,n_{\ell})} \prod_{i=1}^{\ell}q_{f}^{(n_{i})}(u+v),
\end{multline*}
where the coefficients $S^{n}_{\ell;d;(n_{1},\ldots,n_{\ell})}$ are given by
Proposition \ref{Prop S reccurrence} or Proposition \ref{Prop S direct} and do
not depend on $q_{f}$.
\end{proposition}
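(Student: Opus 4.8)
The plan is to prove the representation by induction on $n$, using the reduction identities \eqref{ReccurStepD0}--\eqref{ReccurStepD} for the inductive step and letting Propositions~\ref{Prop S reccurrence} and~\ref{Prop S direct} absorb all the combinatorics. For $n=1$ the asserted identity reduces to $(\partial_u-\partial_v)\mathbf{H}_f=(\partial_u+(-1)^1\partial_v)\mathbf{H}_f$, which is trivial; the $\ell$-sum is empty since $[1/2]=0$, and this is consistent with the initial value $S^1_{0;1;()}=1$ from \eqref{SreccurrInit}.

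Assume now that $(\partial_u-\partial_v)^n\mathbf{H}_f$ has the stated form and apply $\partial_u-\partial_v$ term by term. Every summand is of the paired type $\bigl(\partial_u^d+(-1)^n\partial_v^d\bigr)\mathbf{H}_f\cdot\prod_{i=1}^{\ell}q_f^{(n_i)}(u+v)$, and its image under $\partial_u-\partial_v$ is precisely what was computed in \eqref{ReccurStepD0} (case $d=0$) and \eqref{ReccurStepD} (case $d\neq0$): since $(\partial_u-\partial_v)q_f^{(m)}(u+v)=0$ the $q$-factors pass through unchanged, while the relation $\partial_u\partial_v\mathbf{H}_f=q_f(u+v)\mathbf{H}_f$ from \eqref{GoursatTh1} together with the Leibniz rule sends $\bigl(\partial_u^d+(-1)^n\partial_v^d\bigr)\mathbf{H}_f$ to $\bigl(\partial_u^{d+1}+(-1)^{n+1}\partial_v^{d+1}\bigr)\mathbf{H}_f$ minus a sum of paired terms carrying one extra factor $q_f^{(k)}$, with $\mathbf{H}_f$-derivative order $d-1-k$ and weight $\binom{d-1}{k}$. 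In particular the construction stays within paired terms and the overall parity sign flips from $(-1)^n$ to $(-1)^{n+1}$, exactly the sign occurring in the claimed formula at level $n+1$.

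It remains to identify the coefficient of a fixed target term with parameters $\ell;d;(n_1,\dots,n_\ell)$ at level $n+1$. Such a term arises only from the level-$n$ term with parameters $\ell;d-1;(n_1,\dots,n_\ell)$ through the ``$d$-increase'' branch --- contributing the factor $1+(-1)^n$ precisely when $d-1=0$, i.e.\ $d=1$, and the factor $1$ otherwise --- and from level-$n$ terms with parameters $\ell-1;\,d+n_k+1;\,(n_1,\dots,\widehat{n_k},\dots,n_\ell)$ for the $n_k$'s in the list, through the $q$-creation branch, where the Leibniz index is forced to be $k=n_k$ (so the weight is $\binom{d+n_k}{n_k}$) and the antecedent value of $d$ is forced to be $d+n_k+1$ by \eqref{ParamNiIneq}. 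Each $q$-creation step contributes a factor $-1$, which accounts for the prefactor $(-1)^\ell$; matching the remaining coefficients against \eqref{S0reccurr}--\eqref{Sreccurr} and the initial value $S^{n+1}_{0;n+1;()}=1$ (the coefficient of the pure $\partial_u^{n+1}$-term) shows that the coefficient of the target term is $(-1)^\ell S^{n+1}_{\ell;d;(n_1,\dots,n_\ell)}$, with the $S$'s given by Proposition~\ref{Prop S reccurrence} or, equivalently, by the closed form of Proposition~\ref{Prop S direct}; since that form involves only binomial coefficients, the $S$'s do not depend on $q_f$.

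The part needing genuine care --- rather than routine differentiation --- is this last matching: one must check that no spurious cross-terms appear, i.e.\ that for each target list $\ell;d;(n_1,\dots,n_\ell)$ the antecedent parameters at level $n$ are exactly the two families above, and that the parity bookkeeping $(-1)^{\sigma_i+\dots+\sigma_\ell}$ in Proposition~\ref{Prop S direct} correctly tracks the single flip $(-1)^{n}\mapsto(-1)^{n+1}$ performed at each recursive step. Once this is verified the induction closes and the representation follows.
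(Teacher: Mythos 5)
Your proof is correct and follows essentially the same route as the paper: the paper justifies this proposition by the derivation of the reduction identities \eqref{ReccurStepD0}--\eqref{ReccurStepD} and the recurrence \eqref{S0reccurr}--\eqref{Sreccurr} for the coefficients $S^{n}_{\ell;d;(n_{1},\ldots,n_{\ell})}$, presenting the representation as a direct consequence, and you simply make the underlying induction on $n$ explicit. The coefficient matching you describe (the $d$-increase branch versus the $q$-creation branch, with the factor $1+(-1)^{n}$ appearing exactly when the antecedent has $d-1=0$) is precisely the content of Proposition \ref{Prop S reccurrence}, so the argument closes as you claim.
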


The following proposition is a corollary of \eqref{dtKdudvH}, \eqref{duH(0)},
\eqref{dvH(0)} and Proposition \ref{Prop Repres dudvH}.

\begin{proposition}
Let $n\ge1$ and $q_{f}\in C^{(n-1)}[-b,b]$. Then
\begin{multline}
\label{dtK0}\partial_{t}^{n}\mathbf{K}_{f}(0,0) = \frac1{2^{n+1}}
\biggl( q_{f}^{(n-1)}(0) + \sum_{\ell=1}^{[n/2]} (-1)^{\ell}\sum
_{d=0}^{n-2\ell} \bigl(1+(-1)^{n}\delta(d)\bigr)q_{f}^{(d-1)}(0)\\
\times\sum_{\substack{n_{1}+\ldots+n_{\ell}=n-d-2\ell,\\0\le n_{1}\le\ldots\le
n_{\ell}}}S^{n}_{\ell;d;(n_{1},\ldots,n_{\ell})}\prod_{i=1}^{\ell}%
q_{f}^{(n_{i})}(0)\biggr),
\end{multline}
where we set $q_{f}^{(-1)}(0):=h=f^{\prime}(0)$, the coefficients $S^{n}%
_{\ell;d;(n_{1},\ldots,n_{\ell})}$ are given by Proposition
\ref{Prop S reccurrence} or Proposition \ref{Prop S direct} and do not depend
on $q_{f}$, and $\delta(d)=1$ if $d=0$ and $\delta(d)=0$ otherwise.

Formula \eqref{dtK0} also holds for $n=0$ for any $q_{f}\in C[-b,b]$.
\end{proposition}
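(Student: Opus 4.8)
This proposition is essentially a corollary obtained by evaluating the representation of Proposition~\ref{Prop Repres dudvH} at the origin, so the plan is mostly bookkeeping. First I would use \eqref{dtKdudvH} at $x=t=0$, which forces $u=v=0$, to reduce the claim to evaluating
\[
\partial_t^n\mathbf{K}_f(0,0)=\frac1{2^n}(\partial_u-\partial_v)^n\mathbf{H}_f(u,v)\Big|_{u=v=0}
\]
term by term against the formula in Proposition~\ref{Prop Repres dudvH}. The hypothesis $q_f\in C^{(n-1)}[-b,b]$ is exactly what is needed to invoke that proposition.

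Next I would record the boundary values of the pure derivatives of $\mathbf{H}_f$ at the origin. From \eqref{duH(0)}, putting $u=0$ (so that the integral $\int_0^u q_f$ vanishes) gives $\partial_u^0\mathbf{H}_f(0,0)=f'(0)/2$ and $\partial_u^d\mathbf{H}_f(0,0)=\tfrac12 q_f^{(d-1)}(0)$ for $d\ge1$; with the convention $q_f^{(-1)}(0):=h=f'(0)$ these cases merge into $\partial_u^d\mathbf{H}_f(0,0)=\tfrac12 q_f^{(d-1)}(0)$ for all $d\ge0$. From \eqref{dvH(0)} one gets $\partial_v^d\mathbf{H}_f(0,0)=\tfrac12 f'(0)\,\delta(d)=\tfrac12 q_f^{(-1)}(0)\,\delta(d)$. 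Hence for every $d\ge0$,
\[
\bigl(\partial_u^d+(-1)^n\partial_v^d\bigr)\mathbf{H}_f(0,0)=\tfrac12\bigl(1+(-1)^n\delta(d)\bigr)q_f^{(d-1)}(0),
\]
and in particular the leading term $\bigl(\partial_u^n+(-1)^n\partial_v^n\bigr)\mathbf{H}_f(0,0)$ of Proposition~\ref{Prop Repres dudvH} equals $\tfrac12 q_f^{(n-1)}(0)$ for $n\ge1$.

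Finally I would substitute these evaluations into Proposition~\ref{Prop Repres dudvH}, using also that the products $\prod_{i=1}^{\ell}q_f^{(n_i)}(u+v)$ are taken at $u+v=0$. The leading term contributes $\tfrac1{2^{n+1}}q_f^{(n-1)}(0)$; in the double sum over $\ell$ and $d$ each factor $\bigl(\partial_u^d+(-1)^n\partial_v^d\bigr)\mathbf{H}_f(0,0)$ becomes $\tfrac12\bigl(1+(-1)^n\delta(d)\bigr)q_f^{(d-1)}(0)$, and absorbing this $\tfrac12$ into the overall $1/2^n$ yields precisely \eqref{dtK0}, with the coefficients $S^n_{\ell;d;(n_1,\ldots,n_\ell)}$ independent of $q_f$ as asserted. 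For $n=0$ the statement is checked directly: $\partial_t^0\mathbf{K}_f(0,0)=\mathbf{K}_f(0,0)=\mathbf{H}_f(0,0)=h/2=\tfrac12 q_f^{(-1)}(0)$, which is the $n=0$ case of \eqref{dtK0} (the sum over $\ell$ being empty) and requires only $q_f\in C[-b,b]$. Since all the combinatorial weight has already been carried by Proposition~\ref{Prop Repres dudvH}, I do not expect any genuine obstacle here; the one point requiring care is tracking which boundary derivative of $\mathbf{H}_f$ survives at the origin, namely the extra $\partial_v$-contribution at $d=0$ together with the convention $q_f^{(-1)}(0)=h$, which is exactly what the factor $\bigl(1+(-1)^n\delta(d)\bigr)$ encodes.
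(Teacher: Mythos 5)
Your proof is correct and follows exactly the paper's route: the paper states this proposition without further argument as a corollary of \eqref{dtKdudvH}, \eqref{duH(0)}, \eqref{dvH(0)} and Proposition \ref{Prop Repres dudvH}, and your write-up simply makes that evaluation at the origin explicit, including the correct handling of the $d=0$ case via the convention $q_f^{(-1)}(0)=h$ and the factor $\bigl(1+(-1)^n\delta(d)\bigr)$.
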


As a consequence of Theorems \ref{Th W series}, \ref{Th Taylor Vekua} and
\ref{Th K Vekua} we obtain the following representation of the integral kernel
$\mathbf{K}_{f}$.

\begin{theorem}
\label{Th Formula K} Let $q_{f}\in C^{\infty}[-b,b]$ be a complex-valued
function and $f$ be a particular solution of \eqref{SLhom} such that $f\neq0$
on $[-b,b]$ and normalized as $f(0)=1$. Denote $h:=f^{\prime}(0)\in\mathbb{C}%
$. Suppose that the functions $g_{1}(x):=\frac12\bigl(\mathbf{K}%
_{f}(x,x)+\mathbf{K}_{f}(x,-x)\bigr)=\frac h2+\frac14\int_{0}^{x}
q_{f}(s)\,ds$ and $g_{2}(x):=\frac12\bigl(\mathbf{K}_{f}(x,x)-\mathbf{K}%
_{f}(x,-x)\bigr)=\frac14\int_{0}^{x} q_{f}(s)\,ds$ admit uniformly convergent
on $[-b,b]$ series expansions
\[
g_{1}(x) = c_{0} u_{0}(x,x) + \sum_{n=1}^{\infty}c_{n} u_{2n-1}(x,x)
\]
and
\[
g_{2}(x) = \sum_{n=1}^{\infty}b_{n} u_{2n}(x,x).
\]
Then the coefficients $\{c_{n}\}_{n\ge0}$ and $\{b_{n}\}_{n\ge1}$ may be found
by the formulas
\[
c_{n} =\frac{\partial_{t}^{n}\mathbf{K}_{f}(0,0)}{n!},\qquad b_{n} =
-\frac{\partial_{t}^{n}\mathbf{K}_{1/f}(0,0)}{n!},\quad\text{if } n\ \text{is
even}%
\]
and
\[
c_{n} =-\frac{\partial_{t}^{n}\mathbf{K}_{1/f}(0,0)}{n!},\qquad b_{n} =
\frac{\partial_{t}^{n}\mathbf{K}_{f}(0,0)}{n!},\quad\text{if } n\ \text{is
odd},
\]
where the derivatives $\partial_{t}^{n}\mathbf{K}_{f}(0,0)$ and $\partial
_{t}^{n}\mathbf{K}_{1/f}(0,0)$ are given by \eqref{dtK0} for the potentials
$q_{f}$ and $q_{1/f}=-q_{f}+2\bigl(f^{\prime}/f\bigr)^{2}$, respectively.
Also, for any $(x,t)\in\overline{\mathbf{R}}$
\begin{equation}
\label{FormulaK}\mathbf{K}_{f}(x,t) = c_{0}u_{0}(x,t)+\sum_{n=1}^{\infty
}\left(  c_{n}u_{2n-1}(x,t)+b_{n}u_{2n}(x,t)\right)  ,
\end{equation}
and the series converges uniformly in $\overline{\mathbf{R}}$.
\end{theorem}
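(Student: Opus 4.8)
The plan is to obtain the statement as a direct assembly of results already established in the paper, applied to the bicomplex function $W=\mathbf{K}=\mathbf{K}_f-\mathbf{j}\mathbf{K}_{1/f}$ of \eqref{KVekua}. First I would record the elementary reductions. Since $q_f\in C^{\infty}[-b,b]$ and $f$ is non-vanishing, the Darboux-associated potential $q_{1/f}=-q_f+2(f'/f)^2$ lies in $C^{\infty}[-b,b]$ as well, both integral kernels are smooth, so the derivatives $\partial_t^n\mathbf{K}_f(0,0)$ and $\partial_t^n\mathbf{K}_{1/f}(0,0)$ exist for every $n$ and are supplied by \eqref{dtK0} with potentials $q_f$ and $q_{1/f}$. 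By the Goursat conditions \eqref{GoursatKh2} one has $\mathbf{K}_f(x,x)=\tfrac h2+\tfrac12\int_0^x q_f(s)\,ds$ and $\mathbf{K}_f(x,-x)=\tfrac h2$, so the functions $g_1$ and $g_2$ in the statement are precisely $\tfrac12\bigl(\mathbf{K}_f(x,x)\pm\mathbf{K}_f(x,-x)\bigr)$; hence the two hypothesized series are exactly the expansions \eqref{+1} and \eqref{-1} for the real component $u=\mathcal{R}(W)=\mathbf{K}_f$.

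Next I would invoke Theorem \ref{Th K Vekua} to conclude that $W$ solves the hyperbolic Vekua equation \eqref{Vekua main x}, and then Theorem \ref{Th W series} to get that $W$ admits on $\overline{\mathbf{R}}$ the uniformly convergent expansion $W=\sum_{n\ge0}Z^{(n)}(a_n,0;z)$ with $a_n=c_n+\mathbf{j}b_n$, the constant $b_0$ being fixed by $\mathcal{I}(W)(0)=-\mathbf{K}_{1/f}(0,0)$ and playing no role in the final formula. Because this expansion holds, Theorem \ref{Th Taylor Vekua} yields the Taylor formula $a_n=W^{[n]}(0)/n!$, which by \eqref{nth Der} equals $\mathbf{j}^n\partial_t^nW(0,0)/n!=\mathbf{j}^n\bigl(\partial_t^n\mathbf{K}_f(0,0)-\mathbf{j}\,\partial_t^n\mathbf{K}_{1/f}(0,0)\bigr)/n!$. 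Separating the scalar components after inserting $\mathbf{j}^n=1$ for even $n$, and $\mathbf{j}^n=\mathbf{j}$ together with $\mathbf{j}^2=1$ for odd $n$, produces exactly the asserted parity-dependent formulas for $c_n=\mathcal{R}(a_n)$ and $b_n=\mathcal{I}(a_n)$.

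Finally, for \eqref{FormulaK} I would apply the scalar projection $\mathcal{R}$ to the convergent series for $W$. Since $|\mathcal{R}(w)|_{\mathbb{C}}\le|w|$ by \eqref{W+-}--\eqref{bicom_norm}, the projected series still converges uniformly on $\overline{\mathbf{R}}$, necessarily to $\mathcal{R}(W)=\mathbf{K}_f$. Reading off $\mathcal{R}\bigl(Z^{(0)}(a_0,0;z)\bigr)=c_0u_0(x,t)$ from \eqref{ZasWavePol0} (here $b_0$ drops out) and $\mathcal{R}\bigl(Z^{(n)}(a_n,0;z)\bigr)=c_nu_{2n-1}(x,t)+b_nu_{2n}(x,t)$, $n\ge1$, from \eqref{ZasWavePol}, and summing, gives \eqref{FormulaK} with uniform convergence in $\overline{\mathbf{R}}$.

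Since the argument is essentially a concatenation of earlier results, there is no deep obstacle; the only points requiring care are bookkeeping ones. One is to make sure the hypotheses of Theorem \ref{Th W series} hold verbatim, i.e.\ that $g_1,g_2$ really are the symmetrizations of the Goursat data of $\mathbf{K}_f$. The more delicate one is tracking the hyperbolic unit $\mathbf{j}$, and in particular the minus sign in $\mathbf{K}=\mathbf{K}_f-\mathbf{j}\mathbf{K}_{1/f}$, through the parity split in $\mathbf{j}^n$ — this is exactly what makes $c_n$ and $b_n$ exchange roles and pick up signs between the even and odd cases. I would also note at the outset that $\mathbf{K}_f$ and $\mathbf{K}_{1/f}$ here denote the continuations to $\overline{\mathbf{R}}$, which exist by the Goursat theory used in Section \ref{SectTO}.
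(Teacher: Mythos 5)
Your proposal is correct and follows essentially the same route as the paper, which derives Theorem \ref{Th Formula K} precisely as a consequence of Theorems \ref{Th K Vekua}, \ref{Th W series} and \ref{Th Taylor Vekua} together with \eqref{nth Der}; your sign and parity bookkeeping through $a_n=\mathbf{j}^n\bigl(\partial_t^n\mathbf{K}_f(0,0)-\mathbf{j}\,\partial_t^n\mathbf{K}_{1/f}(0,0)\bigr)/n!$ checks out and reproduces the stated formulas for $c_n$ and $b_n$ in both parities.
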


\begin{example}
For the potentials $q_{f}$ and $q_{1/f}$ from Example \ref{ModelExample} we
have
\[
q_{f}^{(-1)}(0) := f^{\prime}(0) = 1,\qquad q_{f}^{(n)}(0) = 0,\quad n\ge0,
\]
and
\[
q_{1/f}^{(-1)}(0) :=(1/f)^{\prime}(0)= -1,\qquad q_{1/f}^{(n)}(0) =
2(-1)^{n}(n+1)!,\quad n\ge0.
\]
By Proposition \ref{Prop S reccurrence} the first coefficients $S^{n}%
_{\ell;d;(n_{1},\ldots,n_{\ell})}$ for $n\le6$ have the following values
\begin{gather*}
S^{1}_{0;1;()}=1;\\
S^{2}_{0;2;()}=1,\quad S^{2}_{1;0;(0)}=1;\\
S^{3}_{0;3;()}=1;\quad S^{3}_{1;0;(1)}=1,\quad S^{3}_{1;1;(0)}=3;\\
S^{4}_{0;4;()}=1;\quad S^{4}_{1;0;(2)}=1,\quad S^{4}_{1;1;(1)}=2;\quad
S^{4}_{1;2;(0)}=4,\quad S^{4}_{2;0;(0,0)}=3;\\
S^{5}_{0;5;()}=1;\ S^{5}_{1;0;(3)}=1,\ S^{5}_{1;1;(2)}=5;\ S^{5}%
_{1;2;(1)}=5,\ S^{5}_{1;3;(0)}=5,\ S^{5}_{2;0;(0,1)}=6,\ S^{5}_{2;1;(0,0)}%
=10;\\
S^{6}_{0;6;()}=1;\quad S^{6}_{1;0;(4)}=1,\quad S^{6}_{1;1;(3)}=4;\quad
S^{6}_{1;2;(2)}=11,\quad S^{6}_{1;3;(1)}=9,\quad S^{6}_{1;4;(0)}=6,\\
S^{6}_{2;0;(0,2)}=10,\quad S^{6}_{2;0;(1,1)}=5,\quad S^{6}_{2;1;(0,1)}%
=15,\quad S^{6}_{2;2;(0,0)}=15,\quad S^{6}_{3;0;(0,0,0)}=10.
\end{gather*}
We can check that formula \eqref{dtK0} works, e.g.,
\begin{multline*}
\displaybreak[2] \partial_{t}^{5}\mathbf{K}_{1/f}(0,0) = \frac1{64}\Bigl(240+
(-1)^{1}\bigl(1\cdot(1+(-1)^{5})\cdot48 + 5\cdot24 + 5\cdot16+5\cdot24\bigr)\\
+(-1)^{2}\bigl(6\cdot(1+(-1)^{5})\cdot8+10\cdot8\bigr)\Bigr)=0,
\end{multline*}
and
\begin{multline*}
\displaybreak[2] \partial_{t}^{6}\mathbf{K}_{1/f}(0,0) = \frac1{128}%
\Bigl(-1440+ (-1)^{1}\bigl(1\cdot(1+(-1)^{6})\cdot(-240) + 4\cdot(-96) +
11\cdot(-48)\\
\displaybreak[2]
+9\cdot(-48)+6\cdot(-96)\bigr)
 +(-1)^{2}\bigl(10\cdot(1+(-1)^{6})\cdot(-24)+5\cdot
(1+(-1)^{6})\cdot(-16)\\
+15\cdot(-16)+15\cdot(-16)\bigr)+(-1)^{3}\bigl(10\cdot(1+(-1)^{6})\cdot(-8)\bigr)\Bigr)=0.
\end{multline*}
\end{example}

Now consider an example when all the coefficients $b_{n}$, $c_{n}$ from
\eqref{FormulaK} may be easily calculated explicitly from \eqref{Sdirect} and \eqref{dtK0}.

\begin{example}
Consider $q_{f}=c^{2}$ for some constant $c$ taking the function $f = e^{cx}$
as a non-vanishing solution of the equation $f^{\prime\prime}-q_{f} f=0$. Then
the Darboux transformed potential has the form $q_{1/f} = 2(f^{\prime}%
/f)^{2}-q_{f} = c^{2}$.

Consider the function $\mathbf{H}_{f}$ from \eqref{dtKdudvH}. Equation
\eqref{GoursatTh1} for $q_{f}=c^{2}$ reads as $\partial_{u}\partial
_{v}\mathbf{H}_{f} = c^{2}\mathbf{H}_{f}$, hence
\begin{equation}
\label{ExampleHconst}%
\begin{split}
(\partial_{u}-\partial_{v})^{n}\mathbf{H}_{f}(u,v)  &  = \sum_{k=0}^{n}
(-1)^{n-k}\binom{n}{k}\partial_{u}^{k}\partial_{v} ^{n-k}\mathbf{H}_{f}(u,v)\\
&  = (\partial_{u}^{n} + (-1)^{n}\partial_{v}^{n})\mathbf{H}_{f}(u,v) \\
& +\sum_{k=1}^{[n/2]-1} (-1)^{n-k} c^{2k} \binom{n}{k}\partial_{v}^{n-2k}%
\mathbf{H}_{f}(u,v)\\
&  + \sum_{k=[n/2]}^{n-1} (-1)^{n-k} c^{2(n-k)} \binom{n}{k}\partial
_{u}^{2k-n}\mathbf{H}_{f}(u,v).
\end{split}
\end{equation}
It follows from \eqref{duH(0)} and \eqref{dvH(0)} that at the point $u=v=0$
every term in \eqref{ExampleHconst} involving a derivative of $\mathbf{H}_{f}$
of order at least 2 with respect to $u$ or at least 1 with respect to $v$
equals zero. Hence from \eqref{dtKdudvH} for any even $n=2m$ we obtain
\begin{equation}
\label{ExampleConstK2m}\partial_{t}^{2m}\mathbf{K}_{f}(0,0)=\frac1{2^{2m}%
}(-1)^{m} c^{2m}\binom{2m}{m}\mathbf{H}_{f}(0,0) = \frac{(-1)^{m} c^{2m+1}%
}{2^{2m+1}}\binom{2m}{m},
\end{equation}
and for any odd $n=2m+1$ we obtain
\begin{equation}
\label{ExampleConstK2m+1}\partial_{t}^{2m+1}\mathbf{K}_{f}(0,0)=\frac
1{2^{2m+1}}(-1)^{m} c^{2m}\binom{2m+1}{m}\partial_{u}\mathbf{H}_{f}(0,0) =
\frac{(-1)^{m} c^{2m+2}}{2^{2m+2}}\binom{2m+1}{m}.
\end{equation}
Similarly for the kernel $\mathbf{K}_{1/f}$ we have
\[
\partial_{t}^{2m}\mathbf{K}_{1/f}(0,0)=-\frac{(-1)^{m} c^{2m+1}}{2^{2m+1}%
}\binom{2m}{m},\qquad\partial_{t}^{2m+1}\mathbf{K}_{1/f}(0,0)= \frac{(-1)^{m}
c^{2m+2}}{2^{2m+2}}\binom{2m+1}{m}.
\]
Hence by Theorem \ref{Th Formula K} the expansion coefficients have the form
\begin{align*}
c_{2m}  &  = \frac{(-1)^{m} c^{2m+1}}{2^{2m+1} (m!)^{2}}, & b_{2m}  &  =
\frac{(-1)^{m} c^{2m+1}}{2^{2m+1} (m!)^{2}},\\
c_{2m+1}  &  = \frac{(-1)^{m+1} c^{2m+2}}{2^{2m+2} m! (m+1)!}, & b_{2m+1}  &
= \frac{(-1)^{m} c^{2m+2}}{2^{2m+2} m! (m+1)!}.
\end{align*}
Now we show that it is possible to obtain the same coefficients from
\eqref{Sdirect} and \eqref{dtK0}. Note that any term in \eqref{dtK0} involving
a positive order derivative of $q_{f}$ is equal to zero. Hence $n_{1}%
=n_{2}=\ldots=n_{\ell}=0$ and $n=2\ell+d$. Moreover, $d\le1$ and is equal
either to 0 or 1 depending on the parity of $n$. So we have to calculate only
one coefficient, $S^{2m}_{m;0;(0,\ldots,0)}$ for $n=2m$ and $S^{2m+1}%
_{m;1;(0,\ldots,0)}$ for $n=2m+1$. Formula \eqref{Sdirect} for the coefficient
$S^{2m}_{m;0;(0,\ldots,0)}$ takes the form
\[
S^{2m}_{m;0;(0,\ldots,0)} = \sum_{d_{2}=0}^{1} \bigl(1+\delta(d_{2}%
)\bigr)\sum_{d_{3}=0}^{d_{2}+1}\bigl(1+\delta(d_{3})\bigr)\cdots\sum_{d_{m}%
=0}^{d_{m-1}+1}\bigl(1+\delta(d_{m})\bigr).
\]
We claim that
\begin{equation}
\label{ExampleConstCoeff}\sum_{d_{m-k+1}=0}^{d_{m-k}+1} \bigl(1+\delta
(d_{m-k+1})\bigr) \cdots\sum_{d_{m}=0}^{d_{m-1}+1}\bigl(1+\delta
(d_{m})\bigr) = \binom{2k+1+d_{m-k}}{k}.
\end{equation}
Indeed, for $k=1$ we have $\sum_{d_{m}=0}^{d_{m-1}+1}\bigl(1+\delta
(d_{m})\bigr) = d_{m-1}+3 = \binom{3+d_{m-1}}{1}$. Assume that the claim is
correct for some $k$, then for $k+1$ we obtain
\begin{multline*}
\sum_{d_{m-k}=0}^{d_{m-k-1}+1}\bigl(1+\delta(d_{m-k})\bigr)\cdot
\binom{2k+1+d_{m-k}}{k} \\= 2\binom{2k+1}{k}+\binom{2k+2}{k}+\ldots
+\binom{2k+2+d_{m-k-1}}{k},
\end{multline*}
and the induction step follows from the equality $2\binom{2k+1}{k}%
+\binom{2k+2}{k} = \binom{2k+3}{k+1}$ and the well-known combinatorial
identity $\binom{n+1}{k+1}=\binom{n}{k}+\binom{n-1}{k}+\ldots+\binom{n-s}%
{k}+\binom{n-s}{k+1}$, see, e.g., \cite{Comtet1974}. For $k=m-1$ we obtain
from \eqref{ExampleConstCoeff} that $S^{2m}_{m;0;(0,\ldots,0)} = \binom
{2m-1}{m-1}$ and $\partial_{t}^{2m}\mathbf{K}_{f}(0,0) = \frac{(-1)^{m}%
}{2^{2m+1}}2c\binom{2m-1}{m-1}(c^{2})^{m} = \frac{(-1)^{m} c^{2m+1}}{2^{2m+1}%
}\binom{2m}{m}$.

Similarly, for $n=2m+1$ we obtain from \eqref{ExampleConstCoeff} for $k=m$
that $S^{2m+1}_{m;1;(0,\ldots,0)} = \binom{2m+1}{m}$ and $\partial_{t}%
^{2m+1}\mathbf{K}_{f}(0,0) = \frac{(-1)^{m}}{2^{2m+2}}c^{2}\binom{2m+1}%
{m+1}(c^{2})^{m} = \frac{(-1)^{m} c^{2m+2}}{2^{2m+2}}\binom{2m+1}{m}$, exactly
as in \eqref{ExampleConstK2m} and \eqref{ExampleConstK2m+1}.
\end{example}

In the following example we calculate the expansion coefficients $b_{n}$ and
$c_{n}$, $n\le22$, for a reflectionless potential in the one-dimensional
quantum scattering theory, $q_{\sech}(x) = 1-2\sech^{2} x$, see, e.g.,
\cite{KrT2012}, \cite{Lamb}. For this potential to the difference with from
the previous examples neither the higher derivatives of $q_{f}$ nor the higher
derivatives with respect to $t$ of the transmutation kernel do not vanish identically.

\begin{example}
\label{ExampleCosh} The potential $q_{\sech}(x) = 1-2\sech^{2} x$ can be
obtained as a result of the Darboux transformation of the equation
$u^{\prime\prime}=u$ with the potential $q_{\cosh}\equiv1$ with respect to the
solution $f(x)=\cosh x$. The transmutation operator for the operator
$A_{\cosh}=\partial_{x}^{2}-1$ was calculated in \cite[Example 3]{CKT}. Its
kernel is given by the expression
\[
\mathbf{K}_{\cosh}(x,t)=\frac{1}{2}\frac{\sqrt{x^{2}-t^{2}}I_{1}(\sqrt
{x^{2}-t^{2}})}{x-t},
\]
where $I_{1}$ is the modified Bessel function of the first kind. Even though
in \cite{KrT2012} a formula for the integral kernel $\mathbf{K}_{\sech}$ was
presented, it is not well suited for calculating higher order derivatives with
respect to $t$. Using \eqref{K2Vekua} we obtain another representation with
the help of Maple 12 software
\begin{multline}
\label{Ksech}\mathbf{K}_{\sech}(x,t) =\frac12\biggl(I_{1}(x) - I_{0}(x)\tanh x
+\tanh x\int_{0}^{t}\frac{\sqrt{x^{2}-s^{2}} I_{1}(\sqrt{x^{2}-s^{2}})}%
{x-s}ds\\
+\int_{0}^{t} \frac{(xs-x^{2})I_{0}(\sqrt{x^{2}-s^{2}}) +\sqrt{x^{2}-s^{2}}
I_{1}(\sqrt{x^{2}-s^{2}})}{(x-s)^{2}}ds \biggr).
\end{multline}
Despite the integral kernel $\mathbf{K}_{\sech}$ could not be evaluated
explicitly, it is possible to expand it into a Taylor series at the point
$t=0$. Expanding the kernels $\mathbf{K}_{\cosh}$ and $\mathbf{K}_{\sech}$
into corresponding Taylor series and taking a limit as $x\to0$ we obtain
(again with the help of Maple 12 software) that
\begin{multline*}
\mathbf{K}_{\cosh}(0, t) = \frac{1}{4}\,t - \frac{1}{32} \,t^{3} + \frac
{1}{768} \,t^{5} - \frac{1}{36864} \,t^{7} + \frac{ 1}{2949120} \,t^{9} -
\frac{1}{353894400} \,t^{ 11} \\
+ \frac{1}{59454259200} \,t^{13}
- \frac{1}{13317754060800} \,t^{15} + \frac{1}{3835513169510400} \,t^{17} \\
- \frac{1}{1380784741023744000} \,t^{19}
+ \frac{1}{607545286050447360000}\,t^{21} + o(t^{22})
\end{multline*}
and
\begin{multline*}
\mathbf{K}_{\sech}(0, t) =- \frac{1}{4} \,t + \frac{1}{96 } \,t^{3} - \frac
{1}{3840} \,t^{5} + \frac{1}{258048} \,t^{7} - \frac{1}{26542080} \,t^{9} +
\frac{1}{ 3892838400} \,t^{11} \\
- \frac{1}{772905369600} \,t^{13} + \frac{1}{199766310912000} \,t^{15} - \frac{1}{65203723881676800} \,t^{17} \\
+ \frac{1}{26234910079451136000} \,t^{19}
- \frac{1}{12758451007059394560000} \,t^{21} + o(t^{22})
\end{multline*}
According to Theorem \ref{Th Formula K} we obtain the following table of the
expansion coefficients for the integral kernel $\mathbf{K}_{\sech}$. Since all
even coefficients are equal to 0, we do not include them in the table. To save
the space, we represent all odd coefficients in the form of a fraction
$\frac{x_{n}}{2^{n+1}n!}$.

\begin{center}
\noindent%
\begin{tabular}
[c]{ccccccc}\hline
$n$ & $1$ & $3$ & $5$ & $7$ & $9$ & $11$\\\hline
$b_{n}$ & $-\frac{1}{2^{2} 1!}$ & $\frac{1}{2^{4} 3!}$ & $-\frac{2}{2^{6} 5!}$
& $\frac{5}{2^{8} 7!}$ & $-\frac{14}{2^{10} 9!}$ & $\frac{42}{2^{12} 11!}$\\
\hline
$c_{n}$ & $-\frac{1}{2^{2} 1!}$ & $\frac{3}{2^{4} 3!}$ & $-\frac{10}{2^{6}
5!}$ & $\frac{35}{2^{8} 7!}$ & $-\frac{126}{2^{10} 9!}$ & $\frac{462}{2^{12}
11!}$\\\hline
\end{tabular}

\noindent%
\begin{tabular}
[c]{cccccc}\hline
$n$ & $13$ & $15$ & $17$ & $19$ &
$21$\\\hline
$b_{n}$ & $-\frac{132}{2^{14} 13!}$ & $\frac{429}{2^{16} 15!}$ & $-\frac{1430}{2^{18}
17!}$ & $\frac{4862}{2^{20} 19!}$ & $-\frac{16796}{2^{22} 21!}$\\\hline
$c_{n}$ &  $-\frac{1716}{2^{14} 13!}$ & $\frac{6435}{2^{16} 15!}$ &
$-\frac{24310}{2^{18} 17!}$ & $\frac{92378}{2^{20} 19!}$ & $-\frac
{352716}{2^{22} 21!}$\\\hline
\end{tabular}
\end{center}

For $f = \cosh x$ we have $h=f^{\prime}(0)=0$. The derivatives at the point
$x=0$ of the potential $q_{1}\equiv1$ are equal to $q_{1}^{(0)}(0)=1$,
$q_{1}^{(n)}(0)=0$, $n\ge1$, the non-zero values of the derivatives of the
potential $q(x) = 1-2\sech^{2}(x)$ at the same point are given in the
following table.

\begin{center}
\noindent%
\begin{tabular}
[c]{cccccccc}\hline
$n$ & $0$ & $2$ & $4$ & $6$ & $8$ & $10$ & $12$\\\hline
$q^{(n)}(0)$ & $-1$ & $4$ & $-32$ & $544$ & $-15872$ & $707584$ &
$-44736512$\\\hline
\end{tabular}

\noindent%
\begin{tabular}
[c]{ccccc}\hline
$n$ & $14$ & $16$ & $18$ & $20$\\\hline
$q^{(n)}(0)$ & $3807514624$ & $-419730685952$ & $58177770225664$ &
$-9902996106248192$\\\hline
\end{tabular}
\end{center}

The table below presents the values of the expression in parentheses appearing
in \eqref{dtK0}, obtained from the derivatives of the potentials presented
above with the coefficients $S^{n}_{\ell;d;(n_{1},\ldots,n_{\ell})}$
calculated by Proposition \ref{Prop S reccurrence}. All calculations were
performed with the help of Matlab 7 and the results agree with the ones
obtained from the explicit formulae for the integral kernels (see the table
for $b_{n}$ and $c_{n}$ above). The value $n=21$ is chosen only to keep the
size of the table reasonable. Our Matlab program calculated exact values of
the derivatives $\partial_{t}^{n}\mathbf{K}_{\cosh}(0,0)$ for $n\le30$ and
exact values of the derivatives $\partial_{t}^{n}\mathbf{K}_{\sech}(0,0)$ for
$n\le26$. To calculate the derivatives for larger values of $n$ one have to
use arbitrary precision arithmetic to overcome the limitation of the machine precision.

\begin{center}
\noindent%
\begin{tabular}
[c]{ccccccc}\hline
$n$ & $1$ & $3$ & $5$ & $7$ & $9$ & $11$ \\\hline
$2^{n+1}\partial_{t}^{n}\mathbf{K}_{\cosh}$ & $1$ & $-3$ & $10$ & $-35$ &
$126$ & $-462$\\\hline
$2^{n+1}\partial_{t}^{n}\mathbf{K}_{\sech}$ & $-1$ & $1$ & $-2$ & $5$ & $-14$
& $42$\\\hline
\end{tabular}

\noindent%
\begin{tabular}
[c]{cccccc}\hline
$n$ & $13$ & $15$ & $17$ & $19$ &
$21$\\\hline
$2^{n+1}\partial_{t}^{n}\mathbf{K}_{\cosh}$ & $1716$ & $-6435$ & $24310$ & $-92378$ & $352716$\\\hline
$2^{n+1}\partial_{t}^{n}\mathbf{K}_{\sech}$ & $-132$ & $429$ & $-1430$ & $4862$ & $-16796$\\\hline
\end{tabular}
\end{center}
\end{example}

\section{Approximate construction of integral kernels}\label{SectApproxKernel}

In this section we discuss two methods of approximate construction of integral
kernels of transmutation operators. The first method is based on the expansion
theorem and computation of the generalized Taylor coefficients by Theorem
\ref{Th Formula K} and formula \eqref{dtK0}. The second method is based on the
completeness result of hyperbolic formal powers and of generalized wave
polynomials (Proposition \ref{Prop RungeZ} and \cite[Theorem 22]{KKTT}) and on
approximating the Goursat data for the integral kernel by generalized wave
polynomials. However we present only the description of the main ideas of
these approximate methods and do not perform detailed analysis. Such analysis,
involving studying of approximative properties of the functions $\{u_{n}%
(x,x)\}$, convergency rate estimates and detailed comparison with existing
numerical techniques, e.g., successive approximation and the series expansions
from \cite{Boumenir2006}, goes beyond the scope of the present article. The
authors hope to perform such analysis and present the results shortly.

The first method is applicable in the case when we know derivatives of the
function $f\in C^{n}[-b,b]$ at the point $x=0$ of all orders up to some order
$n$, $n\geq2$, i.e.\ when we know coefficients in the Taylor formula
\[
f(x)=\sum_{k=0}^{n}f_{k}x^{k}+o(x^{k}).
\]
Suppose additionally that the function $f$ does not vanish on $[-b,b]$ and
normalized as $f(0)=1$, i.e.\ $f_{0}=1$. Then we can calculate Taylor
coefficients up to the order $n$ of the function $1/f$. For example, as it
follows from \cite[Theorem 11.7]{Charal2002} or directly from the Fa\`{a} di
Bruno formula \cite[Theorem 11.4]{Charal2002},
\[
\frac{1}{f(x)}=1+\sum_{k=1}^{n}\tilde{f}_{k}x^{k}+o(x^{k}),
\]
where
\begin{equation}
\tilde{f}_{k}=\sum_{\substack{m_{1}+2m_{2}+\ldots+km_{k}=k\\m_{1}\geq
0,\ldots,m_{k}\geq0}}(-1)^{m_{1}+\ldots+m_{k}}\binom{m_{1}+\ldots+m_{k}}%
{m_{1},\ldots,m_{k}}\prod_{j=1}^{k}f_{j}^{m_{j}}, \label{fInverseTaylor}%
\end{equation}
and $\binom{m_{1}+\ldots+m_{k}}{m_{1},\ldots,m_{k}}=\frac{(m_{1}+\ldots
+m_{k})!}{m_{1}!\cdot\ldots\cdot m_{k}!}$ are multinomial coefficients.

Having coefficients $f_{k}$ and $\tilde f_{k}$, $k\le n$, we easily calculate
the Taylor coefficients up to the order $n-2$ of the potentials $q_{f} =
f^{\prime\prime}/f$ and $q_{1/f} = f\cdot(1/f)^{\prime\prime}$. By
\eqref{dtK0} we obtain the values of the derivatives $\partial_{t}%
^{m}\mathbf{K}_{f}(0,0)$ and $\partial_{t}^{m}\mathbf{K}_{1/f}(0,0)$, $m\le
n-1$, and by Theorem \ref{Th Formula K} we find the coefficients $b_{k}$ and
$c_{k}$, $k\le n-1$ in the representation \eqref{FormulaK}. The obtained
truncated series is an approximation of the integral kernel $\mathbf{K}_{f}$.

If on the opposite the Taylor coefficients of the potential $q_{f}$ and the
value of the parameter $h=f^{\prime}(0)$ are known, the reconstruction of the
Taylor coefficients of the solution $f$ also presents no difficulty.

\begin{example}
\label{ExampleCosh2} Consider the function $f=\cosh x$ as in Example
\ref{ExampleCosh}. One can easily verify that the first Taylor coefficients
for the function $1/f$ calculated by \eqref{fInverseTaylor} coincide with the
coefficients in the known expansion
\[
\frac{1}{\cosh x}=\sech x=1-\frac{1}{2}x^{2}+\frac{5}{24}x^{4}-\frac{61}%
{720}x^{6}+\frac{277}{8064}x^{8}+O(x^{10}),
\]
and the calculated Taylor coefficients for the potentials $q_{f}$ and
$q_{1/f}$ coincide with the values presented in Example \ref{ExampleCosh}.%

\begin{figure}[tbh]
\centering
\includegraphics[
natheight=2.303800in,
natwidth=3.135700in,
height=2.93in,
width=3.975in
]%
{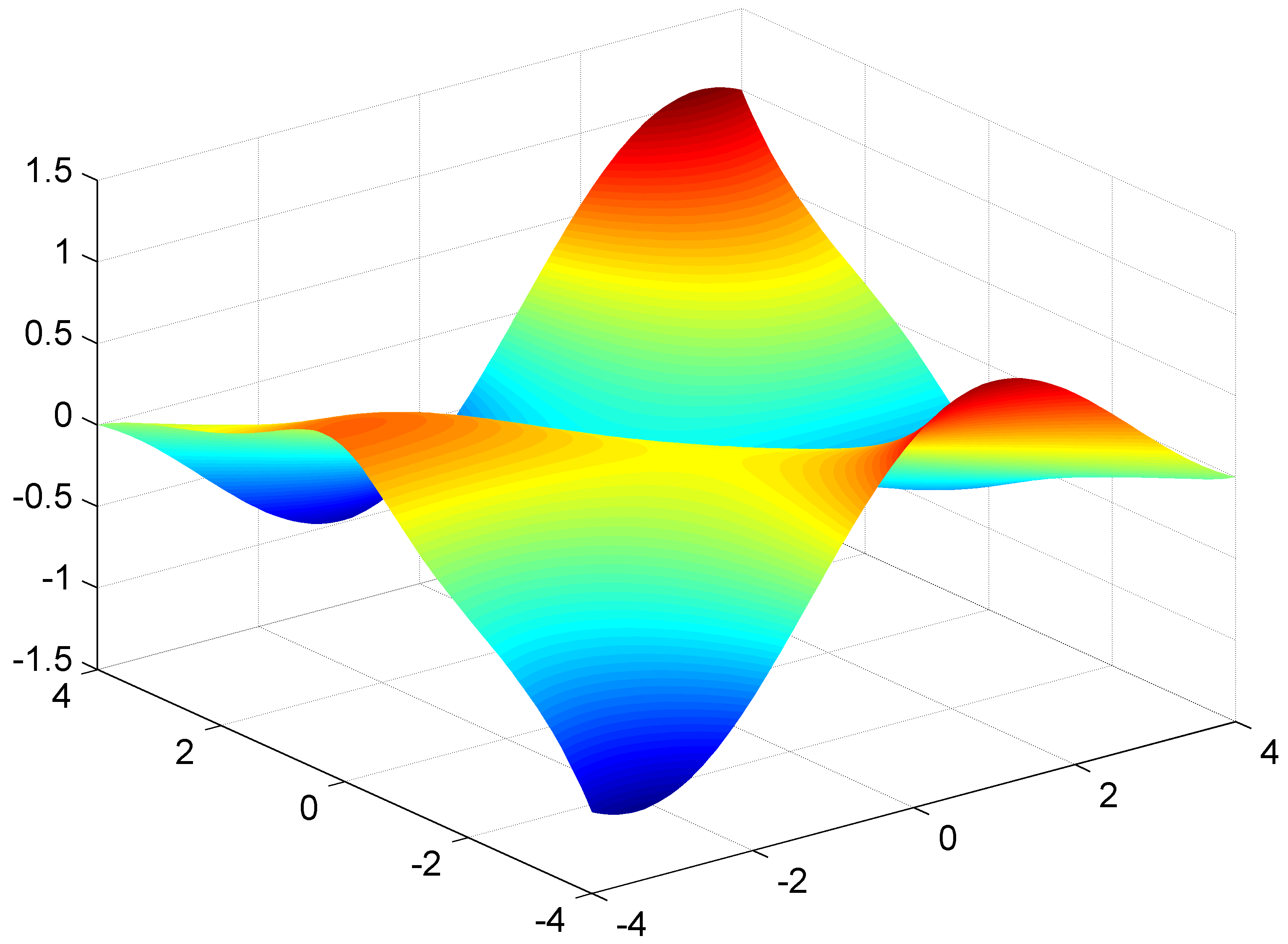}
\caption{The integral kernel $K_{\sech}$ in the domain
$\overline{\mathbf{R}}$  given by $b=4$.}
\end{figure}

We compare the exact integral kernel $\mathbf{K}_{\sech}$ given by
\eqref{Ksech} with its approximations based on the generalized Taylor
coefficients calculated in Example \ref{ExampleCosh}. To illustrate the
dependence of the approximation error on the domain $\overline{\mathbf{R}}$
and on the number of coefficients used, we present the resulting approximation
error for 3 different domains corresponding to values $b=1$, $b=2$ and $b=4$,
and for different number of generalized Taylor coefficients. The functions
$\varphi_{k}$, defined by \eqref{phik} and necessary for constructing the
generalized wave polynomials, are calculated using two Matlab routines from
the Spline Toolbox: on each step the integrand is approximated by a spline
with 5000 knots using the command \texttt{spapi} and then it is integrated
using \texttt{fnint}. The resulting kernel $\mathbf{K}_{\sech}$ and its
approximations were computed on the mesh of $100\times100$ equally spaced
points in $\overline{\mathbf{R}}$. Obtained results together with the graphs
of the integral kernel $\mathbf{K}_{\sech}$ and the approximation error are
given below.

\begin{center}
\noindent%
\begin{tabular}
[c]{|c|c|}\hline
\multicolumn{2}{|c|}{$b=1$}\\\hline
$N$ & Approx. error\\\hline
$1$ & $0.12833$\\
$3$ & $0.021458$\\
$5$ & $0.0017866$\\
$7$ & $8.9155\cdot10^{-5}$\\
$9$ & $2.9655\cdot10^{-6}$\\
$11$ & $7.0469\cdot10^{-8}$\\
$13$ & $1.2562\cdot10^{-9}$\\
$15$ & $7.3683\cdot10^{-11}$\\
$17$ & $7.3991\cdot10^{-11}$\\
$19$ & $7.3989\cdot10^{-11}$\\\hline
\end{tabular}
\qquad%
\begin{tabular}
[c]{|c|c|}\hline
\multicolumn{2}{|c|}{$b=2$}\\\hline
$N$ & Approx. error\\\hline
$5$ & $0.21204$\\
$7$ & $0.043909$\\
$9$ & $0.0059553$\\
$11$ & $0.00057228$\\
$13$ & $4.1076\cdot10^{-5}$\\
$15$ & $2.288\cdot10^{-6}$\\
$17$ & $1.0182\cdot10^{-7}$\\
$19$ & $3.7047\cdot10^{-9}$\\
$21$ & $3.3373\cdot10^{-10}$\\
$23$ & $3.3373\cdot10^{-10}$\\\hline
\end{tabular}
\qquad%
\begin{tabular}
[c]{|c|c|}\hline
\multicolumn{2}{|c|}{$b=4$}\\\hline
$N$ & Approx. error\\\hline
$13$ & $1.8261$\\
$15$ & $0.39987$\\
$17$ & $0.070023$\\
$19$ & $0.010037$\\
$21$ & $0.0011998$\\
$23$ & $0.00012146$\\
$25$ & $1.055\cdot10^{-5}$\\
$27$ & $7.9493\cdot10^{-7}$\\
$29$ & $5.2453\cdot10^{-8}$\\
$31$ & $3.0562\cdot10^{-9}$\\\hline
\end{tabular}
\end{center}

\begin{figure}[tbh]
\centering
\includegraphics[
natheight=2.303800in,
natwidth=3.135700in,
height=2.93in,
width=3.975in
]%
{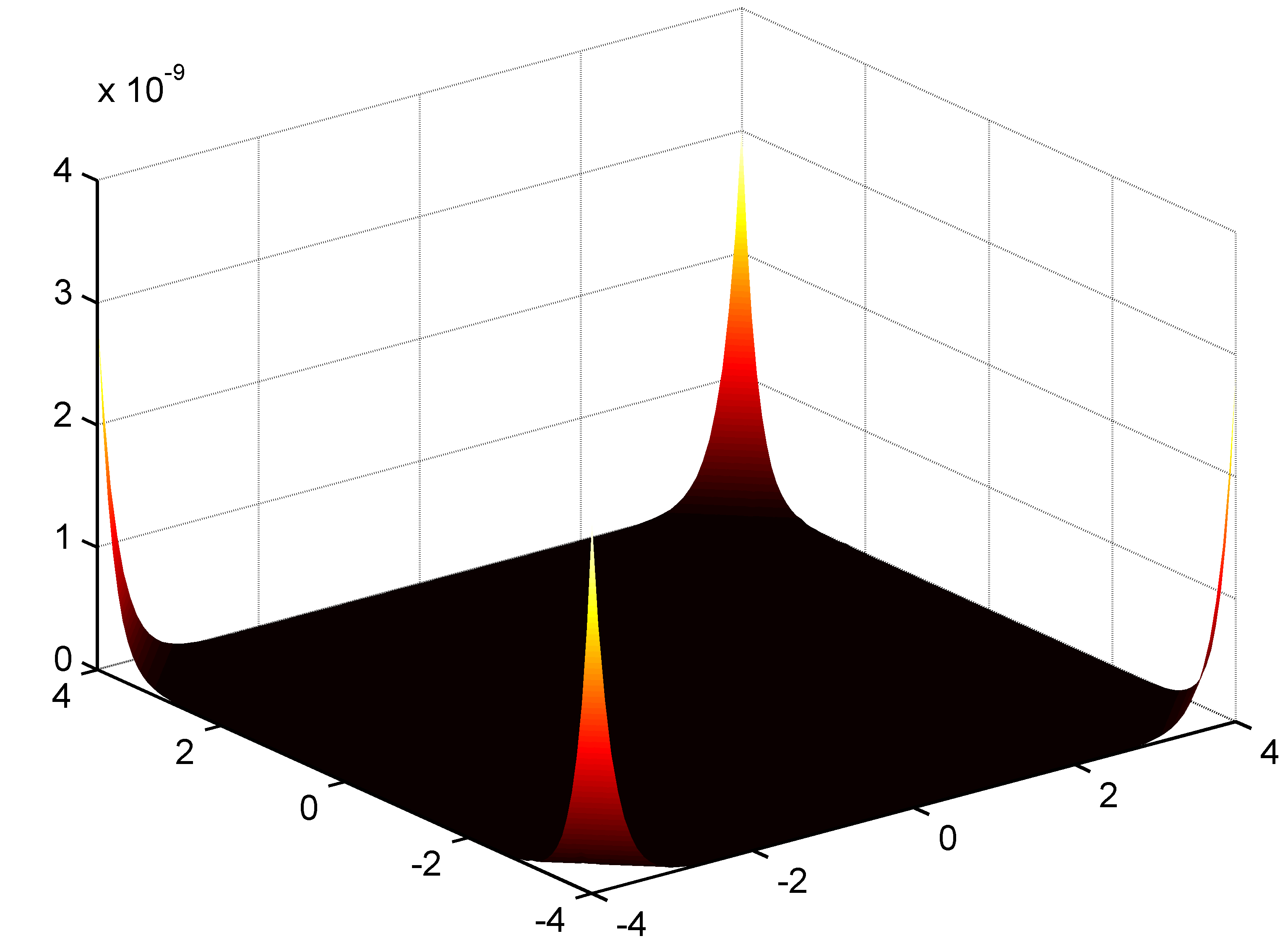}%
\caption{The approximation error of the integral kernel $K_{\sech}$
in the domain $\overline{\mathbf{R}}$  given by $b=4$ by the generalized wave polynomial of order $N=31$.}
\end{figure}

In order to estimate the error of approximation the exact formula
\eqref{Ksech} for the kernel $\mathbf{K}_{\sech}$ was used, where the
corresponding integrals were computed numerically. As can be seen from
\eqref{Ksech} the integrands for $t$ close to $x$ involve subtraction of close
values and division by values close to zero. Due to this fact the kernel
$\mathbf{K}_{\sech}$ computed in this way by \eqref{Ksech} is not exact. This
explains why in the above tables after a certain value of $N$ the reported
approximation error does not decrease, it is limited by the computational
error of $\mathbf{K}_{\sech}$.

To the difference of $\mathbf{K}_{\sech}$ the integral kernel $\mathbf{K}%
_{\cosh}$ does not involve integration and may be computed more precisely. The
approximations of the integral kernel $\mathbf{K}_{\cosh}$ were computed for
$b=2$ and the resulting absolute errors obtained for different numbers of
expansion coefficients are presented in the table below.

\begin{center}
\noindent%
\begin{tabular}
[c]{|c|c||c|c||c|c|}\hline
\multicolumn{6}{|c|}{$b=2$}\\\hline
$N$ & Approx. error & $N$ & Approx. error & $N$ & Approx. error\\\hline
$1$ & $1.7878$ & $11$ & $0.00074903$ & $21$ & $1.316\cdot10^{-10}$\\
$3$ & $1.088$ & $13$ & $5.2024\cdot10^{-5}$ & $23$ & $3.3386\cdot10^{-12}$\\
$5$ & $0.33724$ & $15$ & $2.8243\cdot10^{-6}$ & $25$ & $7.5051\cdot10^{-14}$\\
$7$ & $0.063779$ & $17$ & $1.2312\cdot10^{-7}$ & $27$ & $6.9944\cdot10^{-15}%
$\\
$9$ & $0.0081416$ & $19$ & $4.4042\cdot10^{-9}$ & $29$ & $6.6613\cdot10^{-15}%
$\\\hline
\end{tabular}
\end{center}
\end{example}

The first method of approximate construction of the integral kernels requires
the knowledge of higher order derivatives of the potentials $q_{f}$ and
$q_{1/f}$ in the origin. In the case when such derivatives are unknown or hard
to obtain analytically, instead of performing numerical differentiation we can
approximate the integral kernel by approximating its Goursat data given by
\eqref{GoursatKh2} with truncated series of the form \eqref{+1} and \eqref{-1}.

Indeed, suppose that the numbers $c_{0},\ldots,c_{N}$ and $b_{1},\ldots,b_{N}$
are such that
\[
\biggl|\frac{1}{2}\bigl(\mathbf{K}_{f}(x,x)+\mathbf{K}_{f}(x,-x)\bigr)-c_{0}%
u_{0}(x,x)-\sum_{n=1}^{N}c_{n}u_{2n-1}(x,x)\biggr|<\varepsilon_{1}%
\]
and
\[
\biggl|\frac{1}{2}\bigl(\mathbf{K}_{f}(x,x)-\mathbf{K}_{f}(x,-x)\bigr)-\sum
_{n=1}^{N}b_{n}u_{2n}(x,x)\biggr|<\varepsilon_{2}%
\]
for every $x\in\lbrack-b,b]$. Consider the functions
\begin{equation}
K(x,t)=c_{0}u_{0}(x,t)+\sum_{n=1}^{N}c_{n}u_{2n-1}(x,t)+\sum_{n=1}^{N}%
b_{n}u_{2n}(x,t) \label{Kapprox}%
\end{equation}
and $\widetilde{\mathbf{K}}_{f}=T_{f}^{-1}\mathbf{K}_{f}$ and $\widetilde
{K}=T_{f}^{-1}K$. Then by the definition of the Goursat-to-Goursat
transmutation operator
\[%
\begin{pmatrix}
\widetilde{\mathbf{K}}_{f}(x,x)\\
\widetilde{\mathbf{K}}_{f}(x,-x)
\end{pmatrix}
=T_{G}^{-1}%
\begin{pmatrix}
\mathbf{K}_{f}(x,x)\\
\mathbf{K}_{f}(x,-x)
\end{pmatrix}
\quad\text{and}\quad%
\begin{pmatrix}
\widetilde{K}(x,x)\\
\widetilde{K}(x,-x)
\end{pmatrix}
=T_{G}^{-1}%
\begin{pmatrix}
K(x,x)\\
K(x,-x)
\end{pmatrix}
,
\]
hence due to the boundedness of the operator $T_{G}^{-1}$
\begin{multline*}
\displaybreak[2]
\max\biggl(\max_{x\in\lbrack-b,b]}\bigl|\widetilde{\mathbf{K}}_{f}%
(x,x)-\widetilde{K}(x,x)\bigr|,\max_{x\in\lbrack-b,b]}\bigl|\widetilde
{\mathbf{K}}_{f}(x,-x)-\widetilde{K}(x,-x)\bigr|\biggr)\\
\displaybreak[2]
\leq\Vert T_{G}^{-1}\Vert\max\biggl(\max_{x\in\lbrack-b,b]}\bigl|\mathbf{K}%
_{f}(x,x)-K(x,x)\bigr|,\max_{x\in\lbrack-b,b]}\bigl|\mathbf{K}_{f}%
(x,-x)-K(x,-x)\bigr|\biggr)\\
\displaybreak[2]
\leq\Vert T_{G}^{-1}\Vert\max_{x\in\lbrack-b,b]}\biggl(\biggl|\frac{1}%
{2}\bigl(\mathbf{K}_{f}(x,x)+\mathbf{K}_{f}(x,-x)\bigr)-\frac{1}%
{2}\bigl(K(x,x)+K(x,-x)\bigr)\biggr|\\
+\biggl|\frac{1}{2}\bigl(\mathbf{K}_{f}(x,x)-\mathbf{K}_{f}(x,-x)\bigr)-\frac
{1}{2}\bigl(K(x,x)-K(x,-x)\bigr)\biggr|\biggr)<\Vert T_{G}^{-1}\Vert
(\varepsilon_{1}+\varepsilon_{2}),
\end{multline*}
where we have used equalities $\frac{1}{2}\bigl(K(x,x)+K(x,-x)\bigr)=c_{0}%
u_{0}(x,x)+\sum_{n=1}^{N}c_{n}u_{2n-1}(x,x)$ and $\frac{1}{2}%
\bigl(K(x,x)-K(x,-x)\bigr)=\sum_{n=1}^{N}b_{n}u_{2n}(x,x)$. We obtain from the
proof of \cite[Theorem 3]{KKTT} that for every $(x,t)\in\overline{\mathbf{R}%
}$
\[
\bigl|\widetilde{\mathbf{K}}_{f}(x,t)-\widetilde{K}(x,t)\bigr|\leq3\Vert
T_{G}^{-1}\Vert(\varepsilon_{1}+\varepsilon_{2}),
\]
hence for every $(x,t)\in\overline{\mathbf{R}}$
\[
\bigl|\mathbf{K}_{f}(x,t)-K(x,t)\bigr|\leq3\Vert T_{f}\Vert\cdot\Vert
T_{G}^{-1}\Vert(\varepsilon_{1}+\varepsilon_{2}).
\]
That is, if we approximate the function $g_{1}(x):=\frac{1}{2}\bigl(\mathbf{K}%
_{f}(x,x)+\mathbf{K}_{f}(x,-x)\bigr)=\frac{h}{2}+\frac{1}{4}\int_{0}^{x}%
q_{f}(s)\,ds$ by the functions $\{u_{0}(x,x)\}\cup\{u_{2n-1}(x,x)\}_{n\geq1}$
and the function $g_{2}(x):=\frac{1}{2}\bigl(\mathbf{K}_{f}(x,x)-\mathbf{K}%
_{f}(x,-x)\bigr)=\frac{1}{4}\int_{0}^{x}q_{f}(s)\,ds$ by the functions
$\{u_{2n}(x,x)\}_{n\geq1}$, we obtain an approximation having the form
\eqref{Kapprox} of the integral kernel. It is worth to mention that we do not
need to know the Darboux associated potential $q_{1/f}$ for this method and we
do not impose additional smoothness conditions on the potential $q_{f}$. The
approximation of the functions $g_{1}$ and $g_{2}$ by the corresponding
combinations of the functions $u_{n}(x,x)$ can be done in several ways. We may
use the least squares method to obtain a good, however far from the best,
approximation. We may reformulate the approximation problem as a linear
programming problem and solve it. Under the additional assumption that the
corresponding systems of the functions $u_{n}(x,x)$ satisfy the Haar
condition, we may use the Remez algorithm. We refer the reader to
\cite[Section 6]{KKTT} and to \cite[\S 7]{Meinardus}, \cite[Chapter 6]{Rice}
and \cite[Chapter 1]{Rivlin} for further details.

\begin{example}
We approximated the integral kernels $\mathbf{K}_{\sech}$ and $\mathbf{K}%
_{\cosh}$ from Example \ref{ExampleCosh2} by the second described method. All
the parameters are taken as in the previous example. The Goursat data were
approximated by corresponding functions $u_{n}(x,x)$ with the help of the
Remez simple exchange algorithm, see e.g., \cite[\S 7]{Meinardus}.%

\begin{figure}[tbh]
\centering
\includegraphics[
natheight=2.342900in,
natwidth=3.135700in,
height=2.93in,
width=3.975in
]%
{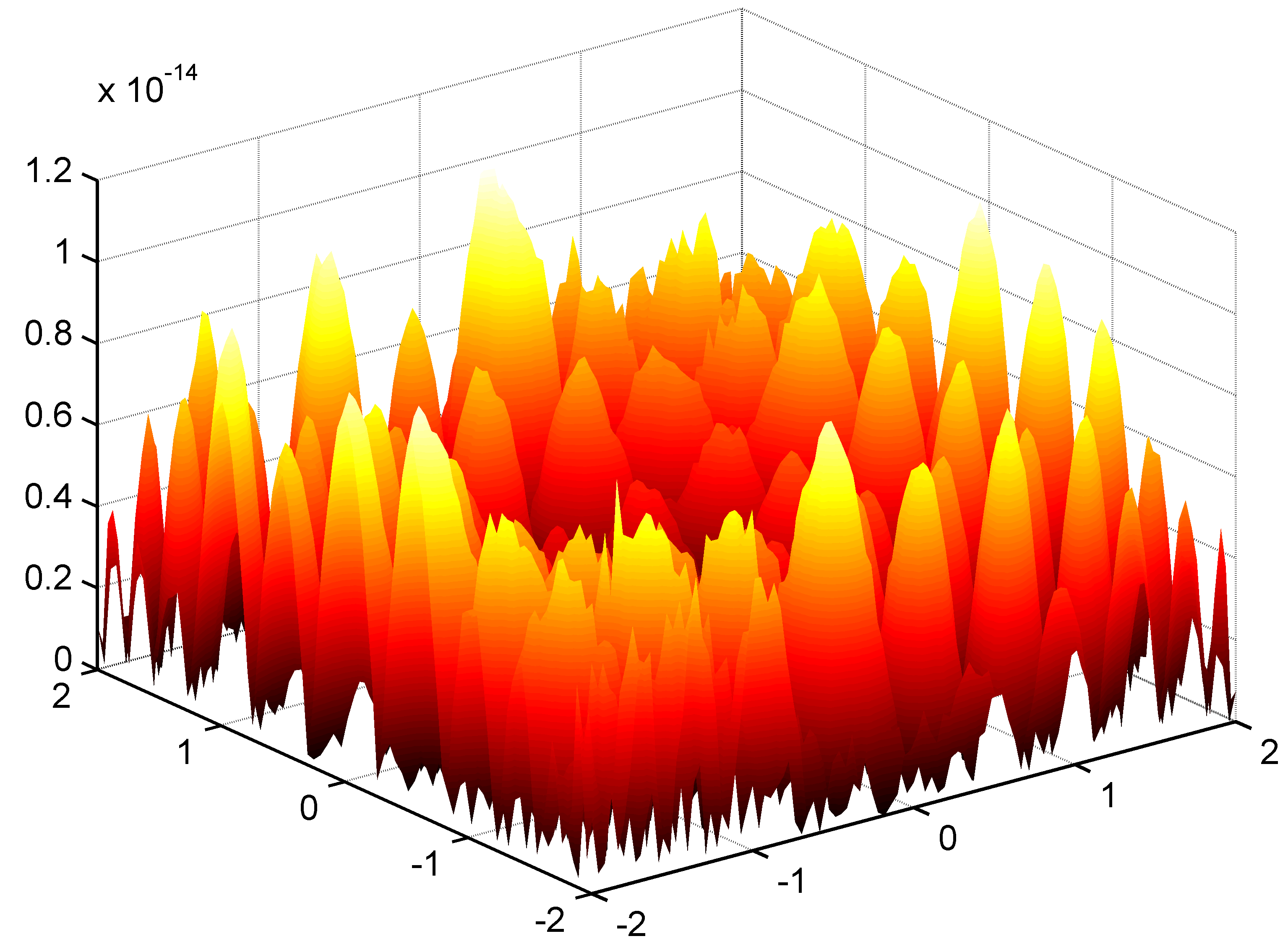}%
\caption{The approximation error of the integral kernel $K_{\cosh}$
in the domain $\overline{\mathbf{R}}$ given by
$b=2$ by the generalized wave polynomial of order $N=19$. The
generalized wave polynomial is obtained with the use of the Remez simple
exchange algorithm.}
\end{figure}

\begin{center}
\noindent%
\begin{tabular}
[c]{|c|c|}\hline
\multicolumn{2}{|c|}{$b=2$, $\mathbf{K}_{\sech}$ kernel}\\\hline
$N$ & Approx. error\\\hline
$5$ & $0.0045993$\\
$9$ & $9.3687\cdot10^{-6}$\\
$13$ & $4.1549\cdot10^{-9}$\\
$17$ & $3.3354\cdot10^{-10}$\\\hline
\end{tabular}
\qquad%
\begin{tabular}
[c]{|c|c|}\hline
\multicolumn{2}{|c|}{$b=4$, $\mathbf{K}_{\sech}$ kernel}\\\hline
$N$ & Approx. error\\\hline
$13$ & $7.4042\cdot10^{-5}$\\
$17$ & $2.342\cdot10^{-7}$\\
$21$ & $2.789\cdot10^{-10}$\\
$25$ & $4.9467\cdot10^{-11}$\\\hline
\end{tabular}
\qquad%
\begin{tabular}
[c]{|c|c|}\hline
\multicolumn{2}{|c|}{$b=2$, $\mathbf{K}_{\cosh}$ kernel}\\\hline
$N$ & Approx. error\\\hline
$5$ & $0.0052907$\\
$9$ & $1.2563\cdot10^{-5}$\\
$13$ & $6.6227\cdot10^{-9}$\\
$17$ & $1.1813\cdot10^{-12}$\\
$19$ & $1.0325\cdot10^{-14}$\\\hline
\end{tabular}
\end{center}
\end{example}

\section{Applications to Sturm-Liouville spectral problems}\label{SectASL}
Solution of Sturm-Liouville spectral problems is a straightforward application of the developed theory. In this section we briefly explain the convenience of the approximation \eqref{Kapprox} of the integral kernel in the simplest setting of Dirichlet boundary conditions. The separate paper \cite{KrT2013} contains further details.

Consider the spectral problem for the equation
\begin{equation}\label{EqSLproblem}
    -u''+qu=\omega^2 u
\end{equation}
on the interval $[0,b]$ with the boundary conditions
\begin{equation}\label{EqSLBC}
    u(0)=u(b)=0.
\end{equation}
Let $\mathbf{T}_h$ be a transmutation operator for \eqref{EqSLproblem} where we assume that the potential $q$ is extended onto $[-b,b]$. From \eqref{mapping IC} we have that the function $s(x;\omega):=\mathbb{T}_h(\sin\omega x)$ is a solution of \eqref{EqSLproblem} satisfying the first of the conditions \eqref{EqSLBC}. Thus, the characteristic equation of the problem \eqref{EqSLproblem}, \eqref{EqSLBC} has the form
\begin{equation*}
    \sin\omega b +\int_{-b}^{b}\mathbf{K}(b,t;h)\sin\omega t\,dt=0.
\end{equation*}

Let $f$ be a non-vanishing solution of the equation $-u''+qu=0$ on $[-b,b]$ normalized as $f(0)=1$ and $h:=f'(0)$. Consider an approximation $K_N(x,t)$ of the integral kernel $\mathbf{K}(x,t;h)$ of the form \eqref{Kapprox} and define $s_N(x;\omega):=\sin\omega x+\int_{-x}^x K_N(x,t)\sin\omega t\,dt$. By the definition of the generalized wave polynomials \eqref{um} and the parity relations \eqref{umParity} we have
\begin{equation}\label{sN}
    s(x;\omega) \cong s_{N}(x;\omega) = \sin \omega x + \sum_{n=1}^{N}b_{n}\sum_{\text{odd }k=1}^{n}\binom{n}{k}\varphi
_{n-k}(x)\int_{-x}^{x}t^{k}\sin \omega t\,dt.
\end{equation}
The integrals in \eqref{sN} can be easily evaluated explicitly, hence \eqref{sN} presents a convenient approximation of the solution $s(x;\omega)$. Moreover, the error of the approximation can be uniformly bounded for all real $\omega$. If $|\mathbf{K}(x,t;h)-K_N(x,t)|\le\varepsilon$ in $0\le|t|\le|x|\le b$ then
\begin{equation*}
    |s(x;\omega)-s_N(x;\omega)|\le\int_{-x}^{x}|\mathbf{K}(x,t;h)-K_N(x,t)|\cdot|\sin\omega t|\,dt\le \varepsilon \int_{-x}^{x}|\sin\omega t|\,dt\le 2\varepsilon x.
\end{equation*}

Thus, the proposed method for solving \eqref{EqSLproblem}, \eqref{EqSLBC} consists of the following steps: 1) compute a non-vanishing solution $f$ of $-u''+qu=0$; 2) compute $N+1$ functions $\varphi_k$; 3) find coefficients $b_n$, $n=1,\ldots,N$ of the approximation of the integral kernel $\mathbf{K}(x,t;h)$; 4) find zeros of $\left. s_N(x,\omega)\right|_{x=b}$.

\begin{example}
Consider the spectral problem \eqref{EqSLproblem}, \eqref{EqSLBC} with $q(x)=e^x$ and $b=\pi$ (see \cite[Appendix A]{Pryce}). The exact characteristic equation of the problem is given by $I_{2 i \omega }(2)I_{-2 i \omega }\big(2
\sqrt{e^\pi}\big)-I_{-2 i \omega }(2) I_{2 i
\omega }\big(2 \sqrt{e^\pi}\big)=0$, allowing one to calculate an arbitrary number of eigenvalues for comparison with numerical results.

We choose $f(x)=I_0\big(2e^{x/2}\big)/I_0(2)$ as a non-vanishing particular solution and performed approximation of the integral kernel by approximating its Goursat data as described in Section \ref{SectApproxKernel} using $N=30$. In the following table we present approximations of the first 1000 eigenvalues obtained by the proposed method together with corresponding absolute and relative errors. Observe that the absolute errors remain essentially of the same order and relative errors approach the machine precision bound. The computation time is within seconds.
\begin{center}
\noindent%
\begin{tabular}{|c|c|c|c|}\hline
$n$ & Obtained $\omega_n^2$ & Abs. error & Rel. error\\\hline
$1$ & 4.89666937996891 & $1.2\cdot10^{-12}$ & $2.5\cdot10^{-13}$\\
$2$ & 10.0451898932577 & $4.0\cdot10^{-12}$& $4.0\cdot10^{-13}$\\
$3$ & 16.0192672505157 & $2.3\cdot10^{-11}$& $1.5\cdot10^{-12}$\\
$5$ & 32.2637070458132 & $8.7\cdot10^{-12}$& $2.7\cdot10^{-13}$\\
$10$ & 107.116676138236 & $3.2\cdot10^{-11}$& $3.0\cdot10^{-13}$\\
$20$ & 407.065235267218 & $1.2\cdot10^{-10}$& $3.0\cdot10^{-13}$\\
$50$ & 2507.05043440902 &$1.2\cdot10^{-10}$& $4.9\cdot10^{-14}$\\
$100$ & 10007.0483099952 &$2.4\cdot10^{-11}$& $2.4\cdot10^{-15}$\\
$200$ & 40007.0477785361 &$3.6\cdot10^{-11}$& $9.1\cdot10^{-16}$\\
$500$ & 250007.047629702 &$1.2\cdot10^{-10}$& $4.7\cdot10^{-16}$\\
$1000$ & 1000007.04760844 &$2.3\cdot10^{-10}$& $2.3\cdot10^{-16}$\\
\hline
\end{tabular}
\end{center}
\end{example}

\affiliationone{
   Vladislav V. Kravchenko \\
   Department of Mathematics,\\
   CINVESTAV del IPN,\\
   Unidad Queretaro,\\
   Libramiento Norponiente No. 2000, \\
   Fracc. Real de Juriquilla, Queretaro,\\
   Qro. C.P. 76230 MEXICO
   \email{vkravchenko@math.cinvestav.edu.mx}
}%
\affiliationtwo{
   Sergii M. Torba\\
   Department of Mathematics,\\
   CINVESTAV del IPN,\\
   Unidad Queretaro,\\
   Libramiento Norponiente No. 2000, \\
   Fracc. Real de Juriquilla, Queretaro,\\
   Qro. C.P. 76230 MEXICO
   \email{storba@math.cinvestav.edu.mx}
}%

\begin{thebibliography}{99}                                                                                               %
\bibitem {BegehrGilbert}{\bibname H. Begehr \and R. Gilbert}, \emph{Transformations,
transmutations and kernel functions, vol. 1--2} (Longman Scientific \&
Technical, Harlow, 1992).

\bibitem {Berskniga}{\bibname L. Bers},  \emph{Theory of pseudo-analytic functions}
(New York University, 1952).

\bibitem {Boumenir2006}{\bibname A. Boumenir}, `The approximation of the
transmutation kernel', \emph{J. Math. Phys. }47 (2006) 013505.

\bibitem {CKr2012}{\bibname H. Campos \and V. V. Kravchenko}, `Fundamentals of
bicomplex pseudoanalytic function theory: Cauchy integral formulas, negative
formal powers and Schr\"{o}dinger equations with complex coefficients', \emph{Complex Anal. Oper. Theory} 7 (2013) 485--518.

\bibitem {CKM AACA}{\bibname H. M. Campos, V. V. Kravchenko \and L. M. Mendez}, `Complete
families of solutions for the Dirac equation: an application of bicomplex
pseudoanalytic function theory and transmutation operators', \emph{Adv. Appl. Clifford Algebras} 22 (2012) 577--594.

\bibitem {CKT}{\bibname H. Campos, V. V. Kravchenko \and S. M. Torba}, `Transmutations,
L-bases and complete families of solutions of the stationary Schr\"{o}dinger
equation in the plane', \emph{J. Math. Anal. Appl.} 389 (2012) 1222--1238.

\bibitem {Carroll}{\bibname R. W. Carroll}, \emph{Transmutation theory and
applications} (Mathematics Studies, Vol. 117, North-Holland, 1985).

\bibitem {CastaKr2005}{\bibname A. Casta\~{n}eda \and V. V. Kravchenko}, `New
applications of pseudoanalytic function theory to the Dirac equation', \emph{J. Phys. A: Math. Gen.} 38 (2005) 9207--9219.

\bibitem {CKOR}{\bibname R. Castillo, V. V. Kravchenko, H. Oviedo \and V. S. Rabinovich},
`Dispersion equation and eigenvalues for quantum wells using spectral
parameter power series', \emph{J. Math. Phys.} 52 (2011) 043522 (10 pp.).

\bibitem {Charal2002}{\bibname Ch. A. Charalambides}, \emph{Enumerative combinatorics}
(Chapman \& Hall/CRC, Boca Raton, 2002).

\bibitem {Comtet1974}{\bibname L. Comtet}, \emph{Advanced combinatorics} (D. Reidel,
Dordrecht, Holland, 1974).

\bibitem {Delsarte}{\bibname J. Delsarte}, `Sur certaines transformations
fonctionnelles relatives aux \'{e}quations lin\'{e}aires aux d\'{e}riv\'{e}es
partielles du second ordre', \emph{C. R. Acad. Sc.} 206 (1938) 178--182.

\bibitem {DelsarteLions1956}{\bibname J. Delsarte \and J. L. Lions},
`Transmutations d'op\'{e}rateurs diff\'{e}rentiels dans le domaine complexe',
\emph{Comment. Math. Helv.} 32 (1956) 113--128.

\bibitem {ErbeMertPeterson2012}{\bibname L. Erbe, R. Mert \and A. Peterson} `Spectral
parameter power series for Sturm--Liouville equations on time scales', \emph{Applied
Mathematics and Computation} 218 (2012) 7671--7678.

\bibitem {Fage}{\bibname M. K. Fage \and N. I. Nagnibida}, \emph{The problem of
equivalence of ordinary linear differential operators} (Novosibirsk: Nauka,
1987) (in Russian).

\bibitem {Kato}{\bibname T. Kato}, \emph{Perturbation theory for linear operators}
(Springer-Verlag, 1966).

\bibitem {KKRosu}{\bibname K. V. Khmelnytskaya, V. V. Kravchenko \and H. C. Rosu},
`Eigenvalue problems, spectral parameter power series, and modern applications',
Submitted, available at arXiv:1112.1633.

\bibitem {KKTT}{\bibname K. V. Khmelnytskaya, V. V. Kravchenko, S. M. Torba \and
S. Tremblay}, `Wave polynomials, transmutations and Cauchy's problem for the Klein-Gordon equation', \emph{J. Math. Anal. Appl.} 399 (2013)
191--212.

\bibitem {KiraRosu2010}{\bibname K. V. Khmelnytskaya \and H. C. Rosu}, `A new
series representation for Hill's discriminant', \emph{Annals of Physics} 325 (2010)
2512-2521.

\bibitem {KKT}{\bibname V. G. Kravchenko, V. V. Kravchenko \and S. Tremblay},
`Zakharov-Shabat system and hyperbolic pseudoanalytic function theory',
\emph{Math. Meth. Appl. Sci.} 33 (2010) 448--453.

\bibitem {Krpseudoan}{\bibname V. V. Kravchenko}, `On a relation of
pseudoanalytic function theory to the two-dimensional stationary
Schr\"{o}dinger equation and Taylor series in formal powers for its solutions',
\emph{J. Phys. A: Math. Gen.} 38 (2005) 3947--3964.

\bibitem {KrCV08}{\bibname V. V. Kravchenko}, `A representation for solutions of
the Sturm-Liouville equation', \emph{Complex Variables and Elliptic Equations} 53
(2008) 775--789.

\bibitem {APFT}{\bibname V. V. Kravchenko}, \emph{Applied pseudoanalytic function
theory} (Basel: Birkh\"{a}user, Series: Frontiers in Mathematics, 2009).

\bibitem {KrPorter2010}{\bibname V. V. Kravchenko \and R. M. Porter}, `Spectral
parameter power series for Sturm-Liouville problems', \emph{Math. Meth. Appl. Sci.} 33 (2010) 459--468.

\bibitem {KRT}{\bibname V. V. Kravchenko, D. Rochon \and S. Tremblay}, `On the
Klein-Gordon equation and hyperbolic pseudoanalytic function theory', \emph{J. Phys. A: Math. Gen.} 41 (2008) 065205, (18pp.).

\bibitem {KrT2012}{\bibname V. V. Kravchenko \and S. Torba}, `Transmutations for
Darboux transformed operators with applications', \emph{J. Phys. A: Math. Gen.} 45
(2012) 075201 (21 pp.).

\bibitem {KT Obzor}{\bibname V. V. Kravchenko \and S. M. Torba} `Transmutations and
spectral parameter power series in eigenvalue problems', \emph{Operator
Theory: Advances and Applications}  228 (2013) 209--238.

\bibitem {KT MMET 2012}{\bibname V. V. Kravchenko \and S. M. Torba}, `Spectral problems
in inhomogeneous media, spectral parameter power series and transmutation
operators', \emph{2012 International Conference on Mathematical Methods in Electromagnetic Theory (MMET-2012)}, IEEE Conference Publications, 18--22.

\bibitem{KrT2013}{\bibname V. V. Kravchenko \and S. M. Torba} `Analytic approximation of transmutation operators and applications to highly accurate solution of spectral problems', Submitted, available at arXiv:1306.2914.

\bibitem {KT2010JMAA}{\bibname V. V. Kravchenko \and S. Tremblay}, `Explicit solutions
of generalized Cauchy-Riemann systems using the transplant operator', \emph{J. Math. Anal. Appl.} 370 (2010) 242--257.

\bibitem {Lamb}{\bibname G. L. Lamb}, \emph{Elements of soliton theory} (John Wiley \&
Sons, New York, 1980).

\bibitem {Lavrentyev and Shabat}{\bibname M. A. Lavrentyev \and B. V. Shabat},
\emph{Hydrodynamics problems and their mathematical models} (Nauka, Moscow, 1977) (in
Russian).

\bibitem {LevitanInverse}{\bibname B. M. Levitan}, \emph{Inverse Sturm-Liouville
problems} (VSP, Zeist, 1987).

\bibitem {Lions57}{\bibname J. L. Lions}, `Solutions \'{e}l\'{e}mentaires de
certains op\'{e}rateurs diff\'{e}rentiels \`{a} coefficients variables', \emph{Journ.
de Math.} 36 (1957) 57--64.

\bibitem {Marchenko}{\bibname V. A. Marchenko}, \emph{Sturm-Liouville operators and
applications} (Basel: Birkh\"{a}user, 1986).

\bibitem {Meinardus}{\bibname G. Meinardus}, \emph{Approximation of Functions: Theory
and Numerical Methods} (New York: Springer, 1967).

\bibitem {MotterRosa}{\bibname A. F. Motter \and M. A. F. Rosa}, `Hyperbolic calculus',
\emph{Adv. Appl. Clifford Algebras} 8 (1998) 109--128.

\bibitem {PA1983}{\bibname N. A. Pakhareva \and I. N. Aleksandrovich},
`Representation of $p_{1}(x)p_{2}(y)$-wave functions by a linear
combination of wave functions and their derivatives' (Russian), \emph{Dokl. Akad.
Nauk Ukrain. SSR Ser. A} 1983 no. 1, 23--27.

\bibitem {PA1983 2}{\bibname N. A. Pakhareva \and I. N. Aleksandrovich}, `Integral
representation of p-wave functions with characteristic $p=e^{\alpha x}y^{k}$' (Russian), \emph{Vychisl. Prikl. Mat.(Kiev)} 49 (1983) 35--42.

\bibitem {Pryce}{\bibname J. D. Pryce} \emph{Numerical solution of Sturm-Liouville
problems} (Clarendon Press,  1993).

\bibitem {Rice}{\bibname J. R. Rice}, \emph{The Approximation of Functions, Vol. 1.
Linear theory} (Reading, Massachusetts: Addison-Wesley, 1964).

\bibitem {Rivlin}{\bibname T. J. Rivlin}, \emph{An introduction to the approximation of
functions} (Blaisdell: Waltham, Mass., 1969).

\bibitem {RochonTrembl}{\bibname D. Rochon \and S. Tremblay}, `Bicomplex quantum
mechanics: I. The generalized Schr\"{o}dinger equation', \emph{Advances in Applied
Clifford Algebras} 14 (2004) 231--248.

\bibitem {Sitnik}{\bibname S. M. Sitnik}, \emph{Transmutations and applications: a
survey}. arXiv:1012.3741v1, originally published in the book:
\emph{Advances in Modern Analysis and Mathematical
Modeling} (Editors: Yu.F.Korobeinik, A.G.Kusraev,
Vladikavkaz: Vladikavkaz Scientific Center of the Russian Academy of Sciences
and Republic of North Ossetia--Alania, 2008, 226--293).

\bibitem {Sobczyk}{\bibname G. Sobczyk}, `The hyperbolic number plane', \emph{Coll.
Maths. Jour.} 26 (1995) 268--280.

\bibitem {Trimeche}{\bibname K. Trimeche}, \emph{Transmutation operators and
mean-periodic functions associated with differential operators} (London:
Harwood Academic Publishers, 1988).

\bibitem {Vekua}{\bibname I. N. Vekua}, \emph{Generalized analytic functions} (Moscow:
Nauka, 1959 (in Russian); English translation Oxford: Pergamon Press 1962).

\bibitem {Vladimirov}{\bibname V. S. Vladimirov}, \emph{Equations of mathematical
physics} (Moscow: Nauka, 1984 (in Russian); Engl. transl.: of the first edition:
N.Y.: Marcel Dekker, 1971).

\bibitem {Wen}{\bibname G. C. Wen}, \emph{Linear and Quasilinear Complex Equations of
Hyperbolic and Mixed Type} (Taylor \& Francis London, 2003).
\end{thebibliography}
\end{document}